\documentclass[11pt, letterpaper]{article}
\usepackage{amsmath, amsthm, amssymb, amsfonts} % 数学符号和定理环境
\usepackage{mathrsfs} % 花体字母
\usepackage{bm} % 粗体数学符号
\usepackage{graphicx} % 插入图片
\usepackage{enumerate} % 自定义列表
\usepackage{geometry} % 页面设置
\usepackage{setspace} % 行距设置
\usepackage{lmodern} % 现代字体
\usepackage{hyperref} % 超链接
\usepackage{color} % 颜色
\usepackage{xcolor} % 更多颜色选项
\usepackage{url} % URL处理
\usepackage{mathtools}
\usepackage{enumitem}
\usepackage{changepage}
\usepackage{microtype}
\usepackage{authblk}
\usepackage{amsfonts}
\usepackage{mathrsfs,amscd,amssymb,amsthm,amsmath,bm,graphicx,psfrag,subfigure,url,mathtools}
\usepackage{pict2e}
\usepackage{psfrag,amsmath}
\usepackage{tikz}
\usepackage{indentfirst}
\usepackage{hyperref}
\usepackage{bookmark}
\usepackage{enumerate}
\usepackage{latexsym,euscript,epic,eepic,color}
\usepackage{multirow}
\usepackage{multicol}
\usepackage{longtable}
\usepackage{adjustbox}

\usepackage{setspace}
\usepackage{epstopdf}
\allowdisplaybreaks
\usepackage{authblk}
\usepackage{pifont}

\usepackage{enumitem}
\usepackage{microtype}

\theoremstyle{plain}

\newtheorem{theorem}{Theorem}[section]
\newtheorem{lemma}[theorem]{Lemma}
\newtheorem{proposition}[theorem]{Proposition}

\newtheorem{claim}{Claim}
\theoremstyle{definition}

\theoremstyle{remark}

\DeclareMathOperator{\spex}{spex}

\newcommand{\cE}{\mathcal E}
\newcommand{\cO}{\mathcal O}
\newcommand{\cD}{\mathcal D}

\newenvironment{wst}
{\setlength{\leftmargini}{1.5\parindent}
 \begin{itemize}
 \setlength{\itemsep}{-1.1mm}}
{\end{itemize}}
\hypersetup{
    colorlinks=true,
    linkcolor=blue,
    citecolor=red,
    urlcolor=magenta,
}

\newcommand{\keywords}[1]{%
  \par\vspace{6pt}\noindent\textbf{Keywords: }#1\par
}

\newcommand{\MSC}[2][2020]{%
  \par\vspace{3pt}\noindent\textbf{MSC(#1): }#2\par
}
\title{Spectral extremal graphs for $W_5$-free graphs with odd size}
\author[1]{Jing Gao}
\author[2]{Xianya Geng}
\author[,1]{Shuchao Li\thanks{Corresponding author. \\
\hspace*{2em}E-mail address: gjing1270@163.com (J. Gao), gengxianya@sina.com (X. Geng), lscmath@ccnu.edu.cn (S. Li)}}
\affil[1]{School of Mathematics and Statistics, and Hubei Key Lab--Math. Sci.,\linebreak Central China Normal University, Wuhan 430079, China}
\affil[2]{School of Mathematics and Big Data,\linebreak Anhui University of Science and Technology, Huainan 232001, China}
\date{}

\allowdisplaybreaks
\begin{document}
\baselineskip=0.23in
% 标题页

\maketitle

\begin{abstract}
For a fixed integer $k\ge 2$, let $W_{2k+1}=K_1\vee C_{2k}$ be an odd wheel graph. The fixed-size spectral extremal problem aims to determine
\[
  \operatorname{spex}(m,W_{2k+1}):=\max\{\rho(G): e(G)=m,\ G \text{ is } W_{2k+1}\text{-free}\},
\]
where $\rho(G)$ denotes the adjacency spectral radius. Based on this problem, Yu, Li, and Peng~\cite{YLP} proposed the following conjecture: When $m-\binom{k}{2}$ is divisible by $k$ and $m$ is large, every $W_{2k+1}$-free graph of size $m$ satisfies
\(
 \rho(G)^2-(k-1)\rho(G)\le m-\binom{k}{2}
\)
with equality precisely for $K_k\vee qK_1$. For nonzero residue class, Yu, Zhang, and Zhang~\cite{YZZ} proposed the following conjecture: Let $r$ be a nonzero remainder when $m-\binom{k}{2}$ is divided by $k$ and $m$ is large. Then $S_{k,m}$ is the unique graph among $W_{2k+1}$-free graphs of size $m$ having maximum spectral radius, where $S_{k,m}$ is obtained from $K_k\vee qK_1$ by adding a vertex $z$ and joining it to exactly $r$ vertices of the $K_k$. Very recently, Fang, Zhai and Zhang~\cite{FangZhaiZhang} confirmed the Yu--Li--Peng conjecture for $k\ge 2$. Chen, Gao and Li~\cite{cgl2026} confirmed Yu-Zhang-Zhang conjecture for $k\ge 3$.   When $k=2$, then $W_5=K_1\vee C_4$. For large odd $m$, determining $\operatorname{spex}(m,W_5)$ is still open. In this paper we address the odd-size problem.  Our result disproved Yu-Zhang-Zhang conjecture for $k= 2$.  In our proof, a universal defect bound shows that only $O(1)$ edges can lie outside the dense core.  Perron localization then reduces this to at most one edge.  A discrete defect inequality forces the complete bipartite crossing and quantizes the two matching deficiencies.  Exact quotient-polynomial comparisons eliminate the remaining cross-edge and odd--odd candidates.
\end{abstract}

\keywords{Dense core; Fixed size; Odd wheel; Spectral defect; Spectral radius}

\MSC{05C50, 05C35}

\section{Introduction}

All graphs are finite, simple and undirected. A graph $G$ is said to be $H$-free if $G$ does not contain $H$ as a (not necessarily induced) subgraph.
%Given a family of graphs $\mathcal{H}$, $G$ is $\mathcal{H}$-free if it is $H$-free for every $H\in\mathcal{H}$. 
For a graph \(G\), write \(A(G)\) for its adjacency matrix, \(\rho(G)\) for the spectral radius of \(A(G)\), and $e(G)$ the number of edges of $G$.

A central topic in extremal graph theory is the \textit{Tur\'an type problem}: Determine
\(
\operatorname{ex}(n,F)=\max\bigl\{e(G)\colon |V(G)|=n,\, G\text{ is }F\text{-free}\bigr\},
\)
the maximum number of edges in an $n$-vertex $F$-free graph. For a fixed graph $F$, put
\[
 \spex(m,F):=\max\{\rho(G):|E(G)|=m,\ G\text{ is }F\text{-free}\}.
\]
This is the fixed-size version of the spectral Tur\'an problem and goes
back to Brualdi and Hoffman~\cite{BrualdiHoffman}.  Exact fixed-size
results are known for several forbidden cycles, complete bipartite
graphs, fans, friendship graphs, theta graphs, and related color-critical
graphs; see, for example,
\cite{ZhaiLinShu,LiZhaiShu,LiZhaoZou,LiuLiLiYu,ZhangWang,
JoyentanujYamini}.
The leading-order structure is supplied by edge-spectral stability
\cite{LiLiuZhangStability}, while exact results require a discrete
second-order analysis.

An odd wheel is $W_{2k+1}=K_1\vee C_{2k}$.  Thus a graph is
$W_{2k+1}$-free precisely when the neighborhood of every vertex induces
no $C_{2k}$.  Fang, Zhai and Zhang~\cite{FangZhaiZhang} recently proved a
sharp fixed-size theorem for every odd wheel.  The exceptional case
$k=2$ reads as follows.

\begin{theorem}[Fang--Zhai--Zhang~\cite{FangZhaiZhang}]
\label{thm:FZZ}
There is an integer $m_0$ such that every $W_5$-free graph $G$ with
$m\ge m_0$ edges satisfies $\rho(G)\leq g^*(m),$ where $g^*(m)=\frac{1+\sqrt{4m+1}}{2}$ is the largest zero of $ x^2-x= m.$ Furthermore, $\rho(G)= g^*(m)$ if and only if
\[
 G\cong K_{n,n}+M_A+M_B,
 \qquad m=n^2+n,
\]
where $n$ is even and $M_A,M_B$ are perfect matchings in the two parts.
\end{theorem}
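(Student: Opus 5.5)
The plan is to follow the standard fixed-size spectral scheme: a spectral inequality obtained from the walk-count identity for $A^2$, followed by a structural analysis of the equality case. Let $\rho=\rho(G)$ and let $x$ be the Perron vector, normalized so that $x_{u^*}=\max_v x_v=1$. For every vertex $u$,
\[
 \rho^2x_u=\sum_{v\sim u}\sum_{w\sim v}x_w=d(u)x_u+\sum_{v\in N(u)}\sum_{w\in N(v)\setminus\{u\}}x_w .
\]
Since $G$ is $W_5$-free, the neighbourhood $G[N(u)]$ is $C_4$-free. I will split the double sum according to whether $w\in N(u)$ or not.

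\textbf{Step 1 (the main inequality).}
\begin{itemize}
\item Pairs with $w\in N(u)$ contribute at most $\sum_{v\in N(u)}d_{N(u)}(v)\,x_v$.
\item Pairs with $w\notin N(u)\cup\{u\}$ contribute at most $\sum_{vw\in E,\ v\in N(u),\ w\notin N[u]}x_w$.
\item Comparing with $\rho\sum_{v\in N(u)}x_v=\rho^2 x_u$ and using $x_w\le 1$, the target is
\[
 \rho^2-\rho\le e\bigl(N(u)\bigr)+e\bigl(N(u),V\setminus N[u]\bigr)+d(u)-\bigl(\text{correction}\bigr)\le m ,
\]
with $u=u^*$.
\end{itemize}
The term $-\rho$ must be extracted from the inside edges. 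The idea is that in a $C_4$-free graph on $N(u)$ the edges are sparse, $O(d^{3/2})$, but this is too weak on its own. What is needed is the sharper weighted statement
\[
 \sum_{v\in N(u)}d_{N(u)}(v)\,x_v\le \sum_{v\in N(u)}x_v+e\bigl(N(u)\bigr),
\]
which should hold because a $C_4$-free graph with maximum degree structure cannot have many vertices of inside-degree at least $2$ carrying large weight.

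\textbf{Step 2 (stability).} Before Step 1 can be closed, I expect to need the stability result of \cite{LiLiuZhangStability}. It says $G$ is $o(m)$-close to a complete bipartite graph $K_{n,n}$ with $n\approx\sqrt m$. Then $u^*$ lies in a dense core, and the weights $x_v$ are nearly equal on the core and small outside it. Using this, the deficiency terms $\sum(1-x_w)$ can be compared with the edges lost in the inside-degree sum. This comparison yields $\rho^2-\rho\le m$.

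\textbf{Step 3 (equality case).} Equality forces the following.
\begin{enumerate}
\item Every vertex other than $u^*$ has $x_w=1$ wherever it appears in the sums, so all vertices at distance at most $2$ have maximal weight. Hence $G$ is connected and every vertex is extremal.
\item Applying Step 1 at every vertex, each $N(u)$ induces a $C_4$-free graph in which every edge is counted tightly. This should force $G[N(u)]$ to be a perfect matching, since any path $P_3$ inside $N(u)$ loses weight.
\item Together with regularity ($x$ constant, so $G$ is $\rho$-regular), we get $e(N(u))=d/2$ and $d=\rho$, and then $\rho^2-\rho=m=nd/2$.
\end{enumerate}
The remaining task is to identify $G$ from the following data: $G$ is $\rho$-regular and every neighbourhood induces a perfect matching, so every edge lies in exactly one triangle. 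Stability then gives the structure $K_{n,n}$ plus matchings inside the parts. This yields $d=n+1$, $m=n(n+1)$, and $n$ even so that perfect matchings exist. Conversely, $K_{n,n}+M_A+M_B$ is $(n+1)$-regular with $m=n^2+n$, so $\rho=n+1=g^*(m)$. It is also $W_5$-free: a neighbourhood is $n$ vertices of the other side spanning a matching, plus one matched neighbour, and this contains no $C_4$.

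\textbf{Main obstacle.} The hard step is Step 1's weighted $C_4$-free inequality, that is, showing the inside-neighbourhood excess is at most $\rho$ up to lower-order terms that the Perron deficiencies absorb. I would first try to prove it via stability by bounding the number of vertices of inside-degree at least $2$ in $N(u^*)$. I would then handle the remainder by a discrete defect count: each extra inside edge costs a definite amount of $\sum(1-x_w)$, using the equation $\rho x_w=\sum_{y\sim w}x_y$ at the low-weight vertices.
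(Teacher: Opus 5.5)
The paper does not prove this statement; it is quoted from \cite{FangZhaiZhang}. I therefore judge your plan on its own terms. It has a genuine gap: the weighted inequality you single out as the crux of Step~1 is false, and it already fails on the extremal graphs.

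\textbf{The counterexample.} Take $G=K_{n,n}+M_A+M_B$ with $n\ge4$ even. It is $(n+1)$-regular, so $x\equiv1$ and $\rho=n+1$. Let $u^*\in A$ with $M_A$-partner $u_0$. Then $N(u^*)=B\cup\{u_0\}$ induces the windmill $K_1\vee\frac n2K_2$, which is $C_4$-free, as it must be. Hence $e(N(u^*))=\frac{3n}{2}$ and
\[
\sum_{v\in N(u^*)}d_{N(u^*)}(v)x_v=3n>\frac{5n}{2}+1=\sum_{v\in N(u^*)}x_v+e\bigl(N(u^*)\bigr).
\]
Equivalently, $d(u^*)+e(N(u^*))+e(N(u^*),V\setminus N[u^*])=n^2+\frac n2+1$. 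This falls short of $\rho^2-\rho=m=n^2+n$ by $\frac n2-1=\Theta(\sqrt m)$.

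\textbf{Why no repair along your lines works.} The excess of the inside-neighbourhood sum over $\rho$ is therefore not lower order. Since every $x_w=1$, there are no Perron deficiencies $\sum(1-x_w)$ available to absorb it. What absorbs it is exactly the $\frac n2-1$ edges of $M_A$ inside $W=V\setminus N[u^*]$, which your chain discards as slack in the final ``$\le m$''. The correct bookkeeping is the exact identity, with $U=N(u^*)$,
\[
(\rho^2-\rho-m)x^*=\sum_{uv\in E(G[U])}(x_u+x_v-x^*)-\rho x^*-\sum_{w\in W}\Bigl(d_U(w)(x^*-x_w)+\tfrac12d_W(w)x^*\Bigr),
\]
as in \eqref{eq:residual}. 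In the balanced regime both the $e(W)$ terms and the terms $d_U(w)(x^*-x_w)$, with $d_U(w)\approx\sqrt m$, must be tracked exactly. That is a global second-order computation, and no local weighted $C_4$-free inequality on $G[N(u^*)]$ can deliver it.

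\textbf{Step~3 inherits the error.} In the target graph, $G[N(u)]$ has odd order $n+1$ and is a windmill, not a perfect matching. Cross edges lie in two triangles and matching edges in $n$. So your equality analysis would exclude $K_{n,n}+M_A+M_B$ rather than characterise it.

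\textbf{Step~2 overstates what stability gives.} Lemma~\ref{lem2.4} yields $d(G,K_{A,B})=o(m)$ with no balance between $|A|$ and $|B|$. The unbalanced regime genuinely occurs: $B_m$ is $o(m)$-close to $K_{2,(m-1)/2}$ and has $\rho^2-\rho=m-1$, and there the Perron weights are far from equal. Stability alone also does not give the coordinate lower bounds you use implicitly; those come from first passing to an $\varepsilon$-core (Lemma~\ref{lem2.3}).

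\textbf{A working architecture.} It is visible in the paper's threshold-lowered version of the Fang--Zhai--Zhang machinery (Lemmas~\ref{lem:book-core} and \ref{lem:core-dichotomy}):
\begin{itemize}
\item core reduction;
\item stability, with $A,B$ chosen to minimise the distance to $K_{A,B}$;
\item the case $b\ge100a$, where the identity above at the maximum vertex is decisive;
\item the case $b<100a$, where one shows the crossing is almost complete and both parts span matchings, then compares globally with $R(a,b;p,q)$, whose radius solves $x^2(x-1)^2=(ax-2\alpha)(bx-2\eta)$ (Lemma~\ref{lem:R-equation}).
\end{itemize}
For $E_{a,b}$ this comparison gives exactly $m-(\rho^2-\rho)=\frac{(\sqrt a-\sqrt b)^2}{2}$. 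That is where the equality condition ($a=b=n$, with $n$ even) comes from.
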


The bound in Theorem~\ref{thm:FZZ} is attained only at the sparse
sequence $m=n^2+n$.  In particular, it leaves open the exact answer for
every odd $m$.  The odd case is arithmetically more varied than the
zero-defect equality case: three infinite families, as well as the book,
can be extremal.

For odd $m$, define
\[
 B_m:=K_2\vee\overline K_{(m-1)/2},
 \qquad
 \beta_m:=\rho(B_m)=\frac{1+\sqrt{4m-3}}2.
\]
Thus
\begin{equation}
 \beta_m^2-\beta_m=m-1.
\label{eq:book-defect}
\end{equation}
We next describe the other candidates.  If $a,b$ are even, let
$E_{a,b}$ be obtained from $K_{a,b}$ by inserting a perfect matching in
each part.  If $e$ is even and $o$ is odd, let $O_{e,o}$ be obtained from
$K_{e,o}$ by inserting a perfect matching in the $e$-part and a maximum
matching in the $o$-part.  Finally, for even $2\le a\le b$, let
$D_{a,b}$ be obtained from $E_{a,b}$ by deleting one edge of the perfect
matching in the $b$-part.  Put
\begin{align*}
 \cE_m&:=\{E_{a,b}:a\le b,\ a,b\text{ even},\
                 ab+(a+b)/2=m\},\\
 \cO_m&:=\{O_{e,o}:e\text{ even},\ o\text{ odd},\
                 eo+e/2+(o-1)/2=m\},\\
 \cD_m&:=\{D_{a,b}:a\le b,\ a,b\text{ even},\
                 ab+(a+b)/2-1=m\}.
\end{align*}
Equivalently, their parameters are the admissible factor pairs in
\begin{equation}
\begin{aligned}
 (2a+1)(2b+1)&=4m+1 &&(E_{a,b}\in\cE_m),\\
 (2e+1)(2o+1)&=4m+3 &&(O_{e,o}\in\cO_m),\\
 (2a+1)(2b+1)&=4m+5 &&(D_{a,b}\in\cD_m).
\end{aligned}
\label{eq:factorizations}
\end{equation}

When $n$ is even, write $E_n:=E_{n,n}$, and let $H_n$ be obtained from
$E_n$ by attaching a pendant edge to one vertex.  The following is our
main result.

\begin{theorem}
\label{thm:main}
There exists $m_0$ such that the following holds for every odd
$m\ge m_0$.

\begin{enumerate}[label=\textup{(\roman*)}]
 \item If $m=n^2+n+1$ for an even integer $n$, then
 \(
   \spex(m,W_5)=\rho(H_n),
 \)
 and $H_n$ is the unique extremal graph.

 \item Otherwise,
 \begin{equation}
 \spex(m,W_5)=
 \max\bigl(\{\beta_m\}\cup
 \{\rho(H):H\in\cE_m\cup\cO_m\cup\cD_m\}\bigr).
 \label{eq:main-maximum}
 \end{equation}
 The extremal graphs are exactly the members of
 \(
  \{B_m\}\cup\cE_m\cup\cO_m\cup\cD_m
 \)
 whose spectral radius equals the right-hand side of
 \eqref{eq:main-maximum}.
\end{enumerate}
\end{theorem}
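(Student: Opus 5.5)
We follow the standard fixed-size spectral scheme, using the abstract's sequence of steps. Let $G$ be an extremal $W_5$-free graph with odd $m\ge m_0$ edges, $\rho=\rho(G)$, and $x$ its Perron vector normalized so that $\max_v x_v=x_{u^*}=1$. Since $B_m$ is $W_5$-free, $\rho\ge\beta_m$, so by \eqref{eq:book-defect}
\[
\rho^2-\rho\ge m-1 .
\]
By Theorem~\ref{thm:FZZ}, $\rho^2-\rho<m$ unless $m=n^2+n$, which is even and hence excluded. So the \emph{spectral defect} $\delta:=m-(\rho^2-\rho)$ lies in $(0,1]$. This two-sided control drives the whole argument.

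\textbf{Step 1: a universal defect bound.} Start from
\[
\rho^2 x_{u^*}=d(u^*)+\sum_{w}d_{N(u^*)}(w)\,x_w ,
\]
and use that $N(v)$ is $C_4$-free for every $v$, so $e(N(v))\le\operatorname{ex}(d(v),C_4)$. Subtracting $\rho x_{u^*}$ gives $\rho^2-\rho\le m-\Phi$, where $\Phi$ is a nonnegative sum of weighted ``defect'' terms: edges not incident to $N(u^*)$, edges inside $N(u^*)$ beyond a matching structure, and vertices with small $x_w$. Combining with $\delta\le 1$ forces $\Phi\le1$. Together with the edge-spectral stability of \cite{LiLiuZhangStability}, this yields a dense core close to $K_{a,b}$ plus near-perfect matchings in each part. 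Only $O(1)$ edges and vertices lie outside this core. We also expect that either $a,b\to\infty$ or the core degenerates to a book.

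\textbf{Step 2: Perron localization.} Show that core vertices satisfy $x_v=1-O(1/\rho)$ while outside vertices have $x_v=O(1/\sqrt{\rho})$. Feeding these estimates back into $\Phi\le 1$ reduces the exterior to at most one edge. That edge is either a pendant edge, which gives $H_n$, or a single cross-type edge.

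\textbf{Step 3: the discrete defect inequality.} Write $G$ as the core bipartite graph between parts $A$ and $B$, with a missing-edge count $s$ for the crossing and matching deficiencies $t_A,t_B$ in each part. Derive an inequality of the form
\[
\delta\ \ge\ c_1 s+c_2(t_A+t_B)-o(1).
\]
With $\delta\le1$ this forces $s=0$, so the crossing is complete bipartite. It also restricts $t_A+t_B\le 1$ up to parity: a part of odd size necessarily misses one matching vertex. The surviving configurations are exactly $E_{a,b}$, $O_{e,o}$, $D_{a,b}$, the book $B_m$ (the degenerate case $a=2$ with $b$-side empty matching), $H_n$, the cross-edge variants, and odd--odd configurations. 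The edge count then yields the factorizations \eqref{eq:factorizations}.

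\textbf{Step 4: exact comparisons.} For each remaining candidate, use the equitable quotient matrix, of size at most $4\times4$, to get its characteristic polynomial. Evaluate that polynomial at $\beta_m$, or at a competitor's root, to compare candidates.
\begin{itemize}
\item \emph{Cross-edge and odd--odd candidates.} Show their quotient polynomial is positive at $\beta_m$ and increasing beyond it, so they lose to $B_m$.
\item \emph{Case $m=n^2+n+1$.} Here $H_n$ has $\rho^2-\rho=m-1+\Theta(1/n)$-type excess. Check that every alternative has strictly smaller spectral radius; no $E$, $O$, or $D$ family member fits this edge count with balanced parts.
\end{itemize}
Otherwise the maximum in \eqref{eq:main-maximum} is over what survives.

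The main obstacle is Step 3 together with Step 4's elimination. The defect differences there are of order $1/\rho$, so the first-order estimates must be sharpened to exact quotient computations. We would first try the approach of expanding $\rho^2-\rho-(m-1)$ as a series in $1/\sqrt m$ for each family and comparing the leading coefficients.
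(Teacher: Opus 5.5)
Your outline follows the paper's architecture: defect $\delta\in(0,1]$, bounded loss outside a dense core, a completion inequality, and exact quotient comparisons. However, two of its load-bearing steps fail as stated.

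\textbf{Steps 3--4 (elimination).} The defect inequality cannot force $s=0$. Its coefficient on $s$ is $1-o(1)$, so with $\delta\le1$ it only gives $s+t_A+t_B\le1$ (the paper's $c+s_A+s_B\le1$). The graph $C_{a,b}$, obtained from $E_{a,b}$ by deleting one cross edge, therefore survives.

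Your plan to discard $C_{a,b}$ and the odd--odd graph $T_{a,b}$ by showing they lose to $B_m$ is false in the balanced case. Take even $a\ge4$ and $m=a^2+a-1$. The quotient polynomial of $C_{a,a}$ is $x^3-ax^2-(a+1)x+a+2$. Reducing with $\beta_m^2-\beta_m=m-1$, its value at $\beta_m$ is $(a-2)\bigl((a+1)\beta_m-a(a+2)\bigr)$. This is negative because $\beta_m<a+1-\frac1{a+1}$, so $\rho(C_{a,a})>\beta_m$. Similarly, for odd $a$ one has $\rho(T_{a,a})=\frac12\bigl(a+1+\sqrt{(a+1)^2-4}\bigr)>\beta_m$. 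Both graphs have defect $1-\frac1{a+1}+O(a^{-2})<1$.

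These candidates must instead be beaten by same-size members of the families. The paper proves $\rho(C_{a,b})<\rho(D_{a,b})$ by reducing the $6\times6$ quotient polynomial of $C_{a,b}$ modulo $g_{a,b}$. It proves $\rho(T_{a,a})<\rho(E_{a-1,a+1})=1+\sqrt{a^2-1}$. Only $T_{a,b}$ with $b-a\ge4$ is beaten by $B_m$.

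Likewise, for $m=n^2+n+1$, members of $\cE_m\cup\cO_m\cup\cD_m$ do exist. What excludes them is a parity argument giving $ab\le n^2$, hence radius at most $n+1=\beta_m<\rho(H_n)$.

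\textbf{Step 1 (core extraction).} The inequality $\rho^2-\rho\le m-\Phi$, with $\Phi$ a nonnegative sum of the terms you list, is false. For $G=E_n$ one has $\rho^2-\rho=m$ exactly, so it would force $\Phi=0$. Yet $G[N(u^*)]$ is the star from the matching partner of $u^*$ to the opposite part plus that part's matching, which is $3n/2$ edges. In addition, $(n-2)/2$ edges avoid $N(u^*)$. In the double count at $u^*$, the edges of $G[N(u^*)]$ carry surplus $x_u+x_v-x_{u^*}\approx1$ on $\Theta(\sqrt m)$ edges. This cancels against $\rho$ and $e(W)$ only to leading order.

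So no sign-definite $\Phi$ exists in the balanced regime, and your claim that only $O(1)$ edges lie outside the core is unsupported. Your localisation $x_v=1-O(1/\rho)$ also presumes $b-a=O(1)$, whereas only $|\sqrt b-\sqrt a|=O(1)$ is available.

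The paper gets the $O(1)$ loss differently. Passing to an $\varepsilon$-core gains at least $\varepsilon t/(2m)$ in $\rho(\cdot)/\sqrt{e(\cdot)}$, where $t$ is the number of discarded edges. Theorem~\ref{thm:FZZ} leaves room of only $(g^*(m)-\beta_m)/\sqrt m=O(1/m)$, which bounds $t$.

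The neighbourhood double count is used only in the split regime $b\ge100a$, where one vertex dominates. There it forces $H\cong B_h$ with $\rho(H)=\beta(h)$, hence $G=B_m$. This is the book/balanced dichotomy that you only \emph{expect}. The reduction to at most one pendant edge then comes from $\max_v x_v=O(m^{-1/4})$ for the unit Perron vector, which yields $\gamma(G)\ge t-o(1)$.
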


%The formula is finite and completely effective: the relevant graphs are
%read off from the three factorizations in \eqref{eq:factorizations}, and
%their spectral radii are roots of polynomials of degree at most three. The
%exceptional pendant radius is the largest root of an explicit quintic;
%see Proposition~\ref{prop:candidate-radii}.

\noindent{\bf Our approach.}\ The proof has four steps.  First, we lower the dense-core threshold in
\cite{FangZhaiZhang} from the universal root
$\frac12(1+\sqrt{4m+1})$ to the book root $\beta_m$.  The gap between
the two normalized thresholds is only $O(m^{-1})$; hence only $O(1)$
edges are lost in the core extraction.  Second, a Perron-coordinate
estimate shows that at most one edge can remain outside the induced
core.  Third, a completion inequality for the \textit{defect}
\[
 \gamma(G):=e(G)-\bigl(\rho(G)^2-\rho(G)\bigr)
\]
forces the crossing graph to be complete and leaves only the three
matching-deficiency patterns in Theorem~\ref{thm:main}.  Finally, exact
quotient-polynomial comparisons remove a deleted cross edge and the
odd--odd parity pattern.  The pendant case is selected by a resolvent
estimate.\vspace{2mm}

\noindent{\bf Organization.}\ \ In the remainder of this section, we give some necessary notation and terminology.  In Section \ref{s2}, we give some preliminaries. In Section~\ref{s3}, we first give some key structure lemmas for characterization of the extremal graph. Then we give the proof of Theorem~\ref{thm:main}. Some concluding remarks are given in the last section.\vspace{2mm}

\noindent{\bf Some notation and definitions.}\ For a vertex $v\in V(G)$, $N_G(v)$ denotes its neighborhood and $d_G(v)=|N_G(v)|$ its degree;
when no confusion arises, we drop the subscript. For two graphs $H_1$ and $H_2$, $H_1\cup H_2$ denotes their vertex-disjoint union,
and $H_1\vee H_2$ denotes their \textit{join}, obtained from the disjoint union $H_1\cup H_2$
by adding all edges between $V(H_1)$ and $V(H_2)$. 

For a vertex subset $S\subseteq V(G)$, $G[S]$ denotes the subgraph of $G$ induced by $S$,
and we write $e(S):=e(G[S])$ for the number of edges inside $S$.
For two disjoint vertex subsets $S,T\subseteq V(G)$, $e(S,T)$ denotes the number of edges
with one endpoint in $S$ and the other in $T$, and for $v\in V(G)$ we write $d_R(v):=|N_G(v)\cap R|$.

The \textit{adjacency matrix} $A(G)$ of $G$ is the $n\times n$ $0$-$1$ matrix indexed by $V(G)$ with $A(G)_{uv}=1$ if and only if $uv\in E(G)$.
Since $A(G)$ is real and symmetric, its eigenvalues are real; we denote the largest of them by $\rho(G)$, called the \textit{spectral radius} of $G$.
When $G$ is connected, $A(G)$ is irreducible and nonnegative, so by the Perron-Frobenius theorem $\rho(G)$ is a simple eigenvalue admitting a positive eigenvector $\mathbf{x}=(x_v)_{v\in V(G)}$, unique up to positive scaling, called the \textit{Perron vector} of $G$.
\section{\normalsize Preliminaries}\label{s2}

\begin{lemma}[\cite{So1994}]\label{lem2.1}
Let $A$ and $B$ be real symmetric matrices of order $n,$ and let $1\leq i\leq n$ and $1\leq j\leq n.$ Then 
\begin{align}\label{eq2.1}
\lambda_i(A)+\lambda_j(B)\leq\lambda_{i+j-n}(A+B),\,\text{if $i+j\geq n+1$,} \\ \label{eq2.2}
\lambda_i(A)+\lambda_j(B)\geq\lambda_{i+j-1}(A+B),\,\text{if $i+j\leq n+1$.}
\end{align}
In either of these inequality equality holds if and only if there exists a nonzero vector of order $n$ that is an eigenvector to each of the three eigenvalues involved. 
\end{lemma}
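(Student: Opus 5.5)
The statement just given is Lemma~\ref{lem2.1}, the Weyl-type inequalities for sums of real symmetric matrices, with eigenvalues ordered $\lambda_1\ge\cdots\ge\lambda_n$. My plan is to prove both inequalities from the Courant--Fischer min-max characterization and a dimension count, and then read off the equality case from the same argument. For a real symmetric matrix $M$ with orthonormal eigenvectors $u_1,\dots,u_n$ (so $Mu_t=\lambda_t(M)u_t$), every unit vector $x$ in $\operatorname{span}\{u_t,\dots,u_n\}$ satisfies $x^{T}Mx\le\lambda_t(M)$. Likewise, every unit vector $x$ in $\operatorname{span}\{u_1,\dots,u_t\}$ satisfies $x^{T}Mx\ge\lambda_t(M)$.

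For \eqref{eq2.2}, assume $i+j\le n+1$ and put $C=A+B$. I take the three subspaces
\[
U=\operatorname{span}\{u_i(A),\dots,u_n(A)\},\quad V=\operatorname{span}\{u_j(B),\dots,u_n(B)\},\quad W=\operatorname{span}\{u_1(C),\dots,u_{i+j-1}(C)\}.
\]
Their dimensions are $n-i+1$, $n-j+1$ and $i+j-1$, which sum to $2n+1$. The dimension count gives $\dim(U\cap V\cap W)\ge(2n+1)-2n=1$, so there is a unit vector $x$ in all three. For this $x$,
\[
\lambda_{i+j-1}(C)\le x^{T}Cx=x^{T}Ax+x^{T}Bx\le\lambda_i(A)+\lambda_j(B).
\]
To get \eqref{eq2.1}, I apply \eqref{eq2.2} to $-A,-B$, using $\lambda_t(-M)=-\lambda_{n+1-t}(M)$. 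Setting $i'=n+1-i$ and $j'=n+1-j$, the condition $i+j\ge n+1$ becomes $i'+j'\le n+1$, and the index shifts give exactly $\lambda_i(A)+\lambda_j(B)\le\lambda_{i+j-n}(A+B)$.

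The equality case is where I expect the real work. Sufficiency is easy: if $x\neq0$ is an eigenvector of $A$, $B$ and $C$ for the three eigenvalues in question, then $Cx=Ax+Bx$ forces the eigenvalues to add. For necessity in \eqref{eq2.2}, equality forces both inequalities in the chain to be tight for the chosen $x$. Now $x^{T}Ax=\lambda_i(A)$ with $x\in U$ means that $x$ lies in the top eigenspace of $A|_U$, so $Ax=\lambda_i(A)x$; the same reasoning gives $Bx=\lambda_j(B)x$ and $Cx=\lambda_{i+j-1}(C)x$. The delicate point is that eigenvalues may be repeated, so this last step uses that the Rayleigh quotient attains its extreme value on a span of eigenvectors only at eigenvectors with that eigenvalue. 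I would check this by expanding $x$ in the chosen eigenbasis, which works whatever choice of bases is made. The equality case for \eqref{eq2.1} then follows by the same $-A,-B$ substitution.
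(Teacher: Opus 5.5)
Your proof is correct and complete. The paper itself gives no proof of this lemma: it quotes it from \cite{So1994}, and in the body it only ever uses the inequality \eqref{eq2.2}, with $i=1$ or $j=1$, to bound a second eigenvalue; the equality clause is never invoked.

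Each step of your argument checks out:
\begin{itemize}
\item The triple-intersection count $\dim(U\cap V\cap W)\ge 1$, combined with the Rayleigh bounds, gives \eqref{eq2.2}.
\item The reflection $\lambda_t(-M)=-\lambda_{n+1-t}(M)$, with $i'=n+1-i$ and $j'=n+1-j$, lands exactly on the index $n+1-(i'+j'-1)=i+j-n$ in \eqref{eq2.1}.
\item In the equality case, tightness of each Rayleigh bound for the chosen $x$ forces $Ax=\lambda_i(A)x$, $Bx=\lambda_j(B)x$ and $(A+B)x=\lambda_{i+j-1}(A+B)x$. Expanding $x$ in the eigenbasis spanning $U$, $V$ or $W$ confirms this, so repeated eigenvalues cause no trouble.
\end{itemize}

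This is the standard Courant--Fischer route to Weyl's inequalities and their equality case (in fact every nonzero vector of $U\cap V\cap W$ is then a common eigenvector), so there is nothing to add.
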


Let \( M \) be an \( n \times n \) real symmetric matrix, and let \( \pi: V = V_1 \cup \cdots \cup V_\ell \) be a partition of \( V = \{1, 2, \ldots, n\} \). Corresponding to the partition \( \pi \), \( M \) can be partitioned into the following block matrix:  
$$
M = \begin{pmatrix} M_{11} & \cdots & M_{1\ell} \\ \vdots & \ddots & \vdots \\ M_{\ell 1} & \cdots & M_{\ell\ell} \end{pmatrix}
$$
The \textit{quotient matrix} of \( M \) with respect to \( \pi \) is the \( \ell \times \ell \) matrix \( B = (b_{ij}) \), where \( b_{ij} \) denotes the average row sum of the block \( M_{ij} \). The partition \( \pi \) is called \textit{equitable} if, for all \( i, j \in [\ell] \), each block \( M_{ij} \) has constant row sums.
\begin{lemma}[\cite{G1993}]\label{lem2.05}
Let $M$ be a real symmetric matrix and let $B$ be a quotient matrix of $M$ with respect to an equitable partition. Then the eigenvalues of $B$ are also the eigenvalues of $M.$ Furthermore, if $M$ is nonnegative and irreducible, then $\lambda_1(M)=\lambda_1(B).$ 
\end{lemma}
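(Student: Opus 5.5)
We take the standard route through the characteristic matrix of the partition. Let $P$ be the $n\times \ell$ matrix whose $j$-th column is the indicator vector of $V_j$, i.e.\ $P_{vj}=1$ if $v\in V_j$ and $0$ otherwise. The columns of $P$ have disjoint nonempty supports, so $P$ has full column rank $\ell$. The block $M_{ij}$ has constant row sum $b_{ij}$ by equitability. Hence for $v\in V_i$ we get $(MP)_{vj}=\sum_{u\in V_j}M_{vu}=b_{ij}=(PB)_{vj}$, which gives the key identity
\[
 MP=PB .
\]

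For the first assertion, let $By=\lambda y$ with $y\neq 0$. Then
\[
 M(Py)=PBy=\lambda Py,
\]
and $Py\neq 0$ because $P$ has full column rank. So every eigenvalue of $B$ is an eigenvalue of $M$. If multiplicities are wanted, note that the column space of $P$ is $M$-invariant and the restriction of $M$ to it is represented by $B$. So the characteristic polynomial of $B$ divides that of $M$.

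For the second assertion, we first check that $B$ has real spectrum. Put $D=P^{\top}P=\mathrm{diag}(|V_1|,\dots,|V_\ell|)$. From $P^{\top}MP=P^{\top}PB=DB$ and the symmetry of $P^{\top}MP$, the matrix $D^{1/2}BD^{-1/2}=D^{-1/2}(P^{\top}MP)D^{-1/2}$ is symmetric. Thus $B$ is similar to a real symmetric matrix, so all its eigenvalues are real. Since $M\ge 0$, also $B\ge 0$, and Perron--Frobenius for nonnegative matrices gives $\lambda_1(B)=\rho(B)$. Moreover, there is a nonzero $y\ge 0$ with $By=\rho(B)y$.

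By the first part, $x:=Py$ is a nonzero nonnegative eigenvector of $M$ with eigenvalue $\lambda_1(B)$. Now we use the irreducibility of $M$. For an irreducible nonnegative matrix, any nonnegative eigenvector belongs to the Perron root. To see this, let $z>0$ be the Perron vector of $M$, which is symmetric. Then
\[
 \rho(M)\,z^{\top}x=z^{\top}Mx=\lambda_1(B)\,z^{\top}x ,
\]
and $z^{\top}x>0$ since $z>0$ and $x\ge 0$, $x\neq0$. Hence $\lambda_1(B)=\rho(M)=\lambda_1(M)$.

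The only point needing care is this last step. It deliberately avoids proving that $B$ is itself irreducible: it uses only the existence of a nonnegative Perron eigenvector for $B$ and positivity of the Perron vector of $M$, so no separate argument about the quotient being connected is needed.
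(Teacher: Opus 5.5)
Your argument is correct: $MP=PB$ lifts eigenvectors of $B$ to eigenvectors of $M$. The similarity $D^{1/2}BD^{-1/2}=D^{-1/2}P^{\top}MPD^{-1/2}$ gives $\lambda_1(B)=\rho(B)$, and pairing the lifted nonnegative eigenvector $Py$ with the positive Perron vector of $M$ forces $\lambda_1(B)=\lambda_1(M)$. The paper gives no proof and simply cites Godsil~\cite{G1993}; your characteristic-matrix argument is the standard proof behind that citation.
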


\begin{lemma}[\cite{YW2015}]\label{lem2.06}
Let $A, \hat{A} \in \mathbb{R}^{n \times n}$ be symmetric, with eigenvalues $\lambda_1 \geqslant \cdots \geqslant \lambda_n$ and $\hat{\lambda}_1 \geqslant \cdots \geqslant \hat{\lambda}_n$, respectively. Fix $j \in \{1, \ldots, n\}$, and assume that $\min\{\lambda_{j-1} - \lambda_j, \lambda_j - \lambda_{j+1}\} > 0$, where we define $\lambda_0 = \infty$ and $\lambda_{n+1} = -\infty$. If $v, \hat{v} \in \mathbb{R}^n$ satisfy $A v = \lambda_j v$ and $\hat{A} \hat{v} = \hat{\lambda}_j \hat{v}$ and $\hat{v}^\top v \geqslant 0$, then
$$
\| \hat{v} - v \| \leq \frac{2^{3/2} \rho(\hat{A} - A )}{\min(\lambda_{j-1} - \lambda_j, \lambda_j - \lambda_{j+1})}.$$
\end{lemma}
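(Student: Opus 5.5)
We will treat $v$ and $\hat v$ as unit vectors, which is implicit in the statement: otherwise the left-hand side is not scale invariant. We write $E:=\hat A-A$, so that $\rho(E)=\|E\|_2$ because $E$ is symmetric. Set $\delta:=\min(\lambda_{j-1}-\lambda_j,\lambda_j-\lambda_{j+1})>0$. The plan is a Davis--Kahan $\sin\theta$ argument followed by an elementary conversion from $\sin\theta$ to $\|\hat v-v\|$.

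\emph{Step 1 (reduction to $\sin\theta$).} Let $P=I-vv^\top$ be the orthogonal projector onto $v^\perp$. Put $\cos\theta=\hat v^\top v\ge 0$, so that $\sin\theta=\|P\hat v\|$. Then
\[
\|\hat v-v\|^2=2-2\cos\theta .
\]
Since $0\le\cos\theta\le1$, we have $1-\cos\theta\le(1-\cos\theta)(1+\cos\theta)=\sin^2\theta$. Hence $\|\hat v-v\|\le\sqrt2\,\sin\theta$. It therefore suffices to prove $\sin\theta\le 2\|E\|/\delta$.

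\emph{Step 2 (the $\sin\theta$ bound).} If $\|E\|\ge\delta/2$, the claim is trivial because $\sin\theta\le1\le 2\|E\|/\delta$.

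Otherwise $\|E\|<\delta/2$. By Weyl's inequality (the case of Lemma~\ref{lem2.1} with one index equal to $1$ or $n$), $|\hat\lambda_j-\lambda_j|\le\|E\|$. Hence for every $i\ne j$,
\[
|\lambda_i-\hat\lambda_j|\ge\delta-\|E\|>\delta/2 .
\]
This uses that the $\lambda_i$ are ordered, so $\lambda_{j\pm1}$ are the nearest competitors.

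Now $A$ commutes with $P$, and $A\hat v-\hat\lambda_j\hat v=-E\hat v$. Applying $P$ gives
\[
(A-\hat\lambda_j I)P\hat v=-PE\hat v .
\]
The restriction of $A-\hat\lambda_j I$ to $v^\perp$ is diagonal in the remaining eigenbasis of $A$, with entries $\lambda_i-\hat\lambda_j$ for $i\ne j$. All of these have modulus at least $\delta/2$, so the left-hand side has norm at least $(\delta/2)\|P\hat v\|=(\delta/2)\sin\theta$. The right-hand side has norm at most $\|E\|\,\|\hat v\|=\|E\|$. Therefore $\sin\theta\le 2\|E\|/\delta$.

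\emph{Step 3 (conclusion).} Combining the two steps gives
\[
\|\hat v-v\|\le\sqrt2\cdot 2\|E\|/\delta=2^{3/2}\rho(\hat A-A)/\delta .
\]

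The only delicate point is the lower bound on $A-\hat\lambda_j I$ restricted to $v^\perp$. It needs two facts at once: Weyl's inequality to keep $\hat\lambda_j$ close to $\lambda_j$, and simplicity of $\lambda_j$ (guaranteed by $\delta>0$) so that $v^\perp$ is exactly the span of the other eigenvectors. The case split at $\|E\|=\delta/2$ is precisely what absorbs the loss $\delta-\|E\|$ and produces the constant $2$.
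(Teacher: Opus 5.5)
Your proof is correct. Both of your interpretive choices are forced. The printed statement omits that $v,\hat v$ are unit vectors, and without that it fails (e.g.\ $\hat A=A$, $\hat v=2v$). Likewise, $\rho(\hat A-A)$ must mean $\|\hat A-A\|_2$ rather than the largest eigenvalue: for $\hat A=A-I$ the latter reading makes the right-hand side negative.

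The paper gives no proof of its own and simply cites Yu--Wang--Samworth, whose argument differs from yours in Step~2. They shift by the \emph{unperturbed} eigenvalue: $(A-\lambda_jI)\hat v=(\hat\lambda_j-\lambda_j)\hat v-E\hat v$. Weyl then gives $\|(A-\lambda_jI)\hat v\|\le2\|E\|$. Expanding $\hat v$ in an orthonormal eigenbasis of $A$ gives $\|(A-\lambda_jI)\hat v\|\ge\delta\|P\hat v\|=\delta\sin\theta$, because $|\lambda_i-\lambda_j|\ge\delta$ for $i\ne j$. In their route the factor $2$ comes from the triangle inequality. There is no case split and no need to show that $\hat\lambda_j$ stays away from the other $\lambda_i$.

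Your shift by $\hat\lambda_j$ makes you work with the perturbed gap $\delta-\|E\|$, hence the split at $\|E\|=\delta/2$. In exchange you get the sharper Davis--Kahan-type estimate $\sin\theta\le\|E\|/(\delta-\|E\|)$ for $\|E\|<\delta/2$. The unperturbed-shift route, for its part, extends verbatim to blocks of eigenvectors and to Frobenius norms (with Hoffman--Wielandt replacing Weyl), which is what the cited variant is built for. Step~1, the conversion $\|\hat v-v\|\le\sqrt2\sin\theta$ from $\hat v^\top v\ge0$, is the same in both.
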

%\section{Candidate graphs and their spectra}

%We begin with a common notation.  
Let $R(a,b;p,q)$ be the graph obtained
from $K_{a,b}$ by placing a matching of size $p$ in the $a$-part and a
matching of size $q$ in the $b$-part, where
$0\le p\le\lfloor a/2\rfloor$ and
$0\le q\le\lfloor b/2\rfloor$.  Put
\begin{equation}
 \alpha:=\frac a2-p,
 \qquad
 \eta:=\frac b2-q.
\label{eq:alpha-eta}
\end{equation}
The quantities $\alpha$ and $\eta$ are nonnegative half-integers.

\begin{lemma}
\label{lem:R-equation}
For $ab\ge 2,$ every graph $R(a,b;p,q)$ is $W_5$-free. Its spectral radius is the positive root of
\begin{equation}
 x^2(x-1)^2=(ax-2\alpha)(bx-2\eta).
\label{eq:R-equation}
\end{equation}
Moreover, the graph $R(a,b;p,q)$ has only one eigenvalue greater than $1$.
\end{lemma}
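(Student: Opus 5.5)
The proof splits into three parts: $W_5$-freeness, the characteristic equation, and the count of eigenvalues exceeding $1$. Write $V=A\cup B$ with $|A|=a$, $|B|=b$.

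\emph{$W_5$-freeness.} A copy of $W_5$ needs a center $v$ whose neighborhood contains a $C_4$. Take $v\in A$. Its neighborhood is all of $B$ together with at most one matched partner $v'\in A$. Now $G[B]$ is a matching, so every cycle in $G[N(v)]$ must use $v'$. The vertex $v'$ is adjacent to all of $B$, but $G[B]$ has maximum degree at most $1$. A $C_4$ in $N(v)$ would therefore have to be $v'$--$y$--$z$--$w$--$v'$ with $y,z,w\in B$ and $yz,zw\in E$, which gives $z$ degree $2$ inside $B$. This is impossible. The case $v\in B$ is symmetric. The hypothesis $ab\ge 2$ only rules out degenerate tiny cases; the argument above does not use it.

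\emph{Characteristic equation.} The graph is connected when $a,b\ge1$. Let $x$ be the Perron vector and $\rho$ the spectral radius. For a matched pair $u,u'\in A$, the eigen-equations read $\rho x_u = x_{u'} + S_B$ and $\rho x_{u'} = x_u + S_B$, where $S_B=\sum_{w\in B}x_w$. Subtracting gives $(\rho+1)(x_u-x_{u'})=0$, so the two entries are equal and $(\rho-1)x_u=S_B$. An unmatched vertex $u\in A$ satisfies $\rho x_u=S_B$. Hence
\[
 S_A=S_B\Bigl(\frac{2p}{\rho-1}+\frac{a-2p}{\rho}\Bigr)=S_B\,\frac{a\rho-(a-2p)}{\rho(\rho-1)}=S_B\,\frac{a\rho-2\alpha}{\rho(\rho-1)}.
\]
Symmetrically, $S_B=S_A\,(b\rho-2\eta)/(\rho(\rho-1))$. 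Multiplying the two relations and using $S_A,S_B>0$ yields \eqref{eq:R-equation}.

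An alternative is Lemma~\ref{lem2.05} with the equitable partition into matched and unmatched vertices on each side, which gives a $4\times4$ quotient matrix.

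\emph{Counting eigenvalues greater than $1$.} This is the main step. Any eigenvector $y$ with eigenvalue $\lambda\ne -1$ is constant on each matched pair, by the same subtraction. There are two types of eigenvector.

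\textbf{Eigenvectors with $S_A=S_B=0$.} Then each unmatched coordinate satisfies $\lambda y_u=0$, and each matched pair satisfies $(\lambda-1)y_u=0$. So $\lambda\in\{0,1,-1\}$, and none of these exceeds $1$.

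\textbf{Eigenvectors with $(S_A,S_B)\ne 0$.} The vector is then determined by $(S_A,S_B)$, as in the computation above. The resulting eigenvalues $\lambda\notin\{0,\pm1\}$ satisfy \eqref{eq:R-equation}, with $S_A,S_B$ scaled consistently.

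It remains to show the polynomial $f(x)=x^2(x-1)^2-(ax-2\alpha)(bx-2\eta)$ has exactly one root in $(1,\infty)$. The eigenvalues of the quotient matrix are genuine eigenvalues of the adjacency matrix. Each root of $f$ greater than $1$ gives a $(S_A,S_B)$-type eigenvector, and distinct roots give linearly independent ones. These all lie in the span of the four characteristic vectors of the partition classes, an invariant subspace on which the adjacency matrix acts by the $4\times4$ quotient matrix.

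The cleanest route is interlacing. Removing the edges inside the parts changes the matrix by $M_A\oplus M_B$, where $M_A,M_B$ are the adjacency matrices of the two matchings. This perturbation has largest eigenvalue $1$ (or $0$ if there are no matching edges). By Lemma~\ref{lem2.1} with $i=2$ and $j=1$, using $\lambda_2(K_{a,b})=0$, we get
\[
 \lambda_2\bigl(R(a,b;p,q)\bigr)\le \lambda_2(K_{a,b})+\lambda_1(M_A\oplus M_B)\le 1.
\]
So at most one eigenvalue exceeds $1$. At least one does, since $\rho\ge \sqrt{ab}>1$ when $ab\ge 2$. This is exactly where $ab\ge2$ is needed.

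The only thing left to check is that the interlacing inequality is applied in the correct direction with the paper's indexing convention.
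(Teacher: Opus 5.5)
Your proposal is correct and follows essentially the same route as the paper: the same neighborhood argument for $W_5$-freeness, the same $S_A$/$S_B$ eigen-equation computation giving \eqref{eq:R-equation}, and the same Weyl-type bound $\lambda_2(R(a,b;p,q))\le\lambda_2(K_{a,b})+1=1$ from Lemma~\ref{lem2.1}. The indexing direction you flagged is fine: with eigenvalues in decreasing order, $i=2$, $j=1$ gives $i+j=3\le n+1$, so \eqref{eq2.2} applies exactly as you used it. Your extra observation that every root of the quartic above $1$ yields an eigenvector is what pins $\rho$ down as the unique root in $(1,\infty)$. The quartic can have other positive roots, which the paper's phrase ``the positive root'' glosses over.
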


\begin{proof}
Let $V(R(a,b;p,q))=A\cup B$ be a bipartition with $|A|=a,\,|B|=b$ and all vertices in $A$ are adjacent to all vertices in $B$. Then the neighborhood of a vertex in one part consists of the opposite part, which induces a matching, together with at most one matching partner of the original vertex. Such a graph contains no $C_4$, so
$R(a,b;p,q)$ is $W_5$-free.

Denote by $\rho(R(a,b;p,q))=\rho.$ Let ${\bf x}$ be the nonnegative unit eigenvector for $\rho$, and let $S_A$, $S_B$ be the sums of the coordinates of ${\bf x}$ corresponding to the vertices in $A$ and $B$, respectively. Then for a vertex $v\in A$ with $d_A(v)=1,$ one has $\rho x_v=x_v+S_B$, and so $x_v=S_B/(\rho-1)$; for a vertex $u\in A$ with $d_A(u)=0,$ one has $\rho x_u=S_B$, and so $x_u=S_B/\rho$. Hence
\begin{align}\label{eq2.3}
 S_A=2p\cdot \frac{S_B}{(\rho-1)}+2\alpha\cdot \frac{S_B}{\rho}=\frac{a\rho-2\alpha}{\rho(\rho-1)}S_B.
\end{align}
The symmetric identity for $S_B$ gives 
$
 S_B=\frac{b\rho-2\eta}{\rho(\rho-1)}S_A.
$
Together this with \eqref{eq2.3} gives 
$
\rho^2(\rho-1)^2=(a\rho-2\alpha)(b\rho-2\eta).
$
Hence, $\rho$ is the positive root of \eqref{eq:R-equation}.  

Finally,
$R(a,b;p,q)$ is obtained from $K_{a,b}$ by adding two matchings, whose
union has the spectral radius at most $1$. Hence $\rho \ge \rho(K_{a,b})=\sqrt{ab}>1$, and Lemma~\ref{lem2.1} gives
$$\lambda_2(R(a,b;p,q))\le \lambda_2(K_{a,b})+1=1.$$  
Therefore, $\rho$ is the only eigenvalue of $R(a,b;p,q)$ greater than $1$.
\end{proof}

\begin{proposition}
\label{prop:candidate-radii}
The spectral radii of the graphs presented in Theorem~\ref{thm:main} are as follows.
\begin{enumerate}[label=\textup{(\roman*)}]
 \item $\rho(E_{a,b})=1+\sqrt{ab}$.
 \item $\rho(O_{e,o})$ is the largest root of
 \begin{equation}
  f_{e,o}(x):=x^3-2x^2+(1-eo)x+e.
  \label{eq:O-cubic}
 \end{equation}
 \item $\rho(D_{a,b})$ is the largest root of
 \begin{equation}
  g_{a,b}(x):=x^3-2x^2+(1-ab)x+2a.
  \label{eq:D-cubic}
 \end{equation}
 \item $\rho(H_n)$ is the largest root of
 \begin{equation}
 \begin{split}
  h_n(x):={}&x^5-2x^4-(n^2+1)x^3+4x^2+(2n^2-n-2)x-n.
 \end{split}
 \label{eq:P-quintic}
 \end{equation}
\end{enumerate}
\end{proposition}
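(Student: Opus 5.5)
All four parts come from the same argument. We take an equitable partition of the graph, form the quotient matrix, and apply Lemma~\ref{lem2.05}: the quotient is nonnegative and irreducible because each graph is connected, so its largest eigenvalue equals $\rho$. It then remains to show that the characteristic polynomial of the quotient, possibly after removing a factor that cannot contribute the largest root, is the stated polynomial. For (i)--(iii) we can also read the answer from Lemma~\ref{lem:R-equation}, using the parameters in \eqref{eq:alpha-eta}.

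For (i), $E_{a,b}=R(a,b;a/2,b/2)$, so $\alpha=\eta=0$. Then \eqref{eq:R-equation} becomes $x^2(x-1)^2=abx^2$. Its positive roots satisfy $x-1=\pm\sqrt{ab}$, and the only one exceeding $1$ (and hence the only candidate for $\rho$) is $1+\sqrt{ab}$.

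For (ii), $O_{e,o}$ has $\alpha=0$ and $\eta=1/2$. Equation \eqref{eq:R-equation} reads $x^2(x-1)^2=ex(ox-1)$, and dividing by $x>0$ gives exactly $f_{e,o}(x)=0$. For (iii), $D_{a,b}$ has $\alpha=0$ and $\eta=1$, so dividing $x^2(x-1)^2=ax(bx-2)$ by $x$ gives $g_{a,b}(x)=0$.

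To see that $\rho$ is the \emph{largest} root of these cubics, rather than merely some positive root, we use an equitable partition. For $O_{e,o}$ we take the classes: the $e$-part; the $o-1$ matched vertices of the $o$-part; and the unmatched vertex $z$. The quotient matrix is
\[
\begin{pmatrix}1&o-1&1\\ e&1&0\\ e&0&0\end{pmatrix}.
\]
Expanding $\det(xI-M)$ should give $f_{e,o}$. By Lemma~\ref{lem2.05}, the largest root of $f_{e,o}$ is then $\rho(O_{e,o})$. The same works for $D_{a,b}$ with the classes: the $a$-part; the matched vertices of the $b$-part; and the two unmatched vertices of the $b$-part. This gives the $3\times3$ matrix with rows $(1,b-2,2)$, $(a,1,0)$, $(a,0,0)$, whose characteristic polynomial should be $g_{a,b}$.

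For (iv), let $A,B$ be the two parts of $E_n$. Let $u\in A$ carry the pendant edge $uw$, and let $u'$ be the matching partner of $u$. We use the partition
\[
\{w\},\ \{u\},\ \{u'\},\ A\setminus\{u,u'\},\ B.
\]
It is equitable, with the following neighbour counts:
\begin{itemize}
\item $w$: one neighbour in $\{u\}$;
\item $u$: one in $\{w\}$, one in $\{u'\}$, and $n$ in $B$;
\item $u'$: one in $\{u\}$ and $n$ in $B$;
\item a vertex of $A\setminus\{u,u'\}$: one in its own class and $n$ in $B$;
\item a vertex of $B$: one in $\{u\}$, one in $\{u'\}$, $n-2$ in $A\setminus\{u,u'\}$, and one in $B$.
\end{itemize}
The graph is connected, so Lemma~\ref{lem2.05} identifies $\rho(H_n)$ with the largest eigenvalue of this $5\times 5$ quotient matrix. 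It remains to expand its characteristic polynomial and check that it equals $h_n$.

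The main obstacle is this $5\times5$ determinant. I would simplify it by first eliminating the rows of $w$ and $u'$: use $x\,x_w=x_u$, and note that $u'$ and the class $A\setminus\{u,u'\}$ have related equations. This reduces the computation to a $3\times3$ determinant with rational entries, which we then clear of denominators. The $3\times3$ determinants in (ii) and (iii) are routine, and they also serve as a consistency check against the Lemma~\ref{lem:R-equation} computation.
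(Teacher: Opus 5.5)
Your proposal is correct and is essentially the paper's proof. Parts (i)--(iii) are read off from Lemma~\ref{lem:R-equation} with $(\alpha,\eta)=(0,0),(0,1/2),(0,1)$, and in (iv) your five-class partition produces exactly the paper's matrix $Q_n$, whose characteristic polynomial does equal $h_n$: expanding along the row of $w$ gives $x(x^2-1)\bigl((x-1)^2-n^2\bigr)-\bigl(x^3-2x^2+(1+n-n^2)x+n\bigr)=h_n(x)$. Your extra $3\times 3$ quotients for $O_{e,o}$ and $D_{a,b}$ do have characteristic polynomials exactly $f_{e,o}$ and $g_{a,b}$, which cleanly justifies the ``largest root'' claim that the paper leaves implicit. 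The only degenerate case at large $m$ is $o=1$, where your middle class is empty, but then $f_{e,1}=(x-1)(x^2-x-e)$ and $\rho>1$ settle it directly.
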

\begin{proof}
The first three assertions follow from Lemma~\ref{lem:R-equation} with
$(\alpha,\eta)=(0,0)$, $(0,1/2)$, and $(0,1)$, respectively. %after canceling a positive factor $x$ where appropriate.

For $H_n$, partition its vertices into the pendant vertex, its neighbor
$v$, the matching partner of $v$, the remaining $n-2$ vertices in the
same part, and the opposite part. The quotient matrix of $A(H_n)$ corresponding to this equitable partition is
\[
 Q_n=\begin{pmatrix}
 0&1&0&0&0\\
 1&0&1&0&n\\
 0&1&0&0&n\\
 0&0&0&1&n\\
 0&1&1&n-2&1
 \end{pmatrix}.
\]
A direct determinant calculation gives
$\det(xI-Q_n)=h_n(x)$. By Lemma~\ref{lem2.05}, the largest root of $h_n$ is $\rho(H_n)$.
\end{proof}

%The following exact test is useful when evaluating
%\eqref{eq:main-maximum}.

%\begin{corollary}
%\label{cor:book-tests}
%Let $m$ be sufficiently large and odd, and put $\theta=\beta_m$.
%Then
%\begin{align*}
% \rho(O_{e,o})>\theta
% &\iff (m-1-eo)\theta+e-m+1<0,\\
% \rho(D_{a,b})>\theta
% &\iff (m-1-ab)\theta+2a-m+1<0.
%\end{align*}
%Equality is characterized by replacing both strict inequalities with
%equalities.  Also,
%\[
% \rho(E_{a,b})>\theta\iff 1+\sqrt{ab}>\theta.
%\]
%\end{corollary}

%\begin{proof}
%Use $\theta^2-\theta=m-1$ to reduce
%$f_{e,o}(\theta)$ and $g_{a,b}(\theta)$.  This gives the two displayed
%linear expressions.  By Lemma~\ref{lem:R-equation}, each polynomial has
%only one root greater than $1$, and it is increasing through that root.
%The last assertion follows from Proposition~\ref{prop:candidate-radii}.
%\end{proof}

%\section{Dense cores at the book threshold}

For a graph $G$ with at least one edge, define
\[
 \Phi(G):=\frac{\rho(G)}{\sqrt{e(G)}}.
\]
Fix $0<\varepsilon<1/100$.  Following
Fang, Lin and Zhai~\cite{FangLinZhai}, an edge-induced proper subgraph
$H\subsetneq G$ is $\varepsilon$-dense in $G$ if
\begin{align*}
 &(1-\varepsilon)e(G)<e(H)<e(G),\ \ \ \ \ \ \ \Phi(H)-\Phi(G)\ge
   \frac{\varepsilon(e(G)-e(H))}{2e(G)}.
\end{align*}
An $\varepsilon$-core is a graph with no proper $\varepsilon$-dense
subgraph and no isolated vertex.

\begin{lemma}[\cite{FangZhaiZhang}]\label{lem2.3} Let $G$ be an $\varepsilon$-core with sufficiently large size $m$, and let ${\bf x} = (x_v)$ be a nonnegative unit eigenvector for $\rho(G)$. If $\rho(G) \geq \sqrt{m}$, then 
\begin{wst} 
\item[{\rm (i)}] $x_u x_v > (1 - \varepsilon)/(4\sqrt{m})$ for every edge $uv \in E(G)$;  
\item[{\rm (ii)}] $2m x_u^2 \geq (1 - 2\varepsilon)d_G(u)$ for every vertex $u \in V(G)$ with $d_G(u) < \varepsilon m$.
\end{wst}
\end{lemma}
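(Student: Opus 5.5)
The plan is to follow the standard edge-core argument of Fang, Lin and Zhai, as adapted by Fang, Zhai and Zhang, working with the eigen-equations $\rho x_u=\sum_{v\sim u}x_v$ and the unit normalization $\sum_v x_v^2=1$. The key consequence of $G$ being an $\varepsilon$-core is that deleting any edge set $F$ with $|F|<\varepsilon m$ leaves a subgraph $H$ that is \emph{not} $\varepsilon$-dense. Hence
\[
\Phi(H)<\Phi(G)+\frac{\varepsilon|F|}{2m}.
\]
For small deletions, Rayleigh gives the lower bound
\[
\rho(H)\ge \mathbf x^\top A(H)\mathbf x=\rho-2\sum_{uv\in F}x_ux_v.
\]
Combining this with $\sqrt{m-|F|}\ge\sqrt m\,(1-|F|/(2m)-O(|F|^2/m^2))$, the non-density condition turns into an inequality of the form
\[
2\sum_{uv\in F}x_ux_v>\frac{\rho|F|}{2m}(1-\varepsilon-o(1)).
\]
The assumption $\rho\ge\sqrt m$ then yields lower bounds in terms of $1/\sqrt m$.

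For (i), take $F=\{uv\}$ a single edge. Then $e(H)=m-1>(1-\varepsilon)m$, and also $H\neq G$ and $H$ is a proper edge-induced subgraph (if removing the edge isolates a vertex, $H$ is still the edge-induced subgraph on the remaining edges, so this is fine). The inequality gives $2x_ux_v\ge \frac{\rho}{2m}(1-\varepsilon+O(1/m))$. With $\rho\ge\sqrt m$ this is $x_ux_v\ge(1-\varepsilon)/(4\sqrt m)$ up to $O(m^{-3/2})$ terms. To get the strict inequality with exactly $1-\varepsilon$, I will track the expansion more carefully. I will use the exact inequality
\[
\frac{\rho-2x_ux_v}{\sqrt{m-1}}<\frac{\rho}{\sqrt m}+\frac{\varepsilon}{2m}.
\]
I will rearrange this to $2x_ux_v>\rho\bigl(1-\sqrt{(m-1)/m}\bigr)-\frac{\varepsilon\sqrt{m-1}}{2m}$. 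Since $1-\sqrt{1-1/m}>1/(2m)$ and $\rho\ge\sqrt m>\sqrt{m-1}$, this is
\[
2x_ux_v>\frac{\sqrt m}{2m}-\frac{\varepsilon\sqrt m}{2m}=\frac{1-\varepsilon}{2\sqrt m},
\]
so the $O(\cdot)$ terms even help.

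For (ii), take $F$ to be all edges at $u$, so $|F|=d(u)<\varepsilon m$ and $e(H)>(1-\varepsilon)m$. Deleting the star at $u$ removes at most $d(u)<m$ edges, so $H$ is proper and nonempty for large $m$. The Rayleigh bound is now
\[
\rho(H)\ge \rho-2x_u\sum_{v\sim u}x_v=\rho-2\rho x_u^2,
\]
using the eigen-equation at $u$. Non-density gives
\[
\frac{\rho(1-2x_u^2)}{\sqrt{m-d}}<\frac{\rho}{\sqrt m}+\frac{\varepsilon d}{2m},\qquad d=d(u).
\]
I rearrange this to $2\rho x_u^2>\rho\bigl(1-\sqrt{1-d/m}\bigr)-\frac{\varepsilon d\sqrt{m-d}}{2m}$. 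Using $1-\sqrt{1-t}\ge t/2$ and $\sqrt{m-d}\le\sqrt m\le\rho$, dividing by $\rho$ gives $2x_u^2>\frac{d}{2m}-\frac{\varepsilon d}{2m}$, that is, $4mx_u^2>(1-\varepsilon)d$. This is weaker than the claimed $2mx_u^2\ge(1-2\varepsilon)d$ by a factor of $2$, so the single-star deletion must be sharpened.

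The main obstacle is exactly this factor of $2$ in (ii). What I would try first is to not use the crude Rayleigh bound with $\mathbf x$ restricted to $H$. Instead, I would use the test vector $\mathbf x$ with the coordinate $x_u$ removed and renormalized. Then
\[
\rho(H)\ge\frac{\rho-2\rho x_u^2}{1-x_u^2}\ge\rho(1-x_u^2-O(x_u^4)).
\]
The loss is about $\rho x_u^2$ rather than $2\rho x_u^2$. Redoing the computation then gives $x_u^2\gtrsim\frac{(1-\varepsilon)d}{2m}$, i.e.\ $2mx_u^2\ge(1-2\varepsilon)d$ after absorbing the $x_u^4$ and second-order terms into the slack $\varepsilon d$. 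This absorption is valid because $x_u^2\le$ (roughly) $d/(\rho^2)\cdot\max$, which is small when $d<\varepsilon m$. If needed, I would first bound $x_u^2\le 1/2$ directly from $\rho^2x_u^2=(\sum_{v\sim u}x_v)^2\le d\sum_{v\sim u}x_v^2$.
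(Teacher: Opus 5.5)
Your argument is correct. Its final form is the standard proof from the Fang--Lin--Zhai / Fang--Zhai--Zhang line of work; the paper itself only cites this lemma and gives no proof. For (i) you delete the edge $uv$ and use $\rho(G-uv)\ge\rho-2x_ux_v$. For (ii) you delete the vertex $u$ and use the renormalized bound $\rho(G-u)\ge\rho(1-2x_u^2)/(1-x_u^2)$. In each case the bound is fed into the failure of $\varepsilon$-density.

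The only step to state precisely is the absorption at the end of (ii). The fallback bound $x_u^2\le 1/2$ you mention is not enough: from $\frac{x_u^2}{1-x_u^2}>\frac{(1-\varepsilon)d}{2m}$ you need $x_u^2\le\varepsilon/(1-\varepsilon)$. Your Cauchy--Schwarz estimate supplies this, since it gives $x_u^2\le d/\rho^2\le d/m<\varepsilon$. Hence $2mx_u^2>(1-\varepsilon)^2d\ge(1-2\varepsilon)d$. Alternatively, solve directly: $x_u^2>\frac{(1-\varepsilon)d}{2m+(1-\varepsilon)d}$, and $d<\varepsilon m$ gives $2mx_u^2>\frac{2(1-\varepsilon)}{2+\varepsilon}\,d\ge(1-2\varepsilon)d$.
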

Let $G$ and $H$ be two graphs (which may have distinct vertex sets). Define the \textit{distance} between $G$ and $H$ as
$$d(G,H) := |E(G) \setminus E(H)| + |E(H) \setminus E(G)|.$$
Given two disjoint vertex sets $U$ and $V$, we use $K_{U,V}$ to represent the complete 
bipartite graph with parts $U$ and $V$. Li, Liu and Zhang showed the following edge-spectral stability result.
\begin{lemma}[\cite{LiLiuZhangStability}]\label{lem2.4} 
Let $F$ be a graph with $\chi(F) = 3$. For every $\varepsilon > 0$, there exist $\delta > 0$ and $m_0$ such that if $G$ is an $F$-free graph with $m \geq m_0$ edges and $\rho^2(G) \geq (1 - \delta)m$, then there exist disjoint vertex sets $A, B \subseteq V(G)$ such that $d(G, K_{A,B}) \leq \varepsilon m$.
\end{lemma}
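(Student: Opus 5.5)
Lemma~\ref{lem2.4} is quoted from \cite{LiLiuZhangStability}. If I had to prove it, I would use the power sums $\sum_i\lambda_i^2=2e(G)$ and $\sum_i\lambda_i^3=6t(G)$, where $t(G)$ is the number of triangles. The idea is to show that $G$ spectrally resembles a connected bipartite graph with $\rho\approx-\lambda_n\approx\sqrt m$ and all other eigenvalues negligible. Isolated vertices are discarded throughout, so $n\le 2m$.

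\emph{Step 1: few triangles.} I would show that an $F$-free graph with $m$ edges has $t(G)=o(m^{3/2})$. Fix a small $c>0$ and let $L$ be the set of vertices of degree at least $c\sqrt m$, so $|L|\le 2\sqrt m/c$. A triangle meeting $V\setminus L$ contains an edge at a vertex of degree below $c\sqrt m$, and such an edge lies in fewer than $c\sqrt m$ triangles. Hence there are at most $cm^{3/2}$ such triangles. The graph $G[L]$ has $O(\sqrt m)$ vertices and is $F$-free. Since $\chi(F)=3$, $F$ is a subgraph of a blow-up of a triangle. So by the triangle removal lemma together with supersaturation, $G[L]$ has $o(|L|^3)$ triangles; otherwise it would contain many triangles and hence a copy of $F$. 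As $c$ is arbitrary, $t(G)=o(m^{3/2})$.

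\emph{Step 2: a very negative eigenvalue.} Write $\rho=\lambda_1$. From $\rho^2\ge(1-\delta)m$ we get $\sum_{i\ge2}\lambda_i^2\le(1+\delta)m$. If $\lambda_n\ge-(1-\eta)\rho$, then every $|\lambda_i|$ with $i\ge2$ is at most $(1-\eta)\rho$ (positive ones are at most $\rho/\sqrt2$ up to $O(\delta)$). Therefore
\[
6t\ \ge\ \rho^3-(1-\eta)\rho(1+\delta)m\ \gtrsim\ \eta m^{3/2},
\]
which contradicts Step 1. So $\lambda_n\le-(1-\eta)\sqrt m$. The identity $\sum\lambda_i^2=2m$ then forces $\sum_{1<i<n}\lambda_i^2=O(\eta+\delta)m$. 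Thus $A(G)$ is within Frobenius distance $O(\sqrt{(\eta+\delta)m})$ of the rank-two matrix $\rho xx^{\top}+\lambda_n yy^{\top}$.

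\emph{Step 3: reading off $K_{A,B}$.} From $(x\pm y)/\sqrt2$ I would define $A=\{v:\ y_v>0\}$ and $B=\{v:\ y_v<0\}$, restricted to the vertices where $|x_v|\approx|y_v|$. For the rank-two matrix, entries $(u,v)$ with $u,v$ on the same side are $\approx0$, and entries with $u,v$ on opposite sides are $\approx(\rho+|\lambda_n|)x_ux_v\approx2\sqrt m\,x_ux_v$. Comparing with $A(G)$ in Frobenius norm bounds two quantities by $O(\sqrt{\eta+\delta})\,m$: the number of edges inside $A$ or $B$, and the number of missing cross edges between vertices of substantial weight. Combined with $\sum_v x_v^2=1$ and $e(G)=m$, this gives $d(G,K_{A,B})\le\varepsilon m$ once $\delta$ and $\eta$ are chosen small.

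The main obstacle is Step 3, which converts the spectral rank-two approximation into a combinatorial statement about the parts. Vertices with small Perron weight may carry edges that the Frobenius estimate sees only weakly. To control them I would first pass to an $\varepsilon$-core as in Lemma~\ref{lem2.3}, whose lower bounds on $x_ux_v$ for every edge make each edge visible to the eigenvector comparison.
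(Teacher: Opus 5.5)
The paper does not prove this lemma; it quotes it from \cite{LiLiuZhangStability}, so your sketch has to stand on its own. Steps~1 and~2 do stand. The bound $t(G)=o(m^{3/2})$ is right, and so are $\lambda_n\le-(1-\eta)\rho$ and $\sum_{1<i<n}\lambda_i^2=O(\eta+\delta)m$. Your parenthetical claim that the positive $\lambda_i$ with $i\ge2$ are at most about $\rho/\sqrt2$ is unjustified: two disjoint copies of $K_k$ have $\lambda_2=\rho$ and $\rho^2=(1-o(1))m$. You do not need it, because positive eigenvalues only increase $\sum_i\lambda_i^3$, and $\sum_{i\ge2}\lambda_i^3\ge-|\lambda_n|\sum_{i\ge2}\lambda_i^2$ is all you use.

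The genuine gap is Step~3, which is where the lemma's content lies. The approximation $|y_v|\approx x_v$ does hold in $\ell_2$. Indeed, $|y|^{\top}A(G)|y|\ge|\lambda_n|\ge(1-\eta)\rho$ and $\lambda_2=O(\sqrt{(\eta+\delta)m})$ give $\||y|-x\|^2=O(\eta)$. Hence $A(G)$ is Frobenius-close to $\widetilde M=\rho(xx^{\top}-\tilde y\tilde y^{\top})$, where $\tilde y_v=\operatorname{sign}(y_v)x_v$. But the Frobenius bound charges a missing cross pair $(u,v)$ only $(2\rho x_ux_v)^2$. So you need a rule for cutting the sign classes of $y$ down to $A,B$ so that every pair in $A\times B$ has $2\rho x_ux_v\ge c$, while only $o(m)$ edges meet the discarded vertices. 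You give no such rule, and "vertices of substantial weight" cannot be an absolute threshold like $x_v\ge\tau m^{-1/4}$. In $K_{1,m}$, or in $K_{a,b}$ with $a\ll b$, every vertex of the large side has $x_v=(2b)^{-1/2}\ll m^{-1/4}$. The thresholds must adapt to the unknown scales of the two sides. You must also exclude parts of $G$ living at different scales: the set where a rank-one weight $2\rho x_ux_v$ is about $1$ is a diagonal band, not a rectangle. Picture rectangles $A_1\times B_1$ and $A_2\times B_2$ with $x$ large on $A_1\cup B_2$ and small on $A_2\cup B_1$.

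Passing to an $\varepsilon$-core does not fix this, for three reasons. First, Lemma~\ref{lem2.3}(i) bounds $x_ux_v$ only on edges, whereas the trouble comes from non-edges between light vertices. Second, Lemma~\ref{lem2.3} assumes $\rho\ge\sqrt m$, not $\rho^2\ge(1-\delta)m$. Third, showing that the core keeps $(1-o(1))m$ edges needs $\rho(H)\le(1+o(1))\sqrt{e(H)}$ for $F$-free $H$. That bound does follow from your Steps~1--2 via $6t\ge2\rho^3-2\rho m$, but you do not address it.

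The missing idea is a scale comparison, and it needs no core. Let $\kappa m$ bound $\|A(G)-\widetilde M\|_F^2$, with $\kappa=O(\eta+\delta)$. Call $u$ good if its row error is at most $\sqrt\kappa\,d(u)$; bad vertices meet at most $\sqrt\kappa\,m$ edges. For good $u$ one gets $2\rho^2x_u^2=(1+O(\kappa^{1/4}))d(u)$, and all but $O(\sqrt\kappa)d(u)$ neighbours $v$ satisfy $2\rho x_ux_v\ge\tfrac12$. Suppose good $u_1,u_2$ in the same sign class had $x_{u_1}\ge4x_{u_2}$. On those neighbours $v$ of $u_2$, the row of $\widetilde M$ at $u_1$ has entries $2\rho x_{u_1}x_v\ge2$. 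Each such entry costs at least a quarter of its square, whether or not $u_1v$ is an edge. The total cost is $\gtrsim(x_{u_1}/x_{u_2})^2d(u_2)/16\approx d(u_1)/16>\sqrt\kappa\,d(u_1)$, a contradiction. So good vertices in each class share a scale up to a factor~$4$. Now discard from one class the good vertices most of whose neighbours are bad, which costs another $O(\sqrt\kappa\,m)$ edges. Every remaining cross pair then has $2\rho x_ux_v\ge\tfrac18$, so at most $64\kappa m$ of them are non-edges, and $d(G,K_{A,B})=O(\sqrt\kappa)m$.
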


%Let $g^*(t)$ and $\beta(t)$ be the largest zeros of $f_1(x)=x^2-x-t$ and $f_2(x)=x^2-x-t+1$, respectively. Then
%\[
% g^*(t):=\frac{1+\sqrt{4t+1}}2,
% \qquad
% \beta(t):=\frac{1+\sqrt{4t-3}}2.
%\]

\section{Proof of Theorem~\ref{thm:main}}\label{s3}
In this section, fix odd $m$ with $m$ sufficiently large. Let $G$ be a $W_5$-free graph of size $m$ with no isolated vertex having the largest spectral radius. By the Perron-Frobenius theorem, the graph $G$ is connected. For convenience, denote by $\rho:=\rho(G),$ and let ${\bf x}=(x)_v$ be the unit Perron vector of $A(G).$ 

Note that the graph $B_m$ is a $W_5$-free graph of size $m$, by the choice of $G$, one has 
\begin{align}\label{eq3.1}
\rho\geq \rho(B_m)=\beta_m.
\end{align}
In the following, choose the constants in the hierarchy
\begin{equation}\label{eq:hierarchy}
 0<\varepsilon\ll\eta^2\ll\eta\ll1.
\end{equation}
\begin{lemma}%[Book-threshold core extraction]
\label{lem:book-core}
$G$ contains a connected $\varepsilon$-core $H$ of size $h$ such that 
\begin{equation}
 m-h=O_{\varepsilon}(1),
 \qquad
 \rho(H)\ge\beta(h).
\label{eq:bounded-core-loss}
\end{equation}
If $\rho(H)=\beta(h)$, then $H=G$.
\end{lemma}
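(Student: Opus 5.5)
We will use the standard Fang--Lin--Zhai iterative core extraction, with the book root $\beta(t)=\frac12(1+\sqrt{4t-3})$ playing the role of the threshold. Set $G_0:=G$. As long as $G_i$ is not an $\varepsilon$-core, it contains a proper $\varepsilon$-dense subgraph, and we let $G_{i+1}$ be such a subgraph, with isolated vertices deleted. Since $e(G_i)$ strictly decreases, the process stops at some $G_s=:H$, which is an $\varepsilon$-core. If $H$ is disconnected, $\rho(H)$ is attained on one component $H'$. Because $e(H')\le e(H)$, we get $\Phi(H')\ge\Phi(H)$. If $H'\neq H$, then $H'$ is itself dense in $H$: this will follow from the bound on edge loss below, which forces the relevant subgraphs to keep almost all edges. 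So we may take $H$ connected.

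\textbf{Step 1: telescoping.} Write $m_i=e(G_i)$. The density condition gives $\Phi(G_{i+1})-\Phi(G_i)\ge \varepsilon(m_i-m_{i+1})/(2m_i)\ge \varepsilon(m_i-m_{i+1})/(2m)$. Summing over all steps,
\[
\Phi(H)\ge \Phi(G)+\frac{\varepsilon(m-h)}{2m}.
\]
On the other hand, $H$ is $W_5$-free, so Theorem~\ref{thm:FZZ} applies once $h\ge m_0$. This holds because each step keeps a $(1-\varepsilon)$ fraction of the edges, and the number of steps is bounded by the total bounded gain in $\Phi$ (since $\Phi\le1$ for $\chi=3$-type bounds, i.e.\ $\rho\le\sqrt{2e}$, more than enough). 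Theorem~\ref{thm:FZZ} gives $\Phi(H)\le g^*(h)/\sqrt h=1+\frac1{2\sqrt h}+O(h^{-1})$. From \eqref{eq3.1}, $\Phi(G)\ge\beta_m/\sqrt m$. Expanding both sides,
\[
\frac{g^*(t)}{\sqrt t}=1+\frac{1}{2\sqrt t}+\frac{1}{8t}+O(t^{-3/2}),\qquad
\frac{\beta(t)}{\sqrt t}=1+\frac{1}{2\sqrt t}-\frac{3}{8t}+O(t^{-3/2}),
\]
so their difference is $\frac{1}{2t}+O(t^{-3/2})$. A first crude pass shows $h\ge m/2$, and then the $1/(2\sqrt h)$ versus $1/(2\sqrt m)$ terms must be compared. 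The key obstacle is that $\frac1{2\sqrt h}-\frac1{2\sqrt m}\approx\frac{m-h}{4m^{3/2}}$ is of lower order than $\varepsilon(m-h)/(2m)$. Hence
\[
\frac{\varepsilon(m-h)}{2m}\le \frac{m-h}{4m^{3/2}}+\frac{1}{2m}+O(m^{-3/2}),
\]
which yields $m-h\le (1+o(1))/\varepsilon=O_\varepsilon(1)$.

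\textbf{Step 2: $\rho(H)\ge\beta(h)$.} Here the real content is the precise second-order comparison. We have $\Phi(H)\ge\Phi(G)\ge\beta_m/\sqrt m$, and $t\mapsto\beta(t)/\sqrt t$ is decreasing for large $t$, since its derivative is $-\frac1{4}t^{-3/2}+O(t^{-2})$. Because $h\le m$, it follows that $\beta_m/\sqrt m\le\beta(h)/\sqrt h$. That comparison goes the wrong way, so we need the gain $\varepsilon(m-h)/(2m)$ to compensate. We have
\[
\frac{\beta(h)}{\sqrt h}-\frac{\beta_m}{\sqrt m}=\frac{m-h}{4m^{3/2}}(1+o(1)),
\]
which is much smaller than $\varepsilon(m-h)/(2m)$ when $m-h\ge1$. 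Hence
\[
\Phi(H)\ge \frac{\beta_m}{\sqrt m}+\frac{\varepsilon(m-h)}{2m}>\frac{\beta(h)}{\sqrt h}\quad\text{if } h<m,
\]
and therefore $\rho(H)>\beta(h)$ strictly whenever $H\ne G$.

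\textbf{Step 3: the equality case and connectivity.} If $h=m$, then $H$ is a spanning-edge subgraph with all edges of $G$ and no isolated vertices, so $H=G$. In that case $\rho(H)=\rho\ge\beta_m=\beta(h)$. Thus equality $\rho(H)=\beta(h)$ forces $H=G$.

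For connectivity, suppose $G$ were not itself a core and the final $H$ were disconnected. Passing to the component $H'$ carrying $\rho(H)$ only increases $\Phi$. The Step 1 argument, rerun for $H'$ (using only $\Phi(H')\ge\Phi(G)+\varepsilon(m-h)/(2m)$ and Theorem~\ref{thm:FZZ} for $H'$), shows that $H'$ also loses $O_\varepsilon(1)$ edges. Moreover $H'$ is a core, since any dense subgraph of $H'$, together with the other components, would give a dense subgraph of $H$. So we replace $H$ by $H'$, and the strict inequality of Step 2 persists.

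The expected main difficulty is tracking the $O(1/m)$ second-order terms carefully. This is exactly where the threshold $\beta$, rather than $g^*$, makes the loss finite.
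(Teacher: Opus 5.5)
Your Steps 1--2 and the equality case follow the paper's argument: the telescoped gain $\varepsilon(m-h)/(2m)$, the bound $\Phi(H)\le g^*(h)/\sqrt h$ from Theorem~\ref{thm:FZZ}, the $O(1/m)$ gap between $g^*(m)/\sqrt m$ and $\beta_m/\sqrt m$, and the $O((m-h)m^{-3/2})$ drift of $\beta(s)/\sqrt s$. However, the step that makes Theorem~\ref{thm:FZZ} applicable to $H$ fails as written. The number of extraction steps is \emph{not} bounded. A step may remove a single edge and gain only about $\varepsilon/(2m_i)$ in $\Phi$, so the bounded total gain only limits the number of steps to $O(m/\varepsilon)$. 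Compounding the factor $1-\varepsilon$ over that many steps gives no lower bound on $h$. Also, the trivial bound is $\Phi\le\sqrt2$ (from $\rho^2\le\operatorname{tr}A^2=2e$), not $\Phi\le1$; in fact $\Phi(G)\ge\beta_m/\sqrt m>1$. What is bounded is the sum of the relative losses $q_i=\Delta_i/m_i$. Summing the density inequalities gives $\sum_i q_i\le 2(\sqrt2-1)/\varepsilon$, and $\log(m/h)=-\sum_i\log(1-q_i)\le\frac{1}{1-\varepsilon}\sum_i q_i$, so $h\ge c_\varepsilon m$. Only then can Theorem~\ref{thm:FZZ} be applied to $H$. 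It gives $\Phi(H)\le1+o(1)$ and hence $h=(1-o(1))m$, which is your unexplained ``crude pass''. This is how the paper proceeds.

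The connectivity argument also needs repair. In Step~3 you replace $H$ by its component $H'$ and assert that $H'$ is a core because a dense subgraph of $H'$ ``together with the other components'' is dense in $H$. This is unjustified: re-adding those edges dilutes the gain in $\Phi$, and the density inequality need not survive. Moreover, $\Phi(H')\ge\Phi(G)+\varepsilon(m-h)/(2m)$ alone cannot bound $m-e(H')$, since $g^*(s)/\sqrt s$ is decreasing in $s$; you must use $\rho(H')=\rho(H)$. Your first-paragraph idea is the right one, and it shows no replacement is needed. Since $\rho(H')=\rho(H)$, convexity of $s^{-1/2}$ gives $\Phi(H')-\Phi(H)\ge\Phi(H)\frac{h-e(H')}{2h}>\frac{\varepsilon(h-e(H'))}{2h}$, using $\Phi(H)\ge\Phi(G)>1$. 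Theorem~\ref{thm:FZZ} applied to $H'$ (note $e(H')\ge\rho(H)^2/2$) then gives $\sqrt{h/e(H')}\le\Phi(H)\sqrt{h/e(H')}=\Phi(H')\le1+o(1)$, so $e(H')>(1-\varepsilon)h$. Thus $H'$ would be a proper $\varepsilon$-dense subgraph of the $\varepsilon$-core $H$, a contradiction, so $H$ itself is connected. The paper instead gets this in one line from Lemma~\ref{lem2.3}(i). Since $\rho(H)\ge\beta(h)>\sqrt h$, every edge satisfies $z_uz_v>0$ for every nonnegative unit eigenvector, which is impossible if two components contain edges. Your version avoids that imported lemma, but it must be written out as above.
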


\begin{proof}
Since $G$ is connected, if $G$ itself is an $\varepsilon$-core, there is nothing to prove. In the following, we consider $G$ is not an $\varepsilon$-core. Starting with $G_0=G$, whenever $G_i$ is not an $\varepsilon$-core, choose an $\varepsilon$-dense subgraph $G_{i+1}$. Write
\(
 m_i=e(G_i),
 \qquad
 \Delta_i=m_i-m_{i+1}.
\)
Then $0<\Delta_i<\varepsilon m_i$ and
\begin{equation}\label{eq:dense-step}
 \Phi(G_{i+1})-\Phi(G_i)
 \geq \frac{\varepsilon\Delta_i}{2m_i}.
\end{equation}
Since the edge numbers decrease strictly in the sequence $G_0,G_1,G_2,\ldots$, the process terminates at an $\varepsilon$-core $H=G_r$. Put $h=m_r$ and $t=m-h$.

We first verify that $h=(1-o(1))m$. Let $q_i=\Delta_i/m_i$. Then one has $m_{i+1}=m_i(1-q_i)$ and
$0<q_i<\varepsilon$, Thus
$$
\log \frac{m_0}{m_r}=\log \prod_{i=0}^{r-1}\frac{m_i}{m_{i+1}}=\sum_{i=0}^{r-1}\log\frac{m_i}{m_{i+1}}
=-\sum_{i=0}^{r-1}\log(1-q_i).
$$
For $q\in [0,\varepsilon],$ we have $\log(1-q)=-\int_0^q\frac{dt}{1-t}\geq-\frac{q}{1-\varepsilon}$. Since $0<q_i<\varepsilon$, we obtain
\[
 \sum_{i=0}^{r-1}\frac{\Delta_i}{m_i}=\sum_{i=0}^{r-1}q_i
 \geq\sum_{i=0}^{r-1}(-(1-\varepsilon)\log(1-q_i))
 =(1-\varepsilon)\log\frac{m}{h}.
\]
On the other hand, since $\rho(H)^2\leq\operatorname{tr}A(H)^2=2h$, one has $\Phi(H)\leq\sqrt2$. Whereas
$\Phi(G)\geq\beta(m)/\sqrt m=1+o(1)$. Summing
\eqref{eq:dense-step} therefore shows that
$$
\sqrt2-1+o(1)\geq\Phi(H)-\Phi(G)\geq\sum_{i=0}^{r-1}\frac{\varepsilon\Delta_i}{2m_i}
\geq \frac{\varepsilon(1-\varepsilon)}{2}\log\frac{m}{h},
$$
and so $\log(m/h)=O_{\varepsilon}(1)$, which implies $h\geq c_{\varepsilon}m$ for some
$c_{\varepsilon}>0$. If $h\leq(1-\alpha)m$ for some
$\alpha>0$, then summing
\eqref{eq:dense-step} shows that
\begin{align}\label{eq3.3}
 \Phi(H)-\Phi(G)\geq\sum_{i=0}^{r-1}\frac{\varepsilon\Delta_i}{2m_i}
 \geq \frac{\varepsilon}{2m}\sum_{i=0}^{r-1}\Delta_i
 =\frac{\varepsilon(m-h)}{2m}
 \geq\frac{\varepsilon\alpha}{2}.
\end{align}
But $h\geq c_{\varepsilon}m$, and $H$ is a $W_5$-free graph of size $h,$ so Theorem~\ref{thm:FZZ} gives
$\Phi(H)\leq g^*(h)/\sqrt h=1+o(1)$, while
$\Phi(G)\geq\beta(m)/\sqrt m=1+o(1)$, a contradiction to \eqref{eq3.3}. Hence,
\begin{equation}\label{eq:h-close-m}
 h=(1-o(1))m.
\end{equation}

We next show $\rho(h)>\beta(h)$.  Define
\[
 \psi_\beta(s):=\frac{\beta(s)}{\sqrt s}
 =\frac1{2\sqrt s}+\sqrt{1-\frac3{4s}}.
\]
For all sufficiently large $s$,
\begin{equation}\label{eq:beta-derivative}
 \psi_\beta'(s)
 =-\frac1{4s^{3/2}}
  +\frac3{8s^2\sqrt{1-3/(4s)}}
 \quad\Longrightarrow\quad
 |\psi_\beta'(s)|\leq C s^{-3/2}
\end{equation}
for an absolute constant $C$.  By \eqref{eq:h-close-m}, we may assume
$h\geq m/2$.  Thus the mean value theorem gives
\begin{equation}\label{eq:beta-change}
 |\psi_\beta(h)-\psi_\beta(m)|
 \leq C_1\frac{t}{m^{3/2}}.
\end{equation}
Summing \eqref{eq:dense-step} and using $m_i\leq m$ gives
\begin{equation}\label{eq:accumulated-gain}
 \Phi(H)-\Phi(G)\geq\sum_{i=0}^{r-1}\frac{\varepsilon\Delta_i}{2m_i}
 \geq\frac{\varepsilon t}{2m}.
\end{equation}
Note that $t>0$, for sufficiently large $m$, the right-hand side of
\eqref{eq:accumulated-gain} is strictly larger than the right-hand side
of \eqref{eq:beta-change}. Consequently,
\[
 \Phi(H)\geq \Phi(G)+\frac{\varepsilon t}{2m}
 \geq\psi_\beta(m)+\frac{\varepsilon t}{2m}
 >\psi_\beta(h),
\]
so $\rho(H)>\beta(h)$. 

Since $\beta(h)^2=h-1+\beta(h)>h$ for large $h$, we have
$\rho(H)>\sqrt h$. Then Lemma \ref{lem2.3} shows $H$ is connected.

It remains to prove that $t$ is bounded. %This is the point at which the last display in the original proof must be replaced.  
From Theorem~\ref{thm:FZZ}, 
\eqref{eq:accumulated-gain} and \eqref{eq3.1}, one has
\begin{equation}\label{eq:loss-start}
 \frac{\varepsilon t}{2m}
 \leq \frac{g^*(h)}{\sqrt h}-\frac{\beta(m)}{\sqrt m}.
\end{equation}
Set $\psi_*(s)=g^*(s)/\sqrt s$. Then
\begin{align}
 \frac{g^*(h)}{\sqrt h}-\frac{\beta(m)}{\sqrt m}
 &=\bigl(\psi_*(h)-\psi_*(m)\bigr)
   +\frac{g^*(m)-\beta(m)}{\sqrt m}.             \label{eq:correct-split}
\end{align}
Since
\[
 \psi_*'(s)
 =-\frac1{4s^{3/2}}
  -\frac1{8s^2\sqrt{1+1/(4s)}},
\]
and $h\geq m/2$, one has
\begin{equation}\label{eq:star-change}
 0\leq\psi_*(h)-\psi_*(m)
 \leq C_2\frac{t}{m^{3/2}}.
\end{equation}
The remaining gap is explicit:
\begin{equation}\label{eq:threshold-gap}
 \frac{g^*(m)-\beta(m)}{\sqrt m}
 =\frac{2}{\sqrt m\bigl(\sqrt{4m+1}+\sqrt{4m-3}\bigr)}
 \leq\frac{C_3}{m}.
\end{equation}
Combining \eqref{eq:loss-start}--\eqref{eq:threshold-gap} yields
\[
 \frac{\varepsilon t}{2m}
 \leq C_2\frac{t}{m^{3/2}}+\frac{C_3}{m}.
\]
After multiplication by $m$, the term $C_2t/\sqrt m$ can be absorbed
into the left-hand side for sufficiently large $m$.  Thus
$t\leq 4C_3/\varepsilon$, proving $m-h=O_{\varepsilon}(1)$.
\end{proof}

%We next record the structural part of the argument in \cite[Sections~4--5]{FangZhaiZhang} at the slightly lower book threshold.  The equality statement in the split case is included because it is needed below.

\begin{lemma}%[Book-threshold core dichotomy]
\label{lem:core-dichotomy}
%For every sufficiently small fixed $\eta>0$, there is a sufficiently
%small $\varepsilon>0$ such that the following holds.  
Let $H$ be a connected $W_5$-free $\varepsilon$-core of size $h$ with $h$ sufficiently large and $\rho(H)\ge\beta(h)$. Then exactly one of the following occurs.

\begin{wst}%[label=\textup{(\roman*)}]
 \item[{\rm (i)}] $h$ is odd, $H\cong B_h$, and $\rho(H)=\beta(h)$.
 \item[{\rm (ii)}] There is a partition $V(H)=A\cup B$, with
 $a=|A|\le b=|B|<100a$, such that
 \begin{align}
  &a,b=\Theta(\sqrt h),\qquad ab=(1+o(1))h,
  \label{eq:balanced-sizes}\\
  &d_B(u)>(1-\eta)b\quad(u\in A),\qquad
    d_A(v)>(1-\eta)a\quad(v\in B),
  \label{eq:high-cross-degree}\\
  &\Delta(H[A])\le1,\qquad \Delta(H[B])\le1.
  \label{eq:core-matchings}
 \end{align}
 In addition, every coordinate of the unit Perron vector of $A(H)$ is $O(h^{-1/4})$.
\end{wst}
\end{lemma}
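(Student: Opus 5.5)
The plan is to follow the structural analysis of Fang, Zhai and Zhang, but run it at the lower threshold $\rho(H)\ge\beta(h)$. We have $\beta(h)^2=h-1+\beta(h)>h$, so $\rho(H)>\sqrt h$. Hence Lemma~\ref{lem2.3} applies to the unit Perron vector $\mathbf{x}$ of $H$, and Lemma~\ref{lem2.4}, with $F=W_5$ and $\chi(W_5)=3$, applies as well. The argument splits according to whether $H$ has a vertex of linear degree.

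\emph{Book case.} Suppose first that some vertex $w$ has $x_w$ bounded below by an absolute constant, or equivalently $d(w)\ge \eta h$. Since $H$ is $W_5$-free, $H[N(w)]$ is $C_4$-free, so $e(N(w))=O(d(w)^{3/2})$. I then use the eigen-equation $\rho^2x_w=\sum_{v}|N(v)\cap N(w)|x_v$ together with Lemma~\ref{lem2.3}(ii), which gives $x_v^2\gtrsim d(v)/(2h)$ for low-degree $v$. Combining these with $\rho^2\ge h-1+\rho$, I expect to show that there are two adjacent vertices $w,w'$ whose common neighbourhood contains almost all edges. The $W_5$-freeness then forces every other vertex to have degree $2$ with neighbours exactly $w,w'$. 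Indeed, a vertex outside $N(w)\cap N(w')$, or an edge inside the common neighbourhood, decreases $\rho^2-\rho$ relative to the edge count. The comparison is exactly the book identity $\beta_h^2-\beta_h=h-1$, so one gets $\rho(H)\le \beta(h)$ with equality only for $B_h$, and that equality forces $h$ to be odd.

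\emph{Split case.} If every vertex has $d(v)<\eta h$, Lemma~\ref{lem2.4} gives $A,B$ with $d(H,K_{A,B})\le \varepsilon' h$. Then $ab\ge(1-\varepsilon')h$ and $ab\le h$, and the factor $100$ comes from $\rho^2\approx ab$ together with the degree cap, which forbids a very unbalanced bipartition. Lemma~\ref{lem2.3}(ii) with $\Delta<\eta h$ makes every Perron entry comparable to $\sqrt{d(v)/h}$. In particular, every vertex of degree noticeably less than $(1-\eta)b$ (for $u\in A$) has small weight. Removing such vertices and their edges would produce an $\varepsilon$-dense subgraph, contradicting the core property. This step needs a first-order computation of $\Phi$ under deletion using $\rho(H')\ge\rho(H)-\sum_{e}2x_ux_v$-type bounds. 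It yields \eqref{eq:high-cross-degree}, with vertices outside $A\cup B$ redistributed, after which $V(H)=A\cup B$.

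Next, given \eqref{eq:high-cross-degree}, any two neighbours of $u\in A$ inside $A$ have at least $(1-2\eta)b$ common neighbours in $B$. If $u$ had two $A$-neighbours $u_1,u_2$, pick $v,v'\in B$ adjacent to all of $u,u_1,u_2$. Then $u$ is the centre of the $4$-cycle $u_1vu_2v'$, giving a $W_5$, which is a contradiction. Hence $\Delta(H[A])\le1$, and symmetrically $\Delta(H[B])\le 1$. Finally, the degrees are $\Theta(\sqrt h)$ and $\rho x_v=\sum_{u\sim v}x_u$, so the entries are nearly constant on each part. Normalising, $\sum x_v^2=1$ over $\Theta(\sqrt h)$ vertices gives $x_v=O(h^{-1/4})$. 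The two cases are exclusive because $B_h$ has vertices of degree $(h+1)/2$.

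I expect the main obstacle to be the book case. There the threshold is exactly tight, so the second-order counting must be sharp enough to force $B_h$ exactly rather than approximately.
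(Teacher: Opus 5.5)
Your outline has two genuine gaps, and they sit exactly where the paper's proof does its real work.

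\textbf{The split case does not give balance.} You claim that the degree cap $\Delta(H)<\eta h$, together with $\rho^2\approx ab$, forbids a very unbalanced bipartition. It does not. Typical vertices of $A$ have degree about $b$, so the cap only yields $b\lesssim\eta h$, i.e.\ $a\gtrsim 1/\eta$. Consider a near-$K_{a,b}$ structure with $a\approx h^{1/3}$ and $b\approx h^{2/3}$. It has all degrees $o(h)$ and $\rho^2=(1+o(1))h$, so it falls into your split case. No first-order information can exclude it, because $\rho(K_{a,b})^2=ab$ for every ratio $b/a$. Imbalance appears only at second order: for $K_{a,b}$ plus perfect matchings, $\rho^2-\rho-h\approx-(\sqrt b-\sqrt a)^2/2$. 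So such configurations are excluded only because they violate $\rho\ge\beta(h)$, and proving this for an arbitrary near-bipartite core needs an exact argument. The paper therefore splits on $b<100a$ versus $b\ge100a$ rather than on maximum degree. It disposes of the whole unbalanced regime, whether $a$ is bounded or not, with the same exact argument that identifies $B_h$.

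Even in the balanced case, your step to \eqref{eq:high-cross-degree} is too weak. Lemma~\ref{lem2.3}(ii) gives only the lower bound $z_u^2\ge(1-2\varepsilon)d(u)/(2h)$, and Cauchy--Schwarz gives an upper bound off by a factor $\sqrt2$; together they cannot force $d_B(u)>(1-\eta)b$. What is needed is $z_u\le(\gamma_u+O(\eta^2)+o(1))\sqrt{b/(2h)}$ with $\gamma_u=d_B(u)/b$, which is then compared with $\sqrt{\gamma_u}$. The paper obtains this from the two-step eigen-equation and the max-entry bounds of Claim~\ref{claim3}. Those bounds give uniform control of $\max_{v\in B}z_v$ only because $b<100a$.

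\textbf{The book case is only a heuristic.} You already write down the two-step eigen-equation. The missing idea is to apply it at a vertex $u^*$ of maximum entry $z^*$ of the unit Perron vector $\mathbf z$ of $H$ (with $\rho'=\rho(H)$), and to rewrite it as an exact surplus identity. Put $U=N(u^*)$, $W=V(H)\setminus(\{u^*\}\cup U)$ and $f(w)=d_U(w)(z^*-z_w)+\tfrac12d_W(w)z^*\ge0$. Then
\[
 (\rho'^2-\rho'-h)z^*=\sum_{uv\in E(H[U])}(z_u+z_v-z^*)-\rho'z^*-\sum_{w\in W}f(w).
\]
Three facts show that only edges at a single vertex $u_0$ can have positive surplus $z_u+z_v-z^*$:
\begin{itemize}
\item by $W_5$-freeness, two $A$-neighbours of $u^*$ have at most one common neighbour in $N_B(u^*)$;
\item $z_v\le z^*/4$ for $v\in B$;
\item $d_B(u)>\tfrac{11}{20}b$ for $u\in A$ with $z_u>\tfrac35z^*$.
\end{itemize}
The total surplus at $u_0$ is at most $\rho'z_{u_0}-z^*\le(\rho'-1)z^*$. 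Hence $\rho'^2-\rho'-h\le-1$, and together with $\rho'\ge\beta(h)$ this forces equality in every step. The equality cases give $W=\emptyset$ and $u_0$ adjacent to all of $U\setminus\{u_0\}$. They also rule out any edge inside $U\setminus\{u_0\}$, since such an edge would force $\rho'=5$. So $H\cong B_h$ and $h$ is odd.

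Your sentence that a vertex outside $N(w)\cap N(w')$, or an edge inside it, ``decreases $\rho^2-\rho$ relative to the edge count'' is precisely what this identity proves. Without an identity this exact you only get approximate book structure, which is not enough at a tight threshold.
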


\begin{proof}
Write $\rho'=\rho(H)$ and let ${\bf z}=(z_v)$ be the unit Perron vector of $H$. 
Since
\[
 \rho'^2\geq\beta(h)^2=h-1+\beta(h)>h,
\]
and Theorem~\ref{thm:FZZ} gives $\rho'\leq g^*(h)$, we have
$\rho'=(1+o(1))\sqrt h$. Note that $\chi(W_5)=3,$ by Lemma~\ref{lem2.4}, there are disjoint vertex subsets $A,B\subseteq V(H)$ for which
\[
 D:=d\bigl(H,K_{A,B}\bigr)=o(h).
\]
Here vertices outside $A\cup B$ are regarded as isolated in $K_{A,B}$. Among all such pairs choose $A,B$ first to minimize $D$ and,
subject to that, to minimize
\[
 R:=V(H)\setminus(A\cup B).
\]
Relabel the parts so that $a=|A|\leq b=|B|$. Every discrepancy between
$h$ and $ab$ is counted by $D$, and therefore
\begin{equation}\label{eq:ab-close}
 |h-ab|\leq D=o(h)
 \quad\Longrightarrow\quad
 ab=(1+o(1))h.
\end{equation}
The minimality of $D$ and $R$ also gives
\begin{align}
 d_B(u)&\geq b/2 &&\text{  (for all } u\in A),                             \label{eq:min-A}\\
 d_A(v)&\geq a/2 &&\text{  (for all } v\in B),                             \label{eq:min-B}\\
 d_A(w)&<a/2,\quad d_B(w)<b/2 &&\text{  (for all } w\in R).                \label{eq:min-R}
\end{align}
In fact, if there is a vertex $u\in A$ such that $d_B(u)<b/2,$ choose $A'=A\setminus \{u\}$. Then 
\begin{align*}
d(H,K_{A',B})&=|E(H)\setminus E(K_{A',B})|+|E(K_{A',B})\setminus E(H)| \\
&=(|E(H)\setminus E(K_{A,B})|+d_B(u))+(|E(K_{A,B})\setminus E(H)|-(b-d_B(u)))\\
&=D-(b-2d_B(u))<D,
\end{align*}
a contradiction to the minimality of $D$. Therefore, \eqref{eq:min-A} holds. By symmetry, one sees \eqref{eq:min-B} holds. If there is a vertex $w\in R$ such that 
$d_B(w)\geq b/2$, choose $A''=A\cup \{w\}.$ Then 
\begin{align*}
d(H,K_{A'',B})&=|E(H)\setminus E(K_{A'',B})|+|E(K_{A'',B})\setminus E(H)| \\
&=(|E(H)\setminus E(K_{A,B})|-d_B(w))+(|E(K_{A,B})\setminus E(H)|+(b-d_B(w)))\\
&=D-(2d_B(w)-b)\leq D.
\end{align*}
Now $d(H,K_{A'',B})<D$ contradicts the minimality of $D$, and $d(H,K_{A'',B})=D$ contradicts the minimality of $R$. Therefore, for all $w\in R$, one has 
$d_B(w)< b/2,$ By symmetry, for all $w\in R$, one has $d_A(w)< a/2.$

Define the good sets
\[
 A_0=\{u\in A:d_B(u)\geq(1-\eta^2)b\},
 \qquad
 B_0=\{v\in B:d_A(v)\geq(1-\eta^2)a\}.
\]
Since $|E(K_{A,B})\setminus E(H)|\leq D=o(ab)$,
\begin{equation}\label{eq:good-sets}
 |A\setminus A_0|=o(a),
 \qquad
 |B\setminus B_0|=o(b).
\end{equation}

We shall repeatedly use the following local consequence:
\begin{equation}\label{eq:local-obstruction}
 d_B(w)>4\eta^2b
 \quad\Longrightarrow\quad
 d_A(w)\leq\eta^2a+1\quad (\forall w\in V(H)).
\end{equation}
Indeed, if there is a vertex $w\in V(H)$ such that $d_B(w)>4\eta^2b$ and $d_A(w)>\eta^2a+1$. Then 
$$|N_A(w)\cap A_0|\geq d_A(w)-|A\setminus A_0|> (\eta^2+o(1))a+1>1,$$ 
and so $N_A(w)\cap A_0$ contains two distinct vertices
$u_1,u_2$. Note that $b\geq a$ and \eqref{eq:ab-close} imply $b$ is sufficiently large for sufficiently large $h$. Then
\[
 |N_B(w)\cap N_B(u_1)\cap N_B(u_2)|\geq |N_B(w)|+ |N_B(u_1)|+ |N_B(u_2)|-2b\geq 2\eta^2b\geq 2,
\]
and so there are at least two vertices $v_1,v_2$ in $N_B(w)\cap N_B(u_1)\cap N_B(u_2)$, give the cycle $u_1v_1u_2v_2u_1$ inside $H[N(w)]$, so $H[N(w)\cup \{w\}]$ contains a $W_5,$ a contradiction. Hence \eqref{eq:local-obstruction} holds. If $a$ is sufficiently large, then the symmetric assertion also holds. In particular, by \eqref{eq:min-A}, \eqref{eq:min-B}, and \eqref{eq:local-obstruction}, when $a$ is sufficiently large, one has
\begin{equation}\label{eq:small-internal-degree}
 d_A(u)\leq2\eta^2a\quad(u\in A),
 \qquad
 d_B(v)\leq2\eta^2b\quad(v\in B).
\end{equation}

\begin{claim}\label{claim1}
There is an absolute constant $C>0$ such that
\begin{align}
 z_u^2&\geq(1-C\eta^2)\frac{b}{2h} &&(u\in A_0),         \label{eq:lower-A0}\\
 z_v^2&\geq(1-C\eta^2)\frac{a}{2h} &&(v\in B_0).         \label{eq:lower-B0}
\end{align}
\end{claim}
\begin{proof}
To prove \eqref{eq:lower-B0}, take $v\in B_0$. We have
$d_H(v)\leq a+D<\varepsilon h$, and therefore Lemma~\ref{lem2.3} gives
\begin{align}\label{eq3.10}
 z_v^2
 \geq(1-2\varepsilon)\frac{d_H(v)}{2h}
 \geq(1-2\varepsilon)(1-\eta^2)\frac{a}{2h}
 \geq(1-3\eta^2)\frac{a}{2h},
\end{align}
as desired. 

Now let $u\in A_0$. By \eqref{eq:good-sets},
$d_{B_0}(u)\geq(1-2\eta^2)b$. Then $A(H){\bf z}=\rho{\bf z}$ and \eqref{eq3.10} give
\[
 \rho' z_u
 \geq\sum_{v\in N_{B_0}(u)}z_v
 \geq(1-2\eta^2)b
       \sqrt{(1-3\eta^2)\frac{a}{2h}}.
\]
Squaring and using \eqref{eq:ab-close} and
$\rho'^2=(1+o(1))h$ proves \eqref{eq:lower-A0}.
\end{proof}

Summing \eqref{eq:lower-A0} and \eqref{eq:lower-B0}, and using
\eqref{eq:good-sets} and \eqref{eq:ab-close}, gives
\begin{equation}\label{eq:mass-halves}
 \min\left\{\sum_{u\in A_0}z_u^2,
             \sum_{v\in B_0}z_v^2\right\}
 \geq\frac12\bigl(1-C\eta^2-o(1)\bigr).
\end{equation}
Consequently, for every $S\subseteq A$ and $T\subseteq B$,
\begin{align}
 \sum_{u\in S}z_u^2&\leq1-\sum_{u\in B_0}z_u^2-\sum_{u\in A_0\setminus S}z_u^2
 \leq\frac{|S|}{2a}+C_1\eta^2+o(1),                    \label{eq:subset-A}\\
 \sum_{v\in T}z_v^2&\leq1-\sum_{v\in A_0}z_v^2-\sum_{v\in B_0\setminus T}z_v^2
 \leq\frac{|T|}{2b}+C_1\eta^2+o(1).                    \label{eq:subset-B}
\end{align}
%For completeness, the first estimate follows by putting
%$\lambda_A=(1-C\eta^2)b/(2h)$ and observing that
%\[
% \sum_{u\in S}z_u^2
% \leq\left(1-\sum_{v\in B_0}z_v^2\right)
%      -\lambda_A|A_0\setminus S|.
%\]
%Now use $|A_0|=(1-o(1))a$, $ab=(1+o(1))h$, and \eqref{eq:mass-halves}.  The proof of \eqref{eq:subset-B} is identical.

\begin{claim}\label{claim2}
$R=\emptyset.$
\end{claim}
\begin{proof}
Suppose to the contrary that $R\neq \emptyset$, then take a vertex $w\in R$. Note that every edge incident with $w$ is in $E(H)\setminus E(K_{A,B})$, so $d_H(w)\leq D=o(h)$.  Now by Lemma~\ref{lem2.3} and the Cauchy--Schwarz inequality, one has
\begin{equation}\label{eq:R-lower}
 \sum_{u\in N(w)}z_u^2
 \geq\frac1{d_H(w)}\left(\sum_{u\in N(w)}z_u\right)^2
 =\frac{\rho^2z_w^2}{d_H(w)}
 \geq(1-2\varepsilon)\frac{\rho^2}{2h}
 >\frac{1-2\varepsilon}{2}.
\end{equation}
On the other hand, by \eqref{eq:mass-halves},
\begin{equation}\label{eq:R-mass}
 \sum_{u\in R}z_u^2=1-\sum_{u\in A}z_u^2-\sum_{u\in B}z_u^2\leq C\eta^2+o(1).
\end{equation}
If $d_B(w)\leq4\eta^2b$, then
\eqref{eq:min-R}, \eqref{eq:subset-A}, \eqref{eq:subset-B}, and
\eqref{eq:R-mass} imply
\[
 \sum_{u\in N(w)}z_u^2
 \leq\frac{d_A(w)}{2a}+\frac{d_B(w)}{2b}
      +O(\eta^2)+o(1)
 \leq\frac14+O(\eta^2)+o(1),
\]
a contradiction to \eqref{eq:R-lower}. Hence $d_B(w)>4\eta^2b$, and so 
\eqref{eq:local-obstruction} gives $d_A(w)\leq\eta^2a+1$.
If $a\geq4$, then
\[
 \frac{d_A(w)}{2a}\leq\frac18+\frac{\eta^2}{2},
 \qquad
 \frac{d_B(w)}{2b}<\frac14,
\]
again contradicting \eqref{eq:R-lower}.  If $a<4$, integrality and
$d_A(w)<a/2$ give
\[
 \frac{d_A(w)}{2a}\leq\frac14-\frac1{4a}
 \leq\frac14-\frac1{12},
\]
and the same contradiction follows. Thus $R=\emptyset.$
\end{proof}

By Claim~\ref{claim2}, $V(H)=A\cup B$. 
Put
\[
 z_A^*=\max_{u\in A}z_u,
 \qquad
 z_B^*=\max_{v\in B}z_v,
 \qquad
 z^*=\max\{z_A^*,z_B^*\}.
\]
\begin{claim}\label{claim3}
If $a$ is sufficiently large, then the numbers $z_A^*,\,z_B^*$ and $z^*$ satisfy
\begin{align}
 z_A^*&\leq(1+O(\eta^2)+o(1))\sqrt{\frac{b}{2h}}
       +(O(\eta^2)+o(1))z^*,                              \label{eq:max-A}\\
 z_B^*&\leq(1+O(\eta^2)+o(1))\sqrt{\frac{a}{2h}}
       +(O(\eta^2)+o(1))z^*.                              \label{eq:max-B}
\end{align}
\end{claim}
\begin{proof}
For any $w\in V(H)$, applying the eigen-equation twice gives
\begin{align}\notag
 \rho'^2z_w&=\sum_{u\in N(w)}\sum_{v\in N(u)}z_v \\ \notag
 &=\sum_{u\in N_B(w)}\sum_{v\in N_B(u)}z_v+\sum_{u\in N_A(w)}\sum_{v\in N_A(u)}z_v
 +\sum_{u\in N_A(w)}\sum_{v\in N_B(u)}z_v+\sum_{u\in N_B(w)}\sum_{v\in N_A(u)}z_v \\
 &\leq d_A(w)b z_B^*+d_B(w)a z_A^*+2Dz^*. \label{eq:two-step}
\end{align}

Choose $u_0\in A_0$ with $z_{u_0}=\min\{z_u|u\in A_0\}$. From \eqref{eq:ab-close}, \eqref{eq:good-sets}, and
\eqref{eq:mass-halves}, one gets
\begin{align}\notag
 z_{u_0}^2\leq\frac{1}{a-o(a)}\sum_{u\in A_0}z_u^2
 \leq \frac{1}{a-o(a)}\left(1-\sum_{u\in B_0}z_u^2\right)\leq \frac{1}{2(a-o(a))}(1+C\eta^2+o(1)),
\end{align}
and so 
\begin{align}\label{eq:small-good-coordinate}
z_{u_0}\leq\sqrt{\frac{1}{2(a-o(a))}(1+C\eta^2+o(1))}
=(1+O(\eta^2)+o(1))\sqrt{\frac{b}{2h}}.
\end{align}

Let $u_1\in A$ satisfy $z_{u_1}=z_A^*$. Subtracting the
eigen-equations $A(H){\bf z}=\rho'{\bf z}$ at $u_1$ and $u_0$, then applying the eigen-equation $A(H){\bf z}=\rho'{\bf z}$ 
once more, yields
\begin{align}\notag
 \rho'^2(z_{u_1}-z_{u_0})&=\sum_{u\in N(u_1)}\rho' z_u-\sum_{u\in N(u_0)}\rho' z_u \\ \notag
 &\leq\sum_{u\in N(u_1)\setminus N(u_0)}\rho' z_u \\ \notag
 &\leq\sum_{u\in N(u_1)\setminus N(u_0)}d_H(u)z^*\\ \notag
 &=\sum_{u\in N_A(u_1)\setminus N_A(u_0)}d_A(u)z^*
 +\sum_{u\in N_B(u_1)\setminus N_B(u_0)}d_B(u)z^*\\ \notag
 &\quad +\sum_{u\in N_A(u_1)\setminus N_A(u_0)}d_B(u)z^*
 +\sum_{u\in N_B(u_1)\setminus N_B(u_0)}d_A(u)z^*\\ \label{eq3.8}
&\leq (2D+d_A(u_1)b+|N_B(u_1)\setminus N_B(u_0)|a)z^*.
\end{align}
By \eqref{eq:small-internal-degree}, 
\[
 d_A(u_1)b+|N_B(u_1)\setminus N_B(u_0)|a\leq d_A(u_1)b+(b-d_B(u_0))a
 \leq2\eta^2ab+\eta^2ab.
\]
Then \eqref{eq3.8} gives 
\begin{align}\label{eq3.9}
 \rho'^2(z_{u_1}-z_{u_0})\leq (2D+3\eta^2ab)z^*=(3\eta^2+o(1))abz^*.  
\end{align} 
Since $\rho'^2>h$, $ab=(1+o(1))h$, together with \eqref{eq:small-good-coordinate} and \eqref{eq3.9}, we obtain
$$
z_A^*=z_{u_1}\leq (1+O(\eta^2)+o(1))\sqrt{\frac{b}{2h}}
       +(O(\eta^2)+o(1))z^*.
$$

This proves \eqref{eq:max-A}. Similarly, \eqref{eq:max-B} can be proved by symmetry.
\end{proof}

We complete the proof of Lemma~\ref{lem:core-dichotomy} by considering the following two cases. 

\textbf{Case 1: $b<100a$.} In this case, \eqref{eq:ab-close} implies
$a,b=\Theta(\sqrt h)$, so in particular $a$ is sufficiently large. If
$z^*=z_A^*$, then by \eqref{eq:max-A}, one has
\begin{align}
 z^*\leq(1+O(\eta^2)+o(1))\sqrt{\frac{b}{2h}}=O(h^{-1/4}).          \label{eq:balanced-max-A}
\end{align} 
If $z^*=z_B^*$, then by \eqref{eq:max-B}, one has
\begin{align}
 z^*\leq(1+O(\eta^2)+o(1))\sqrt{\frac{a}{2h}}=O(h^{-1/4}).          \label{eq:balanced-max-B}
\end{align}
By \eqref{eq:balanced-max-A} and \eqref{eq:balanced-max-B}, for all $v\in V(H),$
\begin{align}\label{eq:coordinate-order}
 z_v=O(h^{-1/4}).
\end{align}

Now we consider the number of neighbors of a vertex $u\in A$ in $B$, and the number of neighbors of a vertex $v\in B$ in $A$. Fix $u\in A$ and set
\[
 \alpha_u=\frac{d_A(u)}a,
 \qquad
 \gamma_u=\frac{d_B(u)}b.
\]
By \eqref{eq:min-A} and \eqref{eq:small-internal-degree},
$\gamma_u\geq1/2$ and $\alpha_u\leq2\eta^2$.  Since
$d_H(u)=O(\sqrt h)<\varepsilon h$, by Lemma~\ref{lem2.3}, one has
\begin{align}\label{eq:cross-lower}
 z_u\geq\sqrt{(1-2\varepsilon)\frac{d_B(u)}{2h}}
 =(1-2\varepsilon)\sqrt{\gamma_u}\sqrt{\frac{b}{2h}}.
\end{align}
On the other hand, \eqref{eq:two-step},
\eqref{eq:balanced-max-A}, and \eqref{eq:balanced-max-B} give
\begin{equation}\label{eq:cross-upper}
 z_u
 \leq\bigl(\gamma_u+O(\eta^2)+o(1)\bigr)
          \sqrt{\frac{b}{2h}}.
\end{equation}
If $\gamma_u\leq1-\eta$, then, because the function
$f(t)=\sqrt t-t$ is decreasing on $[1/2,1]$,
\[
 \sqrt{\gamma_u}-\gamma_u
 \geq\sqrt{1-\eta}-(1-\eta)>\eta/3
\]
for small $\eta$.  This contradicts
\eqref{eq:cross-lower}--\eqref{eq:cross-upper} under the hierarchy
\eqref{eq:hierarchy}.  Hence
$d_B(u)>(1-\eta)b$ for every $u\in A$.  The symmetric argument proves
$d_A(v)>(1-\eta)a$ for every $v\in B$.

Finally, we are going to show $\Delta(H[A])\le1$ and $\Delta(H[B])\le1.$ Suppose to the contrary that there is a vertex $u\in A$ such that $d_A(u)\geq 2,$ then choose two distinct vertices $u_1,u_2\in N_A(u)$, one has
\[
 |N_B(u)\cap N_B(u_1)\cap N_B(u_2)|\geq d_B(u)+d_B(u_1)+d_B(u_2)-2b>(1-3\eta)b\geq2.
\]
Hence there are two distinct vertices $v_1,v_2$ in $N_B(u)\cap N_B(u_1)\cap N_B(u_2)$ form the cycle $u_1v_1u_2v_2u_1$ inside $H[N(u)]$, giving a $W_5$ in $H$, a contradiction.  Thus $\Delta(H[A])\leq1$, and symmetrically $\Delta(H[B])\leq1$.
This proves all assertions in alternative (ii).

\textbf{Case 2: $b\geq100a$.} In this case, choose $u^*\in V(H)$ with $z_{u^*}=z^*$. We first establish
\begin{align}
 u^*&\in A,                                               \label{eq:split-center}\\
 d_B(u^*)&\geq(1-3\eta^2)b,                              \label{eq:split-degree}\\
 z_v&\leq\tfrac14z^* &&(\text{ for all $v\in B$}),                         \label{eq:split-Bcoord}\\
 d_B(u)&>\tfrac{11}{20}b
   &&(\text{ for all $u\in A$ with $z_u>\tfrac35z^*$}).               \label{eq:split-large-A}
\end{align}

First we assume $a$ is sufficiently large. If $z^*=z_B^*$, then \eqref{eq:max-B} gives
$z^*\leq(1+O(\eta^2)+o(1))\sqrt{a/(2h)}$. This contradicts
\eqref{eq:lower-A0}, because $b/a\geq100$.  Hence
$u^*\in A$.  Equations \eqref{eq:lower-A0} and \eqref{eq:max-A} also
give
\begin{equation}\label{eq:split-center-size}
 z^*=(1+O(\eta^2)+o(1))\sqrt{\frac{b}{2h}}.
\end{equation}
If \eqref{eq:split-degree} failed, then
\eqref{eq:small-internal-degree} and \eqref{eq:two-step} would give
\(
 \rho'^2z^*
 \leq\bigl((1-\eta^2)ab+o(h)\bigr)z^*<hz^*,
\)
contrary to $\rho'^2>h$. 

Next,
\eqref{eq:max-B}, \eqref{eq:split-center-size}, and $b/a\geq100$
give \eqref{eq:split-Bcoord}. 

Finally, let $u\in A$ with $z_u>3z^*/5$, write $\gamma_u=d_B(u)/b$.  From
\eqref{eq:two-step}, \eqref{eq:small-internal-degree}, and
\eqref{eq:split-Bcoord},
\[
 z_u\leq\bigl(\gamma_u+\tfrac12\eta^2+o(1)\bigr)z^*.
\]
Thus $\gamma_u>11/20$ for small $\eta$ and large $h$, proving
\eqref{eq:split-large-A}.

It remains to justify the same statements if $a$ stays bounded. Then
$b=\Theta(h)$. Note that $\rho'=(1+o(1))\sqrt{h}=(1+o(1))\sqrt{ab}$. Put
\[
 p=\sum_{u\in A}z_u^2,
 \quad
 q=\sum_{v\in B}z_v^2,
 \quad
 M=A(H)-A(K_{A,B}).
\]
Then $p+q=1$, $\rho(M)\leq \sqrt{2D}.$ Using Cauchy--Schwarz inequality in the Rayleigh quotient gives
\begin{align*}
 \rho'=z^TA(H)z&=z^TA(K_{A,B})z+z^TMz\leq 2\left(\sum_{u\in A}z_u\right)\left(\sum_{v\in B}z_v\right)+\rho(M)\\
 &\leq2\sqrt{ap}\sqrt{bq}+\sqrt{2D}
 =(2\sqrt{pq}+o(1))\sqrt{ab}.
\end{align*}
It follows $\rho'=(1+o(1))\sqrt{ab}$ that $p=q=1/2+o(1)$, so $z_A^*$ is bounded below by a
positive constant.  For $v\in B$,
\[
 \rho' z_v
 \leq\sum_{u\in A}z_u+
      \sum_{w\in N(v)\setminus A}z_w
 \leq\sqrt{ap}+\sqrt D,
\]
and therefore $z_v=o(1)$.  This proves
\eqref{eq:split-center}-\eqref{eq:split-Bcoord}.

Now put
\[
 U=N_H(u^*),
 \qquad
 W=V(H)\setminus\bigl(\{u^*\}\cup U\bigr).
\]
\begin{claim}\label{claim4}
There is at most one vertex $u_0\in N_A(u^*)$ with the
property: every edge $uv\in E(H[U])$ not incident with
$u_0$ satisfies
\begin{equation}\label{eq:nonpositive-surplus}
 z_u+z_v\leq z^*.
\end{equation}
\end{claim}
\begin{proof}
If $d_A(u^*)\leq 1$, then the claim holds clearly. In the following, we consider $d_A(u^*)\geq 2.$ For $u\in N_A(u^*)$ set
\[
 Y_u=N_B(u^*)\cap N_B(u).
\]
By \eqref{eq:min-A} and \eqref{eq:split-degree},
$|Y_u|\geq(1/2-3\eta^2)b$. On the other hand, for any two distinct vertices $u,u'$ in 
$N_A(u^*)$, we have $|Y_u\cap Y_{u'}|\leq1$; otherwise a $4$-cycle appears
inside $H[N(u^*)]$, and so $H[N(u^*)\cup\{u^*\}]$ contains a $W_5$, a contradiction.  Inclusion--exclusion now shows that $d_A(u^*)\leq2$, and so $d_A(u^*)=2$. Assume 
$N_A(u^*)=\{u_0,u_1\}$, label them so that
$d_B(u_0)\geq d_B(u_1)$.  The bound
$|Y_{u_0}\cap Y_{u_1}|\leq1$ gives
\[
 d_B(u^*)+d_B(u_0)+d_B(u_1)\leq2b+1,
\]
and hence $d_B(u_1)<11b/20$.  By
\eqref{eq:split-large-A}, $z_{u_1}\leq3z^*/5$.  An edge not incident
with $u_0$ either lies in $B$ or joins $u_1$ to $B$, so the sum of its
endpoint coordinates is at most
$(3/5+1/4)z^*<z^*$.
\end{proof}

For $w\in W$,
define
\[
 f(w)=d_U(w)(z^*-z_w)+\frac12d_W(w)z^*\geq0.
\]
Applying the eigen-equation $A(H){\bf z}=\rho'{\bf z}$ twice at $u^*$ and grouping edges according
to $U$ and $W$ gives
\begin{equation}\label{eq:residual-raw}
 \rho'^2z^*
 =hz^*+\sum_{uv\in E(H[U])}(z_u+z_v-z^*)
       -\sum_{w\in W}f(w).
\end{equation}
Subtracting $\rho' z^*$ yields
\begin{equation}\label{eq:residual}
 (\rho'^2-\rho'-h)z^*
 =\sum_{uv\in E(H[U])}(z_u+z_v-z^*)
  -\rho' z^*-\sum_{w\in W}f(w).
\end{equation}

By Claim~\ref{claim4}, there is at most one vertex $u_0\in N_A(u^*)$ with the
property: every edge $uv\in E(H[U])$ not incident with
$u_0$ satisfies \eqref{eq:nonpositive-surplus}. If there is no such vertex, then
\eqref{eq:nonpositive-surplus} and \eqref{eq:residual} imply
$\rho'^2-\rho'-h\leq-\rho'<-1$, contradicting
\(
 \rho'^2-\rho'-h
 \geq\beta(h)^2-\beta(h)-h=-1.
\)
Thus such vertex $u_0$ exists. 

Now 
\begin{align*}
 \sum_{v\in N_U(u_0)}(z_{u_0}+z_v-z^*)
 &\leq\sum_{v\in N_U(u_0)}z_v\leq\rho' z_{u_0}-z^*
 \leq(\rho'-1)z^*.
\end{align*}
It follows from \eqref{eq:residual} that
\begin{equation}\label{eq:split-defect}
 \rho'^2-\rho'-h\leq-1.
\end{equation}
The reverse inequality follows from $\rho\geq\beta(h)$.  Hence equality
holds in \eqref{eq:split-defect} and in every inequality used to derive
it. In particular,
\begin{equation}\label{eq:equality-data}
 \rho'=\beta(h),
 \qquad z_{u_0}=z^*,
 \qquad N_W(u_0)=\emptyset,
 \qquad f(w)=0~~(\forall w\in W),
\end{equation}
and every edge $uv$ of $H[U]$ not incident with $u_0$ satisfies $z_u+z_v=z^*$.

The equality $f(w)=0$ shows that $W$ is independent. If $W\neq\emptyset$, then the connectedness of $H$ gives $d_U(w)>0$ for every $w\in W$, and
then $f(w)=0$ implies $z_w=z^*$. Comparing the eigen-equations $A(H){\bf z}=\rho'{\bf z}$ at $w$ and $u^*$ gives $N(w)=U$,
contradicting $N_W(u_0)=\emptyset$. Thus $W=\emptyset$.

Comparing the eigen-equations $A(H){\bf z}=\rho'{\bf z}$ at $u_0$ and $u^*$, using
$z_{u_0}=z^*$, shows that $u_0$ is adjacent to every vertex of
$U\setminus\{u_0\}$. Put $S=U\setminus\{u_0\}$. Then $\Delta(H[S])\leq1$, otherwise a vertex of $S$ has two neighbors in $S$, these three vertices together with
$u_0$ would give a $C_4$ in $H[N(u^*)]$, and so $H[N(u^*)\cup\{u^*\}]$ contains a $W_5$, a contradiction.

Now if $E(H[S])\neq\emptyset$, then $E(H[S])$ consists of a matching, and for $uv\in E(H[S])$, one has $z_u+z_v=z^*$. The eigen-equations $A(H){\bf z}=\rho'{\bf z}$ at
$u$ and $v$ are
\[
 \rho' z_u=2z^*+z_v,
 \qquad
 \rho' z_v=2z^*+z_u.
\]
Their difference gives $z_u=z_v=z^*/2$, and substitution then gives
$\rho'=5$, impossible for sufficiently large $h$. Therefore $E(H[S])=\emptyset$, and so
\(
 H=K_2\vee \overline{K_{|S|}}=B_h.
\)
In particular, $h=2|S|+1$ is odd and $\rho'=\beta(h)$. This is
alternative (i).

The two alternatives are exclusive, % Indeed, in a large book all but two vertices have degree two, whereas both parts in alternative (ii) have order $\Theta(\sqrt h)$ and every vertex has more than a $(1-\eta)$-fraction of the opposite part as neighbors.  Thus a book cannot satisfy alternative (ii).  
and so the proof of this lemma is complete.
\end{proof}

For a graph $G'$, define 
\begin{equation}
 \gamma(G'):=e(G')-\bigl(\rho(G')^2-\rho(G')\bigr).
\label{eq:defect}
\end{equation}
Theorem~\ref{thm:FZZ} says that for every $W_5$-free graph $G'$ with $e(G')$ sufficiently
large, one has $\gamma(G')\ge0$. Together with \eqref{eq:book-defect} and \eqref{eq3.1}, one has
\begin{equation}
 0\leq\gamma(G)\leq1.
\label{eq:defect-one}
\end{equation}

Let $H$ be a $\varepsilon$-core of $G$ supplied by Lemma~\ref{lem:book-core}, then $H$ is a connected $W_5$-free $\varepsilon$-core of size $h$ with $h$ sufficiently large and $\rho(H)\ge\beta(h)$, and so exactly one of the alternatives~(i) and (ii) of Lemma~\ref{lem:core-dichotomy} holds for $H$. If the alternative~(i) of Lemma~\ref{lem:core-dichotomy} holds for $H$, then by Lemma~\ref{lem:book-core}, $G=H\cong B_m.$ 

In the rest lemmas of this section, we assume the alternative~(ii) of Lemma~\ref{lem:core-dichotomy} holds for $H$. Let
$V':=V(H)$ and let
\(
 H':=G[V'],\  t:=e(G)-e(H').
\)

%\begin{lemma}
%\label{lem:closure-matchings}
%If the alternative~(ii) of Lemma~\ref{lem:core-dichotomy} holds, then the partition $V'=A\cup B$ supplied by Lemma~\ref{lem:core-dichotomy} satisfies
%\[
% \Delta(H[A])\le1,\qquad \Delta(H[B])\le1.
%\]
%\end{lemma}

%\begin{proof}
%Suppose to the contrary that a vertex $u\in A$ has two neighbors $u_1,u_2$ in $H[A]$. By
%\eqref{eq:high-cross-degree}, the three vertices $u,u_1,u_2$ have at least $(1-3\eta)b\ge2$ common neighbors $v_1,v_2$ in $B$, using only edges of $X$.  Then
%$u_1v_1u_2v_2u_1$ is a $C_4$ in $G[N_G(u)]$, a contradiction.  The argument in $B$ is identical.
%\end{proof}

\begin{lemma}
\label{lem:one-edge-leakage}
The quantity $t\le1$. Further, if 
$t=1$, then the unique edge outside $H'$ is a pendant edge attached to some vertex of $H$.
\end{lemma}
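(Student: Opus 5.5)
The plan is to compare $G$ with $H'=G[V']$ through the defect $\gamma$ of \eqref{eq:defect}. On one side, $H'$ is $W_5$-free with $e(H')\ge h$ large, so Theorem~\ref{thm:FZZ} gives $\gamma(H')\ge0$. On the other side, \eqref{eq:defect-one} gives $\gamma(G)\le1$. Since $e(G)=e(H')+t$, we have
\[
 \gamma(G)=\gamma(H')+t-\bigl[(\rho^2-\rho)-(\rho(H')^2-\rho(H'))\bigr].
\]
It therefore suffices to show that the bracket is $o(1)$ uniformly for $t=O_\varepsilon(1)$; then $t\le 1+o(1)$, and hence $t\le1$. The available bound on $t$ is $t\le m-h=O_\varepsilon(1)$ from Lemma~\ref{lem:book-core}.

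The first step is to bound the Perron coordinates of $G$. I want $x_v=O(m^{-1/4})$ for all $v\in V'$. To get this, I apply Lemma~\ref{lem2.06} with $A=A(H)$ and $\hat A=A(G)$, padding $A(H)$ with zero rows and columns on $V(G)\setminus V'$. This is legitimate because $H$ and $G$ differ in $m-h=O(1)$ edges, so $\rho(\hat A-A)\le\sqrt{2(m-h)}=O(1)$.

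For the spectral gap, $\rho(H)\ge\sqrt h$. For $\lambda_2(H)$, Lemma~\ref{lem2.1} bounds it by $\lambda_2(K_{A,B})=0$ plus the norm of $A(H)-A(K_{A,B})$. Using \eqref{eq:high-cross-degree} and \eqref{eq:core-matchings}, that perturbation consists of two matchings together with a deletion graph of maximum degree at most $\eta b$, so its norm is at most $1+\eta b=O(\eta\sqrt h)$. Hence the gap is at least $(1-O(\eta))\sqrt m$.

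Lemma~\ref{lem2.06} then yields $\|{\bf x}-{\bf z}\|=O(m^{-1/2})$, with ${\bf z}$ extended by zeros. Combined with $z_v=O(h^{-1/4})$ from Lemma~\ref{lem:core-dichotomy}(ii), this gives $x_v=O(m^{-1/4})$ on $V'$ and $x_w=O(m^{-1/2})$ off $V'$.

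Next I bootstrap off $V'$. A vertex $w\notin V'$ has all its incident edges among the $t$ outside edges, so $d(w)\le t=O(1)$. The eigen-equation $\rho x_w=\sum_{y\sim w}x_y$ then gives $x_w=O(m^{-1/4})/\rho=O(m^{-3/4})$. Consequently every edge $uv\in E(G)\setminus E(H')$ has $x_ux_v=O(m^{-1})$.

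By the Rayleigh quotient,
\[
 \rho={\bf x}^TA(G){\bf x}\le \rho(H')+2\sum_{uv\notin E(H')}x_ux_v=\rho(H')+O(t/m).
\]
Also $\rho\ge\rho(H')$. Hence the bracket is at most $(2\rho)\cdot O(t/m)=O(t/\sqrt m)=o(1)$, which proves $t\le1$.

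Now suppose $t=1$ and let $uw$ be the unique edge outside $H'$. Both endpoints cannot lie in $V'$, since then $uw$ would belong to $H'=G[V']$. Both endpoints cannot lie outside $V'$ either: $G$ is connected and $V'$ carries edges, so some edge must join $V'$ to $V(G)\setminus V'$, and that edge would be a second edge outside $H'$. So $u\in V'$ and $w\notin V'$. Moreover $w$ meets no other edge, and $G$ has no isolated vertices, so $V(G)=V'\cup\{w\}$ and $d(w)=1$. Thus $uw$ is a pendant edge attached to $u\in V(H)$.

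I expect the main obstacle to be the coordinate bound on $V'$, specifically securing a spectral gap of order $\sqrt m$ for $H$ so that Lemma~\ref{lem2.06} applies. If the perturbation estimate for $\lambda_2(H)$ turns out to be delicate, the fallback is to rerun the two-step eigen-equation argument of Claim~\ref{claim3} directly for ${\bf x}$ in $G$, treating the $O(1)$ extra edges as an $O(1)\cdot z^*$ error term.
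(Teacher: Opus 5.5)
Your proof is correct. Its second half matches the paper's: the Rayleigh-quotient bound $\rho\le\rho(H')+O(t/m)$, the defect comparison $\gamma(G)\ge t-o(1)$ against $\gamma(G)\le 1$, and the connectivity argument for $t=1$. Your treatment of $t=1$ is more explicit than the paper's, since you justify why the outside edge has exactly one endpoint in $V'$ and why its other endpoint has degree one.

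The genuine difference is how you get $\max_v x_v=O(m^{-1/4})$. You transfer the bound $z_v=O(h^{-1/4})$ from Lemma~\ref{lem:core-dichotomy}(ii) from $H$ to $G$ using the perturbation Lemma~\ref{lem2.06}. This forces you to establish the spectral gap $\lambda_1(H)-\lambda_2(H)=\Theta(\sqrt m)$, and your Weyl-inequality estimate $\lambda_2(H)\le 1+\eta b$ does this correctly.

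The paper avoids all of that with a direct argument:
\begin{itemize}
\item A vertex outside $V'$ cannot carry the maximum coordinate $x^*$, because its degree is at most $t=O(1)$ and that would force $\rho\le t$.
\item So $x^*$ is attained at some $u^*\in V'$, whose degree is at most $b+1+t=O(\sqrt m)$ by the bipartite-plus-matchings structure.
\item Cauchy--Schwarz on the eigen-equation then gives $\rho x^*\le\sqrt{d_G(u^*)}$, i.e. $x^*=O(m^{-1/4})$.
\end{itemize}

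The paper's route is more elementary: it needs only a degree bound, not a spectral gap or the Perron-vector estimate for $H$. Your route gives the stronger statement $\|{\bf x}-{\bf z}\|=O(m^{-1/2})$, which is essentially the perturbation argument the paper uses later in Lemma~\ref{lem:uniform-localization}. That extra strength is not needed for this lemma, and neither is your fallback.
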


\begin{proof}
Lemma~\ref{lem:book-core} gives $t=O_\varepsilon(1)$. Let $x^*=\max_{v\in V(G)}x_v$. A vertex $w$ outside $V'$ has degree at most $t=O(1)$, so $x_w\neq x^*$, since otherwise the
eigen-equation $A(G){\bf x}=\rho {\bf x}$ would give 
$$
\rho x_w=\sum_{u\sim w}x_u\le d_G(w)x_w\le tx_w,
$$
and so $\rho\le t=O_\varepsilon(1)$, a contradiction.

Thus a vertex $u^\ast$ in $V'$ satisfies with $x_{u^\ast}=x^*$. By Lemma~\ref{lem:core-dichotomy}(ii), its degree is
$O(\sqrt {e(H')})=O(\sqrt m)$. By eigen-equation $A(G){\bf x}=\rho {\bf x}$ and Cauchy--Schwarz inequality, one has
\begin{align*}
\rho x^*=\sum_{v\sim u^\ast}x_v\leq \sqrt{d_G(u^\ast)}\left(\sum_{v\sim u^\ast}x_v^2\right)^{1/2}\leq \sqrt{d_G(u^\ast)},
\end{align*}
and so
\begin{align}
 x^*\leq \frac{\sqrt{d_G(u^\ast)}}{\rho}=\frac{O(m^{1/4})}{\Omega(m^{1/2})}=O(m^{-1/4}).
\label{eq:max-coordinate-G}
\end{align}
Every vertex $w\notin V'$ then satisfies
\begin{equation}
 x_w\le\frac{d_G(w)x^*}{\rho(G)}=O(m^{-3/4}).
\label{eq:outer-coordinate}
\end{equation}

Writing $x_{V'}$ for the restriction of ${\bf x}$ to $V'$, the Rayleigh quotient
and \eqref{eq:max-coordinate-G}--\eqref{eq:outer-coordinate} give
\begin{align*}
 \rho\le x_{V'}^{T}A(H')x_{V'}+O(m^{-1})\le\rho(H')\lVert x_{V'}\rVert^2+O(m^{-1})
 \le\rho(H')+O(m^{-1}).
\end{align*}
Consequently,
\begin{equation}
 \bigl(\rho^2-\rho\bigr)
 -\bigl(\rho(H')^2-\rho(H')\bigr)=O(m^{-1/2}).
\label{eq:defect-gain-small}
\end{equation}
Note that $H'$ is $W_5$-free, by Theorem~\ref{thm:FZZ}, $\gamma(H')\ge0$. Hence
\begin{align}\label{eq3.026}
 \gamma(G)=\gamma(H')+t-O(m^{-1/2})\ge t-o(1).
\end{align}
Together with \eqref{eq:defect-one}, this forces $t\le1$.

If $t=1$, connectedness implies that the unique edge not contained in
$H'$ joins one vertex of $V'$ to one vertex outside $V'$. It is therefore
a pendant edge.
\end{proof}

%\section{Defect quantization in the balanced core}

%We now treat the case $t=0$, so $G=H$.  The core $X$ and $G$ have the same vertex set and differ in only $O(1)$ edges.  This lets us transfer the lower Perron-coordinate estimates from $X$ to $G$.

\begin{lemma}%[Uniform Perron localization]
\label{lem:uniform-localization}
Let ${\bf y}$ be the Perron vector of $H'$, then
\begin{equation}
 y_u\ge(1-o(1))\sqrt{\frac{b}{2m}}\quad(u\in A),
 \qquad
 y_v\ge(1-o(1))\sqrt{\frac{a}{2m}}\quad(v\in B).
\label{eq:lower-coordinates-G}
\end{equation}
%Here the error can be made arbitrarily small by first choosing the constant hierarchy in Lemma~\ref{lem:core-dichotomy} sufficiently strong and then taking $m$ sufficiently large.
\end{lemma}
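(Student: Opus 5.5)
The plan is to use the Perron equations of $H'$ directly, together with the structure of $H$ from Lemma~\ref{lem:core-dichotomy}(ii). Write $V'=A\cup B$, $a=|A|$, $b=|B|$. Since $H$ is an edge-induced subgraph of $H'=G[V']$ on the same vertex set and $e(H')\le m$, we have $e(H')-e(H)\le m-h=O_\varepsilon(1)$. So $H'$ inherits all of (ii). Concretely, every $u\in A$ has $d_B(u)>(1-\eta)b$, every $v\in B$ has $d_A(v)>(1-\eta)a$, and $a,b=\Theta(\sqrt m)$ with $ab=(1+o(1))m$. We also have $\rho(H')\ge\rho(H)\ge\beta(h)>\sqrt h=(1-o(1))\sqrt m$, while Theorem~\ref{thm:FZZ} gives $\rho(H')\le g^*(m)=(1+o(1))\sqrt m$.

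The first step is to bound the coordinates of ${\bf y}$ from above. We have $d_{H'}(w)=O(\sqrt m)$ for every $w\in V'$, because the internal degrees are at most $1$ in $H$ and only $O(1)$ extra edges are added. The Cauchy--Schwarz argument used in \eqref{eq:max-coordinate-G} then gives $y^*:=\max y_w=O(m^{-1/4})$.

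Next comes a two-step eigen-equation argument. Set $Y_A=\min_{u\in A}y_u$ and $Y_B=\min_{v\in B}y_v$. For $u\in A$ the eigen-equation gives $\rho(H')y_u\ge\sum_{v\in N_B(u)}y_v\ge(1-\eta)bY_B$, and symmetrically $\rho(H')y_v\ge(1-\eta)aY_A$. This yields only $Y_A\ge(1-\eta)^2(ab/\rho(H')^2)Y_A$, which is circular. So I would use the mass distribution instead. Let $S_A=\sum_{u\in A}y_u$ and $S_B=\sum_{v\in B}y_v$. For $u\in A$,
\[
\rho(H')y_u\ge S_B-\sum_{v\in B\setminus N(u)}y_v\ge S_B-\eta b\,y^*.
\]
The key point is that the missing mass is small relative to $S_B$. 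I would obtain this by controlling $S_A$ and $S_B$ through the Rayleigh quotient, as in the bounded-$a$ part of the proof of Lemma~\ref{lem:core-dichotomy}. Writing $p=\sum_A y_u^2$ and $q=\sum_B y_v^2$, we get $\rho(H')\le 2S_AS_B+O(\sqrt{D'})$, where $D'$ counts the non-$K_{A,B}$ discrepancies. Combined with $S_A\le\sqrt{ap}$, $S_B\le\sqrt{bq}$ and $\rho(H')\ge(1-o(1))\sqrt{ab}$, this forces $p,q=\tfrac12+o(1)$ and $S_A=(1-o(1))\sqrt{a/2}$, $S_B=(1-o(1))\sqrt{b/2}$.

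Then $\eta b\,y^*=O(\eta\, b\, m^{-1/4})=O(\eta)\sqrt b$, which is an $O(\eta)$ fraction of $S_B$. Hence
\[
y_u\ge(1-O(\eta)-o(1))\frac{\sqrt{b/2}}{\rho(H')}=(1-O(\eta)-o(1))\sqrt{\frac{b}{2m}},
\]
and the symmetric bound holds for $v\in B$.

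The main obstacle is that this gives a loss of $O(\eta)$ rather than $o(1)$. Since $\eta$ is a fixed constant of the hierarchy \eqref{eq:hierarchy}, we can make it as small as we like by choosing it small first, so I would read the $o(1)$ in \eqref{eq:lower-coordinates-G} in that sense. To get a genuine $o(1)$ instead, I would bootstrap: feed the lower bounds back into the two-step equation to show that every vertex misses only $o(b)$ (respectively $o(a)$) cross neighbours. A deficiency of $\delta b$ at some vertex costs about $\delta b\sqrt{a/(2m)}$ in the defect identity $\gamma(H')=O(1)$ via \eqref{eq:two-step}-type counting, and this would force $\delta=o(1)$. Rerunning the estimate above with $o(1)$ in place of $\eta$ then gives the claimed bound.
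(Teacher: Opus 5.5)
Your argument is correct, but it takes a different route from the paper.

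\textbf{The paper's route.} The paper never works with the eigen-equations of $H'$. It uses Lemma~\ref{lem2.1} to get the spectral gap $\lambda_1(A(H))-\lambda_2(A(H))=\Theta(\sqrt m)$. It then applies Lemma~\ref{lem2.06}, using that $H'$ and $H$ differ in $O_\varepsilon(1)$ edges, to get $\|{\bf y}-{\bf z}\|=O(m^{-1/2})$ for the Perron vectors of $H'$ and $H$. Finally it takes the lower bounds for ${\bf z}$ from the core property Lemma~\ref{lem2.3}(ii) together with \eqref{eq:high-cross-degree}. An $\ell_2$ error of order $m^{-1/2}$ does not matter at the coordinate scale $m^{-1/4}$.

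\textbf{Your route.} You work on $H'$ directly. The Rayleigh bound $\rho(H')\le 2S_AS_B+O(1)$, compared with $\rho(H')\ge(1-o(1))\sqrt{ab}$, forces $p,q=\frac12+o(1)$ and pushes $S_A,S_B$ up to their Cauchy--Schwarz maxima. One eigen-equation step with $y^*=O(m^{-1/4})$ then absorbs the fewer than $\eta b$ missing cross neighbours.

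\textbf{Comparison.} Your route avoids the spectral gap, the perturbation lemma, and the $\varepsilon$-core property, which $H'$ itself need not have. It uses only the degree structure inherited from Lemma~\ref{lem:core-dichotomy}(ii) and $\rho(H')=(1+o(1))\sqrt m$. The paper's route is shorter given its toolkit, and it gives the stronger conclusion that ${\bf y}$ is globally $\ell_2$-close to ${\bf z}$.

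\textbf{Two small points.}
\begin{enumerate}
\item In the Rayleigh step you can simply drop the missing cross edges, whose contribution is nonpositive because ${\bf y}\ge 0$. The error term is then $O(1)$. If you keep $\sqrt{D'}$, you need $D'=o(m)$, which follows from $ab=(1+o(1))h$.
\item Your $O(\eta)$ loss is exactly what the paper also incurs. Lemma~\ref{lem2.3}(ii) only gives $z_u^2\ge(1-2\varepsilon)(1-\eta)\frac{b}{2h}$, so the paper's $o(1)$ likewise means \emph{arbitrarily small once the hierarchy \eqref{eq:hierarchy} is fixed}. That is enough for the integrality arguments that follow.
\end{enumerate}
Your bootstrap paragraph is therefore unnecessary, and as written it is only a sketch rather than an argument.
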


\begin{proof}  
Since the alternative~(ii) of Lemma~\ref{lem:core-dichotomy} holds for $H$, \eqref{eq:balanced-sizes} and \eqref{eq:core-matchings} imply the adjacency matrix of $H$ differs from that of $K_{a,b}$ in $o(m)$ entries. Then $\lambda_1(A(H)-A(K_{a,b}))=o(\sqrt{m})$. By Lemma~\ref{lem2.1},
\[
 \lambda_2(A(H))\leq\lambda_1(A(H)-A(K_{a,b}))+\lambda_2(A(K_{a,b}))=o(\sqrt{m}).
\]
Together with $\lambda_1(A(H))=\rho (H)=\Theta(\sqrt{m})$, one has 
\begin{align}\label{eq3.010}
\lambda_1(A(H))-\lambda_2(A(H))=\Theta(\sqrt{m}).
\end{align}
On the other hand, %since $t=0,$ one has $V(G)=V(H).$ 
Lemmas~\ref{lem:book-core} and \ref{lem:one-edge-leakage} give $e(H')-h=O_\varepsilon(1),$ and so the adjacency matrix of $H'$ differs from that of $H$ in $O_\varepsilon(1)$ entries. By Lemma~\ref{lem2.06} and \eqref{eq3.010}, the unit
Perron vectors of $H$ and $H'$ differ in Euclidean norm by
$O(m^{-1/2})$. For the Perron vector ${\bf z}$ of $H$, Lemma~\ref{lem2.3} and
\eqref{eq:high-cross-degree} give
\[
 \text{$z_u\ge(1-o(1))\sqrt{\frac{b}{2m}}$ for $u\in A,$ 
 $z_v\ge(1-o(1))\sqrt{\frac{a}{2m}}$ for $v\in B$.}
\]
Since the main terms $\sqrt{\frac{b}{2m}}$ and $\sqrt{\frac{a}{2m}}$ have order $m^{-1/4}$, the
$O(m^{-1/2})$ perturbation from ${\bf z}$ to ${\bf y}$ is negligible, and so 
$$
y_u\ge(1-o(1))\sqrt{\frac{b}{2m}}\quad(u\in A),
 \qquad
 y_v\ge(1-o(1))\sqrt{\frac{a}{2m}}\quad(v\in B).
$$
This completes the proof of this lemma.
\end{proof}

Let
\[
 r_A=e(H'[A]),\qquad r_B=e(H'[B]),
\]
and define
\begin{equation}
 c:=ab-e_{H'}(A,B),\qquad
 s_A:=\lfloor a/2\rfloor-r_A,\qquad
 s_B:=\lfloor b/2\rfloor-r_B.
\label{eq:completion-deficiencies}
\end{equation}
Extend the matchings in $H'[A]$ and $H'[B]$ to maximum matchings and add every
missing edge between $A$ and $B$ in $H'$. Denote the resulting graph by $\widehat G$.

\begin{lemma}%[Completion inequality]
\label{lem:completion-inequality}
It holds that
\begin{align}
 \gamma(H')\ge\gamma(\widehat G)+(1-o(1))c
+\bigl(2\sqrt{b/a}-1-o(1)\bigr)s_A
  +\bigl(2\sqrt{a/b}-1-o(1)\bigr)s_B.
\label{eq:completion-inequality}
\end{align}
\end{lemma}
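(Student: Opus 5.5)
Since $H'$ is a spanning subgraph of $\widehat G$ on the vertex set $V'$, I would write
\[
 \gamma(H')-\gamma(\widehat G)=-(e(\widehat G)-e(H'))+\bigl(\rho(\widehat G)^2-\rho(\widehat G)\bigr)-\bigl(\rho(H')^2-\rho(H')\bigr),
\]
with $e(\widehat G)-e(H')=c+s_A+s_B$. This reduces \eqref{eq:completion-inequality} to a lower bound on the spectral gain $\rho(\widehat G)-\rho(H')$. The gain should be roughly $(1-o(1))\rho^{-1}$ per added cross edge, and roughly $\rho^{-1}\sqrt{b/a}$ or $\rho^{-1}\sqrt{a/b}$ per added matching edge in $A$ or in $B$. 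Multiplying by $2\rho-1=(2-o(1))\rho$, the derivative of $x^2-x$, gives the coefficients $2-o(1)$, $2\sqrt{b/a}$ and $2\sqrt{a/b}$. Subtracting the edge count leaves exactly $1-o(1)$, $2\sqrt{b/a}-1$ and $2\sqrt{a/b}-1$.

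The gain is bounded below by the Rayleigh quotient. Let ${\bf y}$ be the unit Perron vector of $H'$. Then
\[
 \rho(\widehat G)\ge {\bf y}^T A(\widehat G){\bf y}=\rho(H')+2\sum_{uv\in E(\widehat G)\setminus E(H')}y_uy_v .
\]
By Lemma~\ref{lem:uniform-localization}, a new cross edge $uv$ with $u\in A$, $v\in B$ has
\[
 y_uy_v\ge(1-o(1))\frac{\sqrt{ab}}{2m}=\frac{1-o(1)}{2\sqrt m},
\]
using $ab=(1+o(1))m$ from \eqref{eq:balanced-sizes}. A new matching edge inside $A$ has $y_uy_v\ge(1-o(1))\,b/(2m)=(1-o(1))\sqrt{b/a}/(2\sqrt m)$, and the analogous bound holds in $B$. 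Summing gives
\[
 \rho(\widehat G)-\rho(H')\ge\frac{1-o(1)}{\sqrt m}\Bigl(c+\sqrt{b/a}\,s_A+\sqrt{a/b}\,s_B\Bigr).
\]
Since $x\mapsto x^2-x$ is increasing with derivative at least $2\rho(H')-1=(2-o(1))\sqrt m$ on $[\rho(H'),\rho(\widehat G)]$, the mean value theorem turns this into the claimed inequality.

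The main obstacle is that Lemma~\ref{lem:uniform-localization} needs $ab=(1+o(1))m$ and $\rho(H')=(1+o(1))\sqrt m$, and it relies on $H'$ differing from $H$ in only $O_\varepsilon(1)$ edges. Here $c$, $s_A$ and $s_B$ might a priori be as large as $\Theta(\eta m)$ or $\Theta(\sqrt m)$, but this does not matter. The Rayleigh bound only uses the fixed vector ${\bf y}$ of $H'$, so no perturbation of eigenvectors is required as $c$ grows. The one remaining point is $\rho(\widehat G)=O(\sqrt m)$, so that the derivative bound uses $\rho(H')$ from below and needs no upper control. Everything therefore goes through, and the $o(1)$ terms are uniform because they come only from the uniform localization \eqref{eq:lower-coordinates-G}. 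If $a$ or $b$ is odd, the maximum matching leaves one vertex unmatched, which changes nothing above.
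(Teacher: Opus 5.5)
Your proposal is correct and follows the paper's proof essentially step for step. Both use the Rayleigh quotient with the Perron vector ${\bf y}$ of $H'$ to get $\rho(\widehat G)\ge\rho(H')+q$, bound each added edge's contribution to $q$ via Lemma~\ref{lem:uniform-localization}, and convert the spectral gain into the defect inequality through the factor $2\rho(H')-1$. Your aside that one also needs $\rho(\widehat G)=O(\sqrt m)$ is unnecessary, since only the lower bound $2x-1\ge 2\rho(H')-1$ on $[\rho(H'),\rho(\widehat G)]$ is used.
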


\begin{proof}
Let $\mathcal M$ be the set of edges added to $H'$ to obtain
$\widehat G$. Put
\[
 q:=2\sum_{uv\in\mathcal M}x_ux_v.
\]
The Rayleigh quotient gives $\rho(\widehat G)\ge\rho(H')+q$, and so %Since the function $f(x)= x^2-x$ is increasing in $[\frac{1}{2},+\infty)$,
\begin{align}\notag
 \gamma(H')-\gamma(\widehat G)
 &=-|\mathcal M|+
 \bigl(\rho(\widehat G)^2-\rho(\widehat G)\bigr)
 -\bigl(\rho(H')^2-\rho(H')\bigr)\\\notag
 &\ge-|\mathcal M|+(2\rho(H')-1)q\\\label{eq3.011}
 &=-c-s_A-s_B+(2\rho(G)-1)q.
\end{align}

Let $\mathcal M_{AB},\,\mathcal M_{A}$ and $\mathcal M_{B}$ be the set of edges in $\mathcal M$ contained in $\widehat G[A,B],\,\widehat G[A]$ and $\widehat G[B]$, respectively. Then by \eqref{eq:lower-coordinates-G},
\begin{align}\notag
q&=2\sum_{uv\in\mathcal M_{AB}}y_uy_v+2\sum_{uv\in\mathcal M_{A}}y_uy_v
+2\sum_{uv\in\mathcal M_{B}}y_uy_v \\ \notag
&\geq 2c(1-o(1))\sqrt{\frac{b}{2m}}\sqrt{\frac{a}{2m}}+2s_A(1-o(1))\frac{b}{2m}
+2s_B(1-o(1))\frac{a}{2m}\\\label{eq3.012}
&=c(1-o(1))\frac{\sqrt{ab}}{m}+s_A(1-o(1))\frac{b}{m}
+s_B(1-o(1))\frac{a}{m}.
\end{align}
Note that $\rho(H')=(1+o(1))\rho(G)=(1+o(1))\sqrt m$, $ab=(1+o(1))m$. Together with \eqref{eq3.011} and \eqref{eq3.012}, one has 
\begin{align}\notag
 \gamma(H')-\gamma(\widehat G)&\geq-c-s_A-s_B+(2-o(1))c+\left(2\sqrt{\frac{b}{a}}-o(1)\right)s_A
 +\left(2\sqrt{\frac{a}{b}}-o(1)\right)s_B \\ \notag
 &=(1-o(1))c+\left(2\sqrt{\frac{b}{a}}-1-o(1)\right)s_A
 +\left(2\sqrt{\frac{a}{b}}-1-o(1)\right)s_B,
\end{align}
as desired.
\end{proof}
\begin{lemma}\label{lem3.001}
For $0\le p\le\lfloor a/2\rfloor$ and $0\le q\le\lfloor b/2\rfloor$ with $\alpha=\frac a2-p=O(1)$ and $\eta=\frac b2-q=O(1)$, one has
\begin{equation}
 \gamma(R(a,b;p,q))=
 \frac{(\sqrt a-\sqrt b)^2}{2}
 +(2\sqrt{b/a}-1)\alpha+(2\sqrt{a/b}-1)\eta+O((ab)^{-1/2}).
\label{eq:R-defect-expansion}
\end{equation}
\end{lemma}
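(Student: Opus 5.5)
We compute $\gamma(R)$ directly from the characteristic equation in Lemma~\ref{lem:R-equation}.

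\textbf{Step 1: the number of edges.} Write $\rho=\rho(R(a,b;p,q))$ and $N=ab$. The graph has
\[
e=ab+p+q=ab+\frac{a+b}{2}-\alpha-\eta
\]
edges. By \eqref{eq:R-equation}, $\rho^2(\rho-1)^2=(a\rho-2\alpha)(b\rho-2\eta)$. Since $\rho\ge\sqrt{ab}>1$, both factors on the right are positive, and taking square roots gives
\[
\rho^2-\rho=\sqrt{(a\rho-2\alpha)(b\rho-2\eta)}=\rho\sqrt{ab}\,\sqrt{\Bigl(1-\frac{2\alpha}{a\rho}\Bigr)\Bigl(1-\frac{2\eta}{b\rho}\Bigr)}.
\]

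\textbf{Step 2: first-order expansion.} Because $\alpha,\eta=O(1)$ and $\rho\asymp\sqrt{ab}$, we have $\frac{2\alpha}{a\rho},\frac{2\eta}{b\rho}=O\bigl(1/(a\sqrt{ab})\bigr)$. Expanding $\sqrt{1-u}=1-u/2+O(u^2)$ gives
\[
\rho^2-\rho=\rho\sqrt{ab}-\alpha\sqrt{b/a}-\eta\sqrt{a/b}+O\bigl(\rho\sqrt{ab}\,u^2\bigr).
\]
The error term needs care. It is $O(\alpha^2/(a^2))\cdot O(b/\ldots)$, and in the balanced regime $a\asymp b$ it is $O(1/(ab))$, which is negligible. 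In general I would write it as $O((ab)^{-1/2})$ using $b<100a$ from Lemma~\ref{lem:core-dichotomy}. If the lemma is meant without that restriction, I would bound it by $O(\alpha^2 b/(a^2\rho))$, which is still $O((ab)^{-1/2})$ whenever $a$ is at least a constant fraction of $b^{1/3}$. I would state the hypothesis $a\asymp b$ explicitly, since that is the only case in which the lemma is used.

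\textbf{Step 3: solve for $\rho$ to sufficient accuracy.} Dividing by $\rho$ gives
\[
\rho-1=\sqrt{ab}-\frac{\alpha\sqrt{b/a}+\eta\sqrt{a/b}}{\rho}+O(\cdot),
\]
so $\rho=1+\sqrt{ab}+O((ab)^{-1/2})$. Substituting this back,
\[
\rho\sqrt{ab}=ab+\sqrt{ab}+O(1)\cdot\frac{\sqrt{ab}}{\sqrt{ab}}.
\]
This is the main obstacle. The correction term $-\sqrt{ab}\,(\alpha\sqrt{b/a}+\eta\sqrt{a/b})/\rho$ is $O(1)$ rather than small, and it equals $-\alpha\sqrt{b/a}-\eta\sqrt{a/b}+O((ab)^{-1/2})$. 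So I will track it exactly rather than absorb it. This gives
\[
\rho^2-\rho=ab+\sqrt{ab}-2\alpha\sqrt{b/a}-2\eta\sqrt{a/b}+O((ab)^{-1/2}).
\]

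\textbf{Step 4: combine.} Subtracting from $e$,
\[
\gamma=\frac{a+b}{2}-\sqrt{ab}-\alpha-\eta+2\alpha\sqrt{b/a}+2\eta\sqrt{a/b}+O((ab)^{-1/2}).
\]
Since $\frac{a+b}{2}-\sqrt{ab}=\frac{(\sqrt a-\sqrt b)^2}{2}$, this is exactly \eqref{eq:R-defect-expansion}.

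The error bookkeeping in Steps 2 and 3 is where I expect the difficulty. To avoid expansions, I would alternatively set $\rho=1+\sqrt{ab}-\delta$, substitute into $\rho^2(\rho-1)^2-(a\rho-2\alpha)(b\rho-2\eta)=0$, and solve the resulting equation for $\delta$ at first order. That route yields the same two contributions $\alpha\sqrt{b/a}/\sqrt{ab}$ and $\eta\sqrt{a/b}/\sqrt{ab}$, each doubled when passing from $\rho$ to $\rho^2-\rho$.
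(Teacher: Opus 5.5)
Your proposal is correct and follows essentially the same route as the paper's proof. You take the square root of the defining equation and expand to get $\rho(\rho-1)=\sqrt{ab}\,\rho-\alpha\sqrt{b/a}-\eta\sqrt{a/b}+O((ab)^{-1/2})$, then solve for $\rho$ and substitute back to get $\rho^2-\rho=ab+\sqrt{ab}-2\bigl(\alpha\sqrt{b/a}+\eta\sqrt{a/b}\bigr)+O((ab)^{-1/2})$, and subtract this from $e=ab+\frac{a+b}{2}-\alpha-\eta$.

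You are also right that the stated error term needs a balance hypothesis. The paper uses it tacitly: $b<100a$ holds wherever the lemma is applied. Note, however, that the condition actually required is $b=O(a)$, not merely $a$ of order at least $b^{1/3}$. Replacing $\sqrt{ab}/\rho$ by $1$ in your Step 3 costs $O(\alpha/a+\eta/b)$, and this is genuinely not $O((ab)^{-1/2})$ when $a$ is small. For example, $a=3$, $p=1$, $\eta=0$ gives an error tending to $-1/9$.
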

\begin{proof}
Note that 
\begin{align}\label{eq3.012}
e(R(a,b;p,q))=ab+p+q=ab+\frac{a+b}{2}-\alpha-\eta,
\end{align}
and by Lemma~\ref{lem:R-equation}, $\rho':=\rho(R(a,b;p,q))$ is the largest root of 
$x^2(x-1)^2=(ax-2\alpha)(bx-2\eta).$

Then 
\begin{align}\label{eq3.013}
\rho'(\rho'-1)=\sqrt{(a\rho'-2\alpha)(b\rho'-2\eta)}
=\sqrt{ab}\rho'-\frac{a\eta+b\alpha}{\sqrt{ab}}+O((ab)^{-1/2}).
\end{align}
Since $\rho'=(1+o(1))\sqrt{ab},$ \eqref{eq3.013} gives
$$
\rho'=\sqrt{ab}+1-\frac{a\eta+b\alpha}{(\sqrt{ab}+1)\sqrt{ab}}+O((ab)^{-1}).
$$
Together this with \eqref{eq3.012}, one has
\begin{align*}
\gamma(R(a,b;p,q))&=e(R(a,b;p,q))-\rho'(\rho'-1)\\
&=ab+\frac{a+b}{2}-\alpha-\eta-\left(ab+\sqrt{ab}
-\frac{(2\sqrt{ab}+1)(a\eta+b\alpha)}{(\sqrt{ab}+1)\sqrt{ab}}+O((ab)^{-1/2})\right)\\
&=\frac{(\sqrt a-\sqrt b)^2}{2}
 +(2\sqrt{b/a}-1)\alpha+(2\sqrt{a/b}-1)\eta+O((ab)^{-1/2}),
\end{align*}
as desired.
\end{proof}

\begin{lemma}\label{lem3.011}
It holds that $\frac ba=1+o(1).$
\end{lemma}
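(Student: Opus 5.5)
Combine the completion inequality (Lemma~\ref{lem:completion-inequality}) with the defect expansion (Lemma~\ref{lem3.001}) and the global bound \eqref{eq:defect-one}. The first task is to place $\widehat G$ in the family of Lemma~\ref{lem:R-equation}. After completion, $\widehat G$ consists of $K_{a,b}$ together with a maximum matching in $A$ and one in $B$. Hence $\widehat G\cong R(a,b;\lfloor a/2\rfloor,\lfloor b/2\rfloor)$, with $\alpha,\eta\in\{0,1/2\}$ according to the parities of $a,b$. By Lemma~\ref{lem3.001},
\[
 \gamma(\widehat G)=\frac{(\sqrt a-\sqrt b)^2}{2}+(2\sqrt{b/a}-1)\alpha+(2\sqrt{a/b}-1)\eta+O(m^{-1/2}).
\]

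Next, chain the inequalities. By \eqref{eq3.026}, $\gamma(G)\ge\gamma(H')+t-o(1)$, and by \eqref{eq:defect-one}, $\gamma(G)\le1$, so $\gamma(H')\le1+o(1)$. Since $b<100a$, the coefficients $2\sqrt{b/a}-1\ge1$ and $2\sqrt{a/b}-1\ge 2/10-1$ are bounded. The term $(2\sqrt{a/b}-1)s_B$ could be negative, so it must be controlled. We have $s_B\le \lfloor b/2\rfloor$, but we also need $s_B=O(1)$. This follows from $c,s_A,s_B\le e(\widehat G)-e(H')$ and $e(\widehat G)-e(H')=O(1)$. The latter holds because $e(H')\ge m-1$, and a crude bound $\rho(\widehat G)\le g^*(e(\widehat G))$ together with the Rayleigh gain $q\gtrsim |\mathcal M|/\sqrt m\cdot\min(\sqrt{a/b},1)$ forces $|\mathcal M|=O(1)$. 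If needed, one can instead argue directly from \eqref{eq:completion-inequality}: regrouping gives a lower bound that is at least $\tfrac{1}{2}$ of each deficiency, because $2\sqrt{a/b}-1$ is only negative when $b>4a$.

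The key point is the main term. Substituting everything gives
\[
 1+o(1)\ \ge\ \frac{(\sqrt a-\sqrt b)^2}{2}+(2\sqrt{a/b}-1)(\eta+s_B)+(\text{nonnegative terms})-o(1).
\]
Since $\eta+s_B=O(1)$ and the coefficient is bounded below by $-1$, this yields $(\sqrt b-\sqrt a)^2=O(1)$. Hence $\sqrt b-\sqrt a=O(1)$ while $\sqrt a=\Theta(m^{1/4})$, so $\sqrt{b/a}=1+O(m^{-1/4})$ and therefore $b/a=1+o(1)$.

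The main obstacle is the possibly negative coefficient of $s_B$ when $b$ is much larger than $a$. My first attempt would be to bound $s_B$ by $O(1)$ using $|\mathcal M|=O(1)$, obtained from the Rayleigh gain $(2\rho-1)q$ in \eqref{eq3.011} together with $\gamma(\widehat G)\ge0$ from Theorem~\ref{thm:FZZ}. Each added edge contributes at least $(1-o(1))\min(1,2\sqrt{a/b})-1$, which is insufficient if $b>4a$. In that regime I would instead use the cruder observation that a deficiency $s_B$ loses $s_B$ edges while changing $\rho$ by only $O(s_B\, a/m)$. From this, $\gamma(H')\ge \gamma(\widehat G)-s_B$ still follows, and $(\sqrt b-\sqrt a)^2/2=\Omega(\sqrt m)$ when $b>4a$ would contradict it unless $s_B=\Omega(\sqrt m)$. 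That case must then be excluded separately, using $\gamma\ge0$ applied to the graph with $B$-matching removed.
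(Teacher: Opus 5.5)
\textbf{There is a genuine gap in the regime $4a\le b<100a$.} Your argument works when $b<4a$. There every coefficient in \eqref{eq:completion-inequality} is at least $-o(1)$, so even a large $s_B$ only costs $o(1)\,s_B=o(\sqrt m)$, and $\gamma(H')\le 1+o(1)$ still forces $b/a=1+o(1)$.

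For $4a\le b<100a$, the coefficient $2\sqrt{a/b}-1$ of $\eta+s_B$ is negative. Your ``key point'' then needs $s_B=O(1)$, and the justification you give fails exactly there. The Rayleigh lower bound on the gain in $\rho^2-\rho$ from one added $B$-edge is only about $2\sqrt{a/b}\le 1$. So $\gamma(\widehat G)\ge 0$ together with $\gamma(H')\le 1+o(1)$ places no bound on $s_B$, as you note yourself in your last paragraph.

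Your fallback does not close the case either. Applying $\gamma\ge0$ to $H'$ minus its $B$-matching yields at best $\gamma(H')\gtrsim(1-2\sqrt{a/b})\,r_B$. This is vacuous when $r_B$ is small, which is exactly the situation $s_B\approx b/2$ you need to exclude. For instance, $K_{a,b}$ plus a perfect matching in $A$ has defect exactly $a/2$, since by Lemma~\ref{lem:R-equation} its spectral radius satisfies $\rho(\rho-1)=ab$. No argument that only uses $\gamma\ge 0$ from Theorem~\ref{thm:FZZ} can detect this.

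\textbf{The missing idea is that you never need $s_B$ to be bounded.} Because its coefficient is negative, insert the trivial bound $\eta+s_B=b/2-r_B\le b/2$ into \eqref{eq:completion-inequality} combined with \eqref{eq:R-defect-expansion}. Drop the nonnegative $c$, $s_A$ and $\alpha$ terms, and note that the error $o(1)\,s_B$ is $o(\sqrt m)$. This gives
\[
 \gamma(H')\ \ge\ \frac{(\sqrt b-\sqrt a)^2}{2}+\bigl(2\sqrt{a/b}-1\bigr)\frac b2-o(\sqrt m)\ =\ \frac a2-o(\sqrt m).
\]
Since $a=\Theta(\sqrt m)$, this contradicts $\gamma(H')\le 1+o(1)$. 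This is exactly how the paper handles $b/a\ge 4$: the worst case is the complete loss of the $B$-matching, and the main terms then collapse to $a/2$.
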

\begin{proof}
By the definition of $\widehat G$, one sees that $\widehat G\cong R(a,b;p,q)$ with $\alpha=\frac a2-p\in\{0,1/2\}$ and $\eta=\frac b2-q\in\{0,1/2\}$. Denote by $r:=b/a$, by Lemma~\ref{lem:core-dichotomy}(ii), $1\leq r<100.$ If $1\le r<4$, then both $2\sqrt{b/a}-1$ and $2\sqrt{a/b}-1$ are nonnegative. By \eqref{eq:completion-inequality} and \eqref{eq:R-defect-expansion}, 
\begin{align}\label{eq3.014}
 \gamma(H')\geq
 \frac{(\sqrt a-\sqrt b)^2}{2}+O((ab)^{-1/2})=\frac a2(1+r-2\sqrt r)+O((ab)^{-1/2}).
\end{align}
Since $a=\Theta(\sqrt m)$ is sufficiently large, and by \eqref{eq:defect-one} and \eqref{eq3.026}, $\gamma(H')\le1$, from \eqref{eq3.014} one gets $r=1+o(1)$.

If $r\geq4$, then $2\sqrt{b/a}-1$ is positive, but $2\sqrt{a/b}-1$ is nonpositive. By \eqref{eq:completion-inequality} and \eqref{eq:R-defect-expansion}, 
\begin{align}\notag
 \gamma(H')&\geq
 \frac{(\sqrt a-\sqrt b)^2}{2}+(2\sqrt{\frac{a}{b}}-1)\eta+O((ab)^{-1/2})+
 \bigl(2\sqrt{\frac{a}{b}}-1-o(1)\bigr)s_B\\ \notag
 &\geq\frac a2(1+r-2\sqrt r)+\sqrt{\frac{1}{r}}-\frac{1}{2}
 +a\left(\sqrt{r}-\frac{r}{2}+o(1)\right) \\ \notag
 &\geq \frac{a-1}{2},
\end{align}
a contradiction to $a=\Theta(\sqrt m)$ being sufficiently and $\gamma(H')\le1$. Therefore, $r=1+o(1)$.
\end{proof}
\begin{lemma}%[Discrete defect lemma]
\label{lem:discrete-defect}
If $t=0$, then $c+s_A+s_B\le1,$ and the only possibilities for $G$ are:
\begin{wst}%[label=\textup{(\roman*)}]
 \item[{\rm (i)}] $G=E_{a,b}$;
 \item[{\rm (ii)}] $G=O_{e,o}$;
 \item[{\rm (iii)}] $G$ is obtained from $E_{a,b}$ by deleting one edge inside a part;
 \item[{\rm (iv)}] $G$ is obtained from $K_{a,b}$ by adding a matching of size $\frac{a-1}{2}$ in the $a$-part and a matching of size $\frac{b-1}{2}$ in the $b$-part, where both $a$ and $b$ are odd;
 \item[{\rm (v)}] $G$ is obtained from $E_{a,b}$ by deleting one edge between two parts.
\end{wst}
\end{lemma}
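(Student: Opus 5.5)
When $t=0$ we have $G=H'$, and the core $H'$ already has the structure of alternative (ii) of Lemma~\ref{lem:core-dichotomy}. The plan is to sandwich $\gamma(G)$ between the upper bound $\gamma(G)\le 1$ from \eqref{eq:defect-one} and the lower bound of the completion inequality, Lemma~\ref{lem:completion-inequality}. Once the total deficiency $c+s_A+s_B$ is shown to be at most $1$, the five cases follow by reading off the parities of $a$ and $b$ and which deficiency is present.

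The key observation is that $\widehat G\cong R(a,b;p,q)$ with $\alpha,\eta\in\{0,1/2\}$, where the half occurs exactly when the corresponding part is odd. Lemma~\ref{lem3.001} then gives
\[
\gamma(\widehat G)=\tfrac12(\sqrt a-\sqrt b)^2+(2\sqrt{b/a}-1)\alpha+(2\sqrt{a/b}-1)\eta+O(m^{-1/2})\ge -o(1).
\]
Here the first term is nonnegative. The coefficients of $\alpha$ and $\eta$ equal $1+o(1)$ by Lemma~\ref{lem3.011}, since $b/a=1+o(1)$.

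Substituting into \eqref{eq:completion-inequality}, and using that the coefficients of $s_A$ and $s_B$ there are also $1-o(1)$, yields
\[
1\ge\gamma(G)\ge \tfrac12(\sqrt a-\sqrt b)^2+(1-o(1))(\alpha+\eta+c+s_A+s_B)-o(1).
\]
Since $c,s_A,s_B$ are nonnegative integers and $\alpha+\eta\in\{0,1/2,1\}$, it follows that $c+s_A+s_B\le 1+o(1)$, hence $c+s_A+s_B\le1$ by integrality. More precisely, $\alpha+\eta+c+s_A+s_B\le1$, with a little slack that needs care. The care point is that the $o(1)$ errors must not allow the total to reach exactly $1+1/2$; with the integer/half-integer quantization this is automatic for large $m$.

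The case analysis then runs as follows.
\begin{itemize}
\item If $c+s_A+s_B=0$ and $a,b$ are both even, then $G=E_{a,b}$, which is case (i).
\item If $c+s_A+s_B=0$ and exactly one of $a,b$ is odd, then $G=O_{e,o}$, which is case (ii).
\item If $c+s_A+s_B=0$ and both are odd, then $\alpha+\eta=1$, which is case (iv).
\item If $c+s_A+s_B=1$, the bound $\alpha+\eta+c+s_A+s_B\le1$ would force $\alpha=\eta=0$, so both parts are even. Then $c=1$ gives case (v), and $s_A=1$ or $s_B=1$ gives case (iii).
\end{itemize}
In this last subcase we must also check that a missing matching edge leaves a matching of size one less than perfect, and that the structure inside each part is a matching. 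Both follow from $\Delta\le1$ in \eqref{eq:core-matchings}; that bound was proved for $H$, but since $H'=G[V']\supseteq H$ with $O(1)$ extra edges, the same $W_5$-freeness argument applies.

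The main obstacle is the borderline accounting. The combination $\alpha+\eta=1/2$ with $c+s_A+s_B=1$ gives a lower bound of about $3/2-o(1)$, which is excluded. However, $\alpha+\eta=1$ together with a deficiency of $1$, or the $(\sqrt a-\sqrt b)^2$ term, must be controlled using exact parity and not just asymptotic estimates.

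For this reason I would state the conclusion only as the list (i)--(v), which is a superset of what survives. The enumeration claimed in the lemma may therefore include possibilities (such as (iv) and (v)) that later quotient-polynomial comparisons eliminate. Accordingly, I only need $c+s_A+s_B\le1$ and do not need to exclude mixed cases like (iii) with an odd part. If $\alpha+\eta>0$ combined with a deficiency slips through the $o(1)$ margin, I would handle it by noting that the list in the lemma still covers it after allowing non-perfect maximum matchings, or by sharpening the error terms using the exact $O(m^{-1/2})$ bounds.
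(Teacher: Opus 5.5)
Your core argument is correct and is essentially the paper's. With $t=0$ you sandwich $\gamma(G)\le 1$ against \eqref{eq:completion-inequality}, use Lemma~\ref{lem3.011} to make every coefficient $1+o(1)$, and read off (i)--(v) from parities.

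The paper proceeds in two steps. It first uses $\gamma(\widehat G)\ge 0$ (Theorem~\ref{thm:FZZ}) to get $c+s_A+s_B\le 1$. It then invokes \eqref{eq:R-defect-expansion} separately when $\alpha+\eta\in\{1/2,1\}$, to force $c+s_A+s_B=0$ in those cases. You substitute the expansion at once and obtain the single half-integer bound $\alpha+\eta+c+s_A+s_B\le 1$, which does both steps together.

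Your remark that $\Delta(H'[A]),\Delta(H'[B])\le 1$ transfers from $H$ to $H'\supseteq H$ by the same common-neighbourhood argument is also right. It makes explicit something the paper uses tacitly when defining $s_A$, $s_B$ and $\widehat G$.

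You should delete your last two paragraphs, because they are mistaken. There is no borderline obstacle. The case $\alpha+\eta=1$ with a deficiency of $1$ gives total $2$, which is excluded a fortiori. The term $\tfrac12(\sqrt a-\sqrt b)^2\ge 0$ can only strengthen the lower bound.

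More importantly, the fallback ``I only need $c+s_A+s_B\le 1$ and do not need to exclude mixed cases'' would create a genuine gap. Take $a$ even, $b$ odd and $s_B=1$, i.e.\ $O_{e,o}$ with one matching edge removed. Then $c+s_A+s_B=1$, yet this graph is none of (i)--(v). Reinterpreting the list does not help either, since (iii) and (v) require both parts even. This graph is excluded only by your combined bound, where its total is $3/2$. That bound is therefore not optional slack; it is exactly what the lemma needs.
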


\begin{proof}
Since $t=0,$ $G=H'.$ By Lemma~\ref{lem3.011}, all three coefficients $1-o(1),\,2\sqrt{b/a}-1-o(1)$ and $2\sqrt{a/b}-1-o(1)$ on the right-hand side of \eqref{eq:completion-inequality} equal $1+o(1)$. By the construction of $\widehat G,$ one sees $\widehat G$ is $W_5$-free, and so $\gamma(\widehat G)\ge0$. 
Combining \eqref{eq:completion-inequality} and $\gamma(G)\le1$, one has 
$c+s_A+s_B\leq 1$.

Note that $\alpha=\frac a2-p\in\{0,1/2\}$ and $\eta=\frac b2-q\in\{0,1/2\}.$ If both $\alpha$ and $\eta$ are $0,$ then both $a$ and $b$ are even, and so $c+s_A+s_B\leq 1$ gives one of (i), (iii) and (v). If one of $\alpha$ and $\eta$ is $0,$ and the other is $\frac{1}{2}$, then formulas \eqref{eq:completion-inequality}, \eqref{eq:R-defect-expansion} and Lemma~\ref{lem3.011} together with $\gamma(G)\le1$ give $c+s_A+s_B= 0$, this gives (ii). If both $\alpha$ and $\eta$ are $\frac{1}{2}$, then formulas \eqref{eq:completion-inequality}, \eqref{eq:R-defect-expansion} and Lemma~\ref{lem3.011} together with $\gamma(G)\le1$ also give $c+s_A+s_B= 0$, this gives (iv). 
\end{proof}

%\section{Exact elimination of the two residual candidates}

%We first eliminate the odd--odd pattern.

For odd $a, b$, let $T_{a,b}$ be obtained from $K_{a,b}$ by inserting a maximum matching in each part.

\begin{lemma}
\label{lem:odd-odd-elimination}
$G$ is not isomorphic to $T_{a,b}$ for any odd $a,b$.
\end{lemma}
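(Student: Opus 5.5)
The plan is to show that $T_{a,b}$ is never extremal: for every odd $a\le b$ with $m=ab+\frac{a+b}2-1$ odd and large, I will exhibit a $W_5$-free competitor of size exactly $m$ with strictly larger spectral radius. Every comparison will be exact and algebraic, since first-order estimates are too weak here. $T_{a,b}=R(a,b;\frac{a-1}2,\frac{b-1}2)$, so $\alpha=\eta=\frac12$. By Lemma~\ref{lem:R-equation}, $\rho(T_{a,b})$ is the largest root of
\[
P(x):=x^2(x-1)^2-(ax-1)(bx-1),
\]
and every other root is at most $1$. Hence for any $\theta>1$, $P(\theta)>0$ implies $\theta>\rho(T_{a,b})$. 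Lemma~\ref{lem3.001} only gives $\gamma(T_{a,b})=1+O(m^{-1/2})$ when $b/a=1+o(1)$, so it cannot decide the comparison, which is why I use $P$ directly.

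\emph{Step 1: comparison with the book.} Put $\theta=\beta_m$ and $s=a+b$, and reduce $P(\theta)$ using $\theta^2=\theta+m-1$ and $ab=m+1-s/2$. This gives the linear expression
\[
P(\theta)=(m-1)\Bigl(\frac s2-2\Bigr)-\theta\Bigl(m+1-\frac{3s}2\Bigr)-1 .
\]
Write $d=b-a$ (even). Then $s$ is determined by $s^2+2s=4m+4+d^2$. For $d=0$, substituting $\theta=\beta_m$ shows $P(\theta)\approx-\sqrt m<0$, so the book does \emph{not} beat $T_{n,n}$. Since $\partial P/\partial s\approx m/2$ and $s$ grows by about $d^2/(4\sqrt m)$, $P(\theta)$ increases by roughly $d^2\sqrt m/8$. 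So for $d$ at least some absolute constant $d_0$ (I expect $d_0=4$), I expect $P(\beta_m)>0$, i.e.\ $\rho(T_{a,b})<\beta_m$, contradicting \eqref{eq3.1}. I will carry out this step with exact error terms, using $\beta_m=\sqrt{m-3/4}+\frac12$.

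\emph{Step 2: the case $a=b=n$.} Here $m=n^2+n-1$, and $E_{n-1,n+1}$ (both parts even) has exactly $(n-1)(n+1)+n=m$ edges. By Proposition~\ref{prop:candidate-radii}(i), $\rho(E_{n-1,n+1})=1+\sqrt{n^2-1}=:\theta'$, so $\theta'(\theta'-1)=\sqrt{n^2-1}\,(1+\sqrt{n^2-1})$. I will then evaluate $P(\theta')=\theta'^2(\theta'-1)^2-(n\theta'-1)^2$, which factors as a difference of squares. Its sign reduces to comparing $\theta'(\theta'-1)$ with $n\theta'-1$, which amounts to $\sqrt{n^2-1}>n-\frac{1}{1+\sqrt{n^2-1}}\cdot(\dots)$ after simplification. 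I expect this to give $P(\theta')>0$, so $G$ is beaten by $E_{n-1,n+1}$.

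\emph{Step 3: the remaining small $d$, which I expect to be the main obstacle.} This is the case $2\le d<d_0$, in practice $b=a+2$ (and possibly $a+4$). The natural same-size competitors obtained by shifting one vertex between the parts change the edge count by $\pm d$. So I will look for a competitor of size $m$ among $\cE_m\cup\cO_m\cup\cD_m$ with parts $(a\pm1,b\mp1)$, together with a compensating deleted or added matching edge, e.g.\ $D_{a+1,b+1}$ or $O_{a\pm1,\cdot}$, checking the factorization identities \eqref{eq:factorizations}. For each I will reduce the cubic $f_{e,o}$ or $g_{a,b}$ modulo $P$, or evaluate $P$ at the competitor's root through its defining cubic, to get a sign. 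If no such competitor exists for some residue, I will fall back to refining Step 1 to second order (keeping the exact $\theta$ rather than the asymptotic one) to see whether $P(\beta_m)>0$ already holds for $d=2$. This bookkeeping of exact polynomial identities at the threshold $d\approx2$ is where I expect the real difficulty.
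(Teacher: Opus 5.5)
Your Steps~1 and~2 are exactly the paper's argument. For $a=b=n$ the competitor is $E_{n-1,n+1}$: the first factor of your difference of squares is $\theta'(\theta'-1)-(n\theta'-1)=(n-1)(n-\sqrt{n^2-1})>0$, so $P(\theta')=(n-1)(2\sqrt{n^2-1}-n+1)>0$. For $a<b$ the competitor is the book $B_m$.

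What you are missing is a one-line parity observation, and that omission is what creates your Step~3. Write $c=\frac{a+b}2$ and $\delta=\frac{b-a}2$. Then $m=c^2-\delta^2+c-1$, and since $c^2+c$ is even, $m$ odd forces $\delta$ even, i.e.\ $b-a\equiv 0\pmod 4$. Concretely, $b=a+2$ gives $m=a^2+3a$, which is even. So the case you call the main obstacle never arises. As written, though, your proposal leaves it open, and your fallback could not have closed it. Your own first-order estimate gives $P(\beta_m)\approx-\sqrt m/2<0$ at $b-a=2$. Moreover, for even $m$ the book $B_m$ is not a graph at all, and the alternative competitor search is left unspecified.

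With the parity remark, Step~3 disappears and the only cases are $b=a$ and $b-a\ge4$, which is precisely the paper's split. At $b-a=4$ your heuristic margin is about $+\sqrt m$, so the exact computation will go through. A clean way to do it is the paper's identity. From $\beta_m^2-\beta_m=m-1=c^2-\delta^2+c-2$ you get $(c-\beta_m+1)(c+\beta_m)=\delta^2+2$ and $P(\beta_m)=\beta_m\bigl((\beta_m+2)(c-\beta_m+1)-2\bigr)-1$. Here $c-\beta_m+1=\sqrt{(\beta_m-\frac12)^2+\delta^2+2}-(\beta_m-\frac12)$ is increasing in $\delta$. Already at $\delta=2$ this gives $(\beta_m+2)(c-\beta_m+1)>3$. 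Hence the bracket exceeds $1$ and $P(\beta_m)>\beta_m-1>0$ for every even $\delta\ge2$.
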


\begin{proof}
Suppose to the contrary that $G$ is isomorphic to $T_{a,b}$ for some odd $a,b$. Without loss of generality, we may assume $a\leq b.$ Write
\[
 c=\frac{a+b}{2},\qquad d=\frac{b-a}{2}.
\]
Since
$e(T_{a,b})=c^2-d^2+c-1$ is odd, $d$ is even.

If $d=0$, then $a=b$, and the graph $E_{a-1,a+1}$ has the same number of edges as that of $T_{a,a}$. By Proposition~\ref{prop:candidate-radii}(i), $\rho(E_{a-1,a+1})=1+\sqrt{a^2-1}$. On the other hand, by Lemma~\ref{lem:R-equation}, $\rho(T_{a,a})$ is the largest zero of $x^2(x-1)^2-(ax-1)^2=0.$ Substitution of $x=1+\sqrt{a^2-1}$ into $x^2(x-1)^2-(ax-1)^2$ gives
$(a-1) \left(2 \sqrt{a^2-1}-a+1\right)>0$, so
$\rho(E_{a-1,a+1})>\rho(T_{a,a})=\rho(G)$, a contradiction to the choice of $G$.

If $d\ge2$, then $m=e(T_{a,b})=c^2-d^2+c-1$. By \eqref{eq:book-defect} and Lemma~\ref{lem:R-equation}, $\beta_m$ is the largest zero of $x(x-1)=m-1,$ and $\rho(T_{a,b})$ is the largest zero of $x^2(x-1)^2=(ax-1)(bx-1).$ Substitution of $x=\beta_m$ into $f(x)=x^2(x-1)^2-(ax-1)(bx-1)$ gives 
\begin{align}\notag
f(\beta_m)&=\beta_m^2(\beta_m-1)^2-(a\beta_m-1)(b\beta_m-1)\\ \notag 
&=\beta_m^2(\beta_m-1)^2-((c-d)\beta_m-1)((c+d)\beta_m-1)\\ \label{eq3.0020}
&=\beta_m^2\left((\beta_m-1)^2-c^2+d^2\right)+2c\beta_m-1.
\end{align}
Note that 
\begin{align}\label{eq3.0021}
\beta_m^2-\beta_m=m-1=e(T_{a,b})-1=c^2-d^2+c-2,
\end{align}
one gets 
$
(\beta_m-1)^2-c^2+d^2=-\beta_m+c-1.
$
Now \eqref{eq3.0020} gives 
\begin{align}\label{eq3.0023}
f(\beta_m)=\beta_m^2(-\beta_m+c-1)+2c\beta_m-1=\beta_m((\beta_m+2)(c-\beta_m+1)-2)-1. 
\end{align}
On the other hand, by \eqref{eq3.0021}, one has  
\begin{align}\notag
(c-\beta_m+1)(c+\beta_m)=d^2+2\geq 6,
\end{align}
Substitution of this into \eqref{eq3.0023} gives $f(\beta_m)>0,$ and so 
$\rho(T_{a,b})<\beta_m\leq \rho(G)$, a contradiction to the choice of $G$.
\end{proof}

Let $C_{a,b}$ be obtained from $E_{a,b}$ by deleting one edge between two parts.

\begin{lemma}
\label{lem:cross-deletion}
Let $4\le a\le b$ be even. Then
\(
 \rho(C_{a,b})<\rho(D_{a,b}).
\)
%For $a=b=2$ the two graphs are isomorphic.
\end{lemma}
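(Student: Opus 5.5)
The plan is an exact comparison of characteristic polynomials, in the same spirit as the proof of Lemma~\ref{lem:odd-odd-elimination}. A softer Rayleigh-quotient argument looks too weak here. That argument would move the deleted cross edge $uv$ back and delete a matching edge $ww'$ of the $b$-part instead, and it would need $x_ux_v>x_wx_{w'}$ for the Perron vector of $C_{a,b}$. But when $a=b$ the coordinates of $u$ and $v$ are only a relative $O(1/\rho)$ below the typical ones, and the matching edges carry a correction of the same order, so the sign is not decided at first order.

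First, I will set up an equitable partition of $C_{a,b}$. Let the deleted cross edge be $uv$ with $u\in A$ and $v\in B$, and let $u'$ and $v'$ be the matching partners of $u$ and $v$ in their parts. The six cells are $\{u\}$, $\{u'\}$, $A\setminus\{u,u'\}$, $\{v\}$, $\{v'\}$ and $B\setminus\{v,v'\}$. Each cell has constant row sums into every other cell: $u$ sees $u'$ and the $b-1$ vertices of $B\setminus\{v\}$; vertices of $A\setminus\{u,u'\}$ see one partner inside their cell and all of $B$; and symmetrically for the $B$-side. By Lemma~\ref{lem2.05}, $\rho(C_{a,b})$ is the largest root of the characteristic polynomial $P_{a,b}(x)$ of the resulting $6\times 6$ quotient matrix. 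I would compute $P_{a,b}$ explicitly and try to strip off factors coming from eigenvectors that are antisymmetric within the pairs; this should leave a quartic, or at worst a sextic, $p_{a,b}(x)$ that carries the Perron root.

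Second, I will show that $C_{a,b}$ has only one eigenvalue greater than $1$. The graph $C_{a,b}$ is $(K_{a,b}-uv)$ plus two matchings. Since $\lambda_2(K_{a,b}-e)\le 0$ (the second eigenvalue of $K_{a,b}$ is $0$ and deleting an edge from a complete bipartite graph gives a graph with a single positive eigenvalue), Lemma~\ref{lem2.1} gives $\lambda_2(C_{a,b})\le 1$. Hence $\rho(C_{a,b})$ is the unique root of $p_{a,b}$ exceeding $1$. Because $p_{a,b}$ is monic, $p_{a,b}(x)>0$ for every $x>1$ with $x>\rho(C_{a,b})$, so it suffices to show $p_{a,b}(\theta)>0$ at $\theta:=\rho(D_{a,b})>1$.

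Third, I will reduce $p_{a,b}(\theta)$ modulo $g_{a,b}$ from Proposition~\ref{prop:candidate-radii}(iii). Using $\theta^3=2\theta^2-(1-ab)\theta-2a$, the value becomes a quadratic $Q(\theta)=\kappa_2\theta^2+\kappa_1\theta+\kappa_0$ whose coefficients are polynomials in $a$ and $b$. I then insert the known location $\theta=\sqrt{ab}+1-O(a/\sqrt{ab})$, either from Lemma~\ref{lem3.001} with $(\alpha,\eta)=(0,1)$ or by checking signs of $g_{a,b}$ at two nearby points, and show $Q(\theta)>0$ for all even $4\le a\le b$.

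The main obstacle is this last sign check. The leading terms of $Q$ will probably cancel, as in \eqref{eq3.0020}--\eqref{eq3.0023}, so the positivity has to come from a carefully bracketed lower-order term. I would first try to rewrite $Q(\theta)$ as a product of $(\theta-\text{explicit expression})$ with a manifestly positive factor, mimicking the factorisation $(c-\beta_m+1)(c+\beta_m)=d^2+2$. If that fails, I would bound $\theta$ between $\sqrt{ab}+1-2a/\sqrt{ab}$ and $\sqrt{ab}+1$, and handle the small case $a=4$ separately by direct computation.
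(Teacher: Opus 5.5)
Your skeleton is the paper's: the six-cell equitable partition, reduction of the quotient polynomial modulo $g_{a,b}$, and a sign check of the quadratic remainder at $\theta=\rho(D_{a,b})$. However, two steps fail as written.

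\textbf{Step 2 is false.} $K_{a,b}-uv$ has two positive eigenvalues, because its biadjacency matrix $J-e_ue_v^{T}$ has rank $2$. It even contains an induced $P_4$, so $\lambda_2(K_{a,b}-uv)\ge(\sqrt5-1)/2$. Lemma~\ref{lem2.1} therefore only gives $\lambda_2(C_{a,b})<2$, and the bound $\lambda_2(C_{a,b})\le1$ is genuinely wrong. For $a=b$ the quotient polynomial splits as $\bigl(x^3-ax^2-(a+1)x+a+2\bigr)\bigl(x^3+(a-2)x^2-(a-1)x-(a-2)\bigr)$. The second factor equals $2-a<0$ at $x=1$ and $a+4>0$ at $x=2$, so $C_{a,a}$ has an eigenvalue in $(1,2)$. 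The repair is to use the threshold $2$, as the paper does; this suffices because $\theta>2$. Also, no further factors can be stripped from the $6\times 6$ quotient in general. You must work with the full sextic $q_{a,b}$ of \eqref{eq:cross-polynomial}.

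\textbf{Step 3 is the more serious gap.} It is the whole content of the lemma, and you leave it as a plan whose concrete version fails. The remainder is $R_{a,b}$ in \eqref{eq:remainder}. For $a=b$ its $a^4$ terms cancel, so $R_{a,a}(\theta)$ is only of order $a^2$, while $R_{a,a}'\approx 2a^3$ near $\theta$. Hence $\theta$ must be located to within about $1/(2a)$.

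Your localization misreads Lemma~\ref{lem3.001}: the correction is $a/\bigl((\sqrt{ab}+1)\sqrt{ab}\bigr)\approx 1/b$, not of order $a/\sqrt{ab}$. At your proposed lower endpoint $\sqrt{ab}+1-2a/\sqrt{ab}=a-1$ one gets $R_{a,a}(a-1)=-4a^3+19a^2-28a+12<0$, so monotonicity yields nothing. Moreover, any asymptotic bracket leaves all pairs with $a$ below some threshold and $b$ arbitrary. That is not a finite check like ``$a=4$ by direct computation''.

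The paper avoids all of this with an exact argument. It writes $R_{a,a+d}=R_{a,a}+dK_a$, where $K_a$ is positive and $R_{a,a}$ is increasing on $[a,\infty)$. It proves $R_{a,a}(\rho(D_{a,a}))>0$ from the identity $g_{a,a}-q_a^{*}=(a-2)\bigl(x^2-(a+1)x+1\bigr)$ together with $q_a^{*}(\tau)=a+1-\tau>0$ at the larger root $\tau$ of $x^2-(a+1)x+1$. It then uses $\rho(D_{a,b})\ge\rho(D_{a,a})\ge a$, which works uniformly for all even $4\le a\le b$. A sharp bracket would also work in the balanced case: $g_{a,a}(a+1-1/a)=-2+3/a+1/a^2-1/a^3<0$ and $R_{a,a}(a+1-1/a)=a^2-4a+8-10/a+4/a^2>0$.
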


\begin{proof}
For $C_{a,b}$, assume it is obtained from $E_{a,b}$ by deleting the edge $u_1v_1$, where $u_1,v_1$ are in the $a$-part and $b$-part of $E_{a,b}$, respectively. Let $u_2$ be the neighbor of $u_1$ in the $a$-part, and let $v_2$ be the neighbor of $v_1$ in the $b$-part. Partition $V(C_{a,b})$ into $\{u_1\}$, $\{u_2\}$, the set of remaining $a-2$ vertices in the $a$-part, $\{v_1\}$, $\{v_2\}$, and the set of remaining $b-2$ vertices in the $b$-part. The quotient matrix of $A(C_{a,b})$ corresponding to this equitable partition is 
\[
 Q_{a,b}=\begin{pmatrix}
 0&1&0&0&1&b-2\\
 1&0&0&1&1&b-2\\
 0&0&1&1&1&b-2\\
 0&1&a-2&0&1&0\\
 1&1&a-2&1&0&0\\
 1&1&a-2&0&0&1
 \end{pmatrix}.
\]
By a direct calculation, the characteristic polynomial of $Q_{a,b}$ is 
\begin{equation}\label{eq:cross-polynomial}
 q_{a,b}(x):=x^6-2x^5-abx^4+6x^3+(3ab-a-b-5)x^2-(a+b)x+4-ab;
\end{equation}
and by Lemma~\ref{lem2.05}, $\rho(C_{a,b})$ is the largest zero of $q_{a,b}(x)$. 

Recall that $\rho(D_{a,b})$ is the largest zero of $g_{a,b}(x)$ (see \eqref{eq:D-cubic}).
Reducing $q_{a,b}(x)$ modulo $g_{a,b}(x)$ gives
\begin{equation}
 R_{a,b}(x)=(2ab-5a-b+4)x^2+(-2a^2b+4ab+3a-b-4)x+4a^2-ab-8a+4.
\label{eq:remainder}
\end{equation}
Writing $b=a+d$, we have
\begin{equation}
 R_{a,a+d}(x)=R_{a,a}(x)+dK_a(x),
\label{eq:remainder-split}
\end{equation}
where
\(
 K_a(x)=(2a-1)x^2-(2a^2-4a+1)x-a.
\)
The polynomial $K_a(x)$ is positive and increasing for $x\ge a$, while
$R_{a,a}(x)$ is increasing for $x\ge a$.

If $a=b$, then $\rho(C_{a,a})$ is the largest zero of
\(
 q_a^{\ast}(x)=x^3-ax^2-(a+1)x+a+2,
\)
$\rho(D_{a,a})$ is the largest zero of 
\(
 g_{a,a}(x)=x^3-2x^2+(1-a^2)x+2a.
\)
Then direct reduction gives
\begin{equation}\label{eq3.0016}
 g_{a,a}(\rho(C_{a,a}))=(a-2)\bigl(\rho^2(C_{a,a})-(a+1)\rho(C_{a,a})+1\bigr).
\end{equation}
Let $\tau$ be the largest zero of $f_a(x)=x^2-(a+1)x+1,$ then $q_a^{\ast}(\tau)=a+1-\tau>0$, and so $\rho(C_{a,a})<\tau.$ Now \eqref{eq3.0016} gives $g_{a,a}(\rho(C_{a,a}))<0,$ and so $\rho(D_{a,a})>\rho(C_{a,a})$, which implies
$R_{a,a}(\rho(D_{a,a}))>0$.

Note that $a\leq b$, the graph $D_{a,a}$ is a subgraph of $D_{a,b}$, so
$\rho(D_{a,b})\ge\rho(D_{a,a})\ge a$. Equations
\eqref{eq:remainder}--\eqref{eq:remainder-split} now give
\begin{equation}\label{eq3.0017}
q_{a,b}(\rho(D_{a,b}))=R_{a,b}(\rho(D_{a,b}))>0. 
\end{equation}
Note that $C_{a,b}$ has only one eigenvalue greater than $2$, and $\rho(D_{a,b})>2$, \eqref{eq3.0017} implies $\rho(D_{a,b})>\rho(C_{a,b})$.
\end{proof}

By Lemma~\ref{lem:R-equation}, the spectral radius of the graph obtained by deleting an edge in the $a$-part is the largest zero of $g_{b,a}(x)=x^3-2x^2+(1-ab)x+2b$. Since $a\le b$, the largest zero of $g_{b,a}(x)$ is smaller than that of $g_{a,b}(x)$. Combining this observation with Lemmas~\ref{lem:discrete-defect}--\ref{lem:cross-deletion} proves the
following results.

\begin{proposition}
\label{prop:balanced-reduction}
If $t=0$, then
\(
 G\in\cE_m\cup\cO_m\cup\cD_m.
\)
\end{proposition}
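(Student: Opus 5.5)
We argue by elimination over the five alternatives of Lemma~\ref{lem:discrete-defect}. Assume $t=0$, so $G=H'$ and $G$ is one of the graphs (i)--(v) listed there. In each case we either recognize $G$ as a member of $\cE_m\cup\cO_m\cup\cD_m$ or show that some $W_5$-free graph with $m$ edges has strictly larger spectral radius. The latter contradicts the maximality of $\rho(G)$.

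Cases (i) and (ii) are immediate. In case (i), $G=E_{a,b}$ with $a\le b$ even and $e(G)=ab+(a+b)/2=m$, so $G\in\cE_m$. In case (ii), $G=O_{e,o}$ and $e(G)=eo+e/2+(o-1)/2=m$, so $G\in\cO_m$. Case (iv) is exactly $G\cong T_{a,b}$ with $a,b$ odd, and Lemma~\ref{lem:odd-odd-elimination} excludes it.

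In case (iii), $G$ is $E_{a,b}$ with one edge of a part-matching deleted, where $a\le b$ are even and $m=ab+(a+b)/2-1$.
\begin{itemize}
 \item If the deleted edge lies in the $b$-part, then $G=D_{a,b}\in\cD_m$.
 \item If it lies in the $a$-part, Lemma~\ref{lem:R-equation} shows that $\rho(G)$ is the largest zero of $g_{b,a}(x)=x^3-2x^2+(1-ab)x+2b$. The difference is $g_{b,a}(x)-g_{a,b}(x)=2(b-a)\ge0$, so $g_{b,a}$ is at least $g_{a,b}$ everywhere. Each cubic has a single root greater than $1$ and is increasing through it. Hence, when $a<b$, $g_{b,a}(\rho(D_{a,b}))=2(b-a)>0$, which gives $\rho(G)<\rho(D_{a,b})$. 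Since $D_{a,b}$ is $W_5$-free with $m$ edges, this contradicts the choice of $G$.
 \item If $a=b$, the two deletions give isomorphic graphs, so $G=D_{a,a}\in\cD_m$.
\end{itemize}

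In case (v), $G=C_{a,b}$ has $m=ab+(a+b)/2-1$ edges, the same as $D_{a,b}$. If $a\ge4$, Lemma~\ref{lem:cross-deletion} gives $\rho(C_{a,b})<\rho(D_{a,b})$, again a contradiction. The small case $a=2$ cannot arise, because Lemma~\ref{lem:core-dichotomy}(ii) gives $a=\Theta(\sqrt m)$, which is large.

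With all five cases settled, $G\in\cE_m\cup\cO_m\cup\cD_m$. The only step needing more than bookkeeping is the case (iii) comparison of $g_{b,a}$ with $g_{a,b}$. It uses that both cubics have exactly one root above $1$ and are increasing there, which Lemma~\ref{lem:R-equation} supplies. Everything else is a direct appeal to the earlier lemmas.
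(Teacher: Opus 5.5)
Your proof is correct and follows the paper's own argument: you eliminate the alternatives of Lemma~\ref{lem:discrete-defect}, using Lemma~\ref{lem:odd-odd-elimination} for (iv), Lemma~\ref{lem:cross-deletion} for (v), and the identity $g_{b,a}=g_{a,b}+2(b-a)$ for an $a$-part deletion in (iii). You are slightly more careful than the paper: it does not separate the case $a=b$ in (iii), where the two deletions give isomorphic graphs, and it does not explicitly check the hypothesis $a\ge4$ of Lemma~\ref{lem:cross-deletion}, both of which you handle.
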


%\section{The pendant alternative} We now consider the case $t=1$.

\begin{lemma}
\label{lem:pendant-rigidity}
If $t=1$, then
\(
 m=n^2+n+1
\)
for an even integer $n$, and $G\cong H_n$.
\end{lemma}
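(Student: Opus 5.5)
When $t=1$, I would write $G=H'+wz$, where $w\in V'$ and $z$ is a new vertex of degree one, so that $e(H')=m-1$. The plan is to show that $H'$ must be a zero-defect graph, i.e.\ $H'\cong E_n$ with $m-1=n^2+n$ and $n$ even, and then to pin down where the pendant vertex is attached.

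\textbf{Step 1 (the defect budget).} I would make \eqref{eq3.026} quantitative. Let $\rho=\rho(G)$ and let ${\bf x}$ be its unit Perron vector. The eigen-equation at $z$ gives $x_z=x_w/\rho=O(m^{-3/4})$. The Rayleigh quotient $\rho\le \rho(H')\|x_{V'}\|^2+2x_wx_z$ then gives $\rho-\rho(H')=O(m^{-3/2})$. Hence
\[
 \gamma(G)=\gamma(H')+1-O(m^{-1}).
\]
Because $\gamma(G)\le1$, this forces $\gamma(H')=O(m^{-1})$.

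\textbf{Step 2 (rigidity of $H'$).} I would apply Lemma~\ref{lem:completion-inequality} together with Lemmas~\ref{lem3.001} and \ref{lem3.011} to $H'$. Every coefficient in \eqref{eq:completion-inequality} is $1+o(1)$, and the parity terms $\alpha,\eta\in\{0,1/2\}$ contribute at least $(1+o(1))(\alpha+\eta)$. Since $\gamma(H')=o(1)$, all of these must vanish: $c=s_A=s_B=0$, $a,b$ are even, and $\tfrac12(\sqrt a-\sqrt b)^2=o(1)$. Thus $H'\cong E_{a,b}$ with $b-a=O(\sqrt a)\cdot o(1)$. The next point is more delicate. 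We have $\gamma(E_{a,b})=a+b)/2-\sqrt{ab}$ exactly, and for even $a<b$ this is at least $(b-a)^2/(8b)$. That is only $O(m^{-1/2})$, which is not yet small enough.

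To finish the rigidity, I would use the exact identity $\rho(E_{a,b})=1+\sqrt{ab}$ and compare with the book bound $\rho\ge\beta_m$ directly. With $s=\sqrt{ab}$, Step 1 gives $\rho^2-\rho\le (s+1)s+O(m^{-1})$. The requirement $\rho^2-\rho\ge m-1=ab+(a+b)/2$ then yields $(a+b)/2\le s+O(m^{-1})$. This forces $a=b=:n$, and so $m=n^2+n+1$ with $n$ even. I expect this second-order comparison, which must control the $O(m^{-1})$ error terms honestly, to be the main obstacle. If the crude Rayleigh estimate turns out to be too lossy, I would instead compare exact polynomials: the quotient-matrix characteristic polynomial of $E_{a,b}$ with a pendant edge (computed as in Proposition~\ref{prop:candidate-radii}) against $x^2-x-(m-1)$ at $x=\beta_m$.

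\textbf{Step 3 (location of the pendant).} Now $H'=E_n$, which is vertex-transitive. Every choice of $w$ therefore gives a graph isomorphic to $H_n$, and $G\cong H_n$. I would also need to check that $H_n$ is $W_5$-free. The neighborhood of $w$ is the opposite part together with $w$'s matching partner and $z$; the vertex $z$ is isolated in this neighborhood, so no $C_4$ arises, just as in Lemma~\ref{lem:R-equation}.
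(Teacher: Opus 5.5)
There is a genuine gap, in Step~1. The estimate $\rho-\rho(H')=O(m^{-3/2})$ is off by a factor $\sqrt m$. Since $x_w=\Theta(m^{-1/4})$ and $x_z=x_w/\rho$, the cross term in your Rayleigh bound is $2x_wx_z=2x_w^2/\rho=\Theta(m^{-1})$. This order is actually attained: test $A(G)$ on the Perron vector $\mathbf q_1$ of $H'$, extended by $(\mathbf q_1)_w/\rho(H')$ at $z$. So $\zeta=\rho-\rho(H')\asymp m^{-1}$, and the pendant gain
\[
\theta=(\rho^2-\rho)-(\rho(H')^2-\rho(H'))\approx 2\rho\zeta\approx 2(\mathbf q_1)_w^2\in\{1/a,\,1/b\}
\]
is of order $m^{-1/2}$. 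All you can conclude is $\gamma(H')\le\theta=O(m^{-1/2})$.

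This still suffices for the first half of Step~2 ($c=s_A=s_B=0$, $\alpha=\eta=0$, $H'\cong E_{a,b}$), which only needs $\gamma(H')=o(1)$. But the decisive inequality $(a+b)/2\le\sqrt{ab}+O(m^{-1})$ is not available, and with the correct error term your argument no longer forces $a=b$. Note also an internal inconsistency: if your $O(m^{-1})$ bound were true, your own lower bound $(b-a)^2/(8b)\ge 1/(2b)$ for $a<b$ would already finish, so it would not be ``not yet small enough.''

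With a budget of order $1/a$, the comparison becomes a matter of exact constants and arithmetic. Since $\gamma(E_{a,b})=\frac{(b-a)^2}{2(\sqrt a+\sqrt b)^2}\approx\frac{(b-a)^2}{8a}$, the case $b=a+2$ has defect about $\frac1{2a}$, below the gain $\approx\frac1a$. No inequality of this kind can exclude it.

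The paper closes this with two ingredients your proposal lacks.
\begin{enumerate}
\item A sharp gain bound $\theta\le(1+o(1))/a$. It comes from the identity $\rho=\bigl((\rho I-A(H'))^{-1}\bigr)_{vv}=\sum_i(\mathbf q_i)_v^2/(\rho-\mu_i)$, together with $(\mathbf q_1)_v^2\le 1/(2a)$ and $\mu_2\le1$.
\item The parity of $m$. Here $e(H')=m-1=ab+(a+b)/2$ is even, so writing $a=2r$, $b=2s$ gives $r+s$ even, hence $b\equiv a\pmod 4$. So if $a\ne b$ then $b-a\ge4$, and $\gamma(E_{a,b})\ge(2-o(1))/a>\theta$, a contradiction.
\end{enumerate}
Your proposal never uses that $m$ is odd at this stage, and without it the conclusion does not follow.

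Your Rayleigh route could supply the first ingredient. You would bound $\zeta\le 2x_wx_z-\rho(H')x_z^2\approx x_w^2/\rho$, then show $x_w^2\le(1+o(1))/(2a)$, e.g.\ via the eigenvector perturbation bound of Lemma~\ref{lem2.06}. The exact-polynomial fallback you mention is only named, not carried out, so it does not fill the gap as written.
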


\begin{proof}
By Lemma~\ref{lem:one-edge-leakage}, the unique edge outside $H'$ is a pendant edge. Let
\(
 \theta=\bigl(\rho^2-\rho\bigr)
 -\bigl(\rho(H')^2-\rho(H')\bigr),
\)
\eqref{eq:defect-gain-small} gives $\theta=O(m^{-1/2})$. Since
$\gamma(G)=\gamma(H')+1-\theta\le1$, we have
$0\le\gamma(H')\le\theta=o(1)$.

Now by \eqref{eq:completion-inequality} and Lemma~\ref{lem3.011}, $c+s_A+s_B=1;$ and by \eqref{eq:R-defect-expansion}, $\alpha=\eta=0.$ Hence $H'=E_{a,b}$ for even $a\le b$, and so 
\begin{equation}
 \rho(H')=1+\sqrt{ab},\qquad
 \gamma(H')=\frac{a+b}{2}-\sqrt{ab}
 =\frac{(\sqrt b-\sqrt a)^2}{2}.
\label{eq:E-defect}
\end{equation}

%We compare this defect with the gain from a pendant edge.  The spectrum of $E_{a,b}$ consists of $1+\sqrt{ab}$, $1-\sqrt{ab}$, and eigenvalues in $\{-1,1\}$.  If the pendant edge is attached at a vertex $v$ in the $a$-part, the squared principal spectral weight at $v$ is $1/(2a)$; in the $b$-part it is $1/(2b)$.  
Denote by $\rho=\rho(H')+\zeta$. Let $z$ be the pendant vertex of $G$, and let $v$ be the unique neighbor of $z.$ Now 
$$
A(G)=\left(
       \begin{array}{cc}
         A(H') & e_v \\
         e_v^T & 0 \\
       \end{array}
     \right),
$$
where $e_v$ is a $|V'|$-dimension vector with $(e_v)_v=1$ and $(e_v)_u=0$ for $u\neq v.$ Now 
$$
A(G){\bf x}=\left(
       \begin{array}{cc}
         A(H') & e_v \\
         e_v^T & 0 \\
       \end{array}
     \right)\left(
              \begin{array}{c}
                x_{V'} \\
                x_z \\
              \end{array}
            \right)=\rho\left(
              \begin{array}{c}
                x_{V'} \\
                x_z \\
              \end{array}
            \right)
$$
gives
$$
A(H')x_{V'}+x_ze_v=\rho x_{V'},\qquad e_v^Tx_{V'}=\rho x_z,
$$
and so
\begin{align}\label{eq3.0030}
 \rho=e_v^T(\rho I-A(H'))^{-1}e_v=\bigl((\rho I-A(H'))^{-1}\bigr)_{vv}.
\end{align}

Let ${\bf y}:={\bf q}_1,{\bf q}_2,\ldots,{\bf q}_{|V'|}$ be the standard orthogonal eigenvectors of $A(H')$ corresponding to the eigenvalues $\rho(H'):=\mu_1>\mu_2\geq\cdots\geq\mu_{|V'|}.$ The spectral decomposition theory gives
\begin{align}\label{eq3.0031}
\bigl((\rho I-A(H'))^{-1}\bigr)_{vv}=\sum_{i=1}^{|V'|}\frac{({\bf q}_i)_v^2}{\rho-\mu_i}.
\end{align}
Since $H'=E_{a,b}$, the entries of ${\bf q}_1$ corresponding to the vertices in the same part of $H'$ are the same. Together with $A(H'){\bf q}_1=\mu_1{\bf q}_1$ and $||{\bf q}_1||_2=1$, if $v$ is in the $a$-part (resp. $b$-part), then $({\bf q}_1)_v^2=\frac{1}{2a}$ (resp. $({\bf q}_1)_v^2=\frac{1}{2a}$). Now $a\leq b$ gives $({\bf q}_1)_v^2\leq\frac{1}{2a}.$ On the other hand, Lemma~\ref{lem:R-equation} implies $\mu_2\leq 1.$ Together with \eqref{eq3.0030} and \eqref{eq3.0031}, one has
$$
\rho=\sum_{i=1}^{|V'|}\frac{({\bf q}_i)_v^2}{\rho-\mu_i}
\leq\frac{1/(2a)}{\rho-\mu_1}+\frac{\sum_{i=2}^{|V'|}{({\bf q}_i)_v^2}}{\rho-1}
\leq\frac{1/(2a)}{\zeta}+\frac{1}{\rho-1},
$$
which gives
\[
 \zeta\le
 \frac{1/(2a)}{\rho-1/(\rho-1)}.
\]
Since $\rho(H')=1+\sqrt{ab}$ and $\rho=\rho(H')+\zeta$, one has $\rho=(1+o(1))\sqrt{ab},$ this yields
\begin{equation}
 \gamma(H')\le\theta=\bigl(\rho^2-\rho\bigr)
 -\bigl(\rho(H')^2-\rho(H')\bigr)\le\frac{1+o(1)}a.
\label{eq:pendant-gain}
\end{equation}

Recall that $e(H')=m-1$ is even, and both $a$ and $b$ are even. Writing $a=2r$ and $b=2s$, by $e(H')=ab+\frac{a+b}{2}=ab+r+s,$ one sees $s-r$ is even. Thus either $a=b$ or
$b-a\ge4$. In the latter case, \eqref{eq:E-defect} gives
\[
 \gamma(H')=\frac{(b-a)^2}{2(\sqrt a+\sqrt b)^2}
 \ge\frac{2-o(1)}a,
\]
contradicting \eqref{eq:pendant-gain}. Therefore $a=b:=n$, where $n$ is
even.  Hence $H=E_n$, $m=n^2+n+1$, and $G=H_n$.
\end{proof}

%The pendant graph is genuinely better than every core-contained candidate at its size.

\begin{lemma}
\label{lem:special-dominance}
Let $n$ be sufficiently large and even, and put $m=n^2+n+1$.  Every
graph in
\(
 \{B_m\}\cup\cE_m\cup\cO_m\cup\cD_m
\)
has spectral radius at most $n+1$, whereas
$\rho(H_n)>n+1$.
\end{lemma}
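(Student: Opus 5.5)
The plan is to compare every candidate with the threshold $n+1$ directly. Each comparison uses either a closed form, or the sign of the relevant polynomial at $x=n+1$. In the polynomial cases, Lemma~\ref{lem:R-equation} says the graph has only one eigenvalue greater than $1$. So its characteristic factor (a monic cubic) has exactly one root above $1$, and is positive to the right of that root. Hence, for $n+1>1$, a nonnegative value of the cubic at $n+1$ gives $\rho\le n+1$.

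\textbf{The pendant graph and the book.} The lower bound is immediate. $H_n$ is connected and properly contains $E_n=E_{n,n}$, and $\rho(E_n)=1+n$ by Proposition~\ref{prop:candidate-radii}(i). By Perron--Frobenius monotonicity, $\rho(H_n)>n+1$. For the book, $4m-3=4n^2+4n+1=(2n+1)^2$, so $\beta_m=n+1$ exactly.

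\textbf{The family $\cE_m$.} Let $E_{a,b}\in\cE_m$ and put $s=\sqrt{ab}$. From $ab+(a+b)/2=m$ and AM--GM we get $s^2+s\le n^2+n+1$. If $ab\ge n^2+1$, then $s^2+s\ge n^2+1+\sqrt{n^2+1}>n^2+n+1$, which is impossible. Hence $ab\le n^2$, and $\rho(E_{a,b})=1+s\le n+1$.

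\textbf{The family $\cO_m$.} Put $S=e+o$, which is odd. From $eo=m-(S-1)/2$ one computes
\[
 f_{e,o}(n+1)=(n+1)\Bigl(\tfrac{S-1}{2}-n-1\Bigr)+e .
\]
Next, $eo\le S^2/4$ gives $(S+1)^2\ge 4n^2+4n+7>(2n+1)^2$, so $S\ge 2n+1$.
\begin{itemize}
\item If $S\ge 2n+3$, the bracket is nonnegative, so $f_{e,o}(n+1)>0$.
\item If $S=2n+1$, then $eo=n^2+1$. So $e,o$ are the roots of $t^2-(2n+1)t+n^2+1$, whose discriminant $4n-3$ must be an odd square $k^2$. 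But then $n=(k^2+3)/4$, and $k^2\equiv1\pmod 8$ makes $n$ odd. This contradicts $n$ even, so the case cannot occur.
\end{itemize}
This parity step is the delicate point: without it the smaller-root choice of $e$ would give $\rho(O_{e,o})>n+1$.

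\textbf{The family $\cD_m$.} Put $S=a+b$, which is even. Here $ab=m+1-S/2$, and
\[
 g_{a,b}(n+1)=(n+1)\Bigl(\tfrac S2-n-2\Bigr)+2a .
\]
The bound $ab\le S^2/4$ gives $S\ge 2n+1$, hence $S\ge 2n+2$ by parity.
\begin{itemize}
\item If $S\ge 2n+4$, then $g_{a,b}(n+1)>0$.
\item If $S=2n+2$, then $ab=n^2+1$. The smaller part is $a=n+1-\sqrt{2n}$, so $g_{a,b}(n+1)=2a-(n+1)=n+1-2\sqrt{2n}>0$ for large $n$.
\end{itemize}
In both cases $\rho(D_{a,b})\le n+1$.

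Together these show that every graph in $\{B_m\}\cup\cE_m\cup\cO_m\cup\cD_m$ has spectral radius at most $n+1<\rho(H_n)$. The main obstacle is the boundary case of $\cO_m$, which is closed only by the parity of $n$.
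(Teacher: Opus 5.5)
Your proof is correct and is essentially the paper's argument: both evaluate the defining cubic (or $1+\sqrt{ab}$) at $x=n+1$ and use AM--GM together with parity, and your conditions $S\ge 2n+3$ for $\cO_m$ and $S\ge 2n+4$ for $\cD_m$ are exactly the paper's condition $ab\le n^2$. The only difference is cosmetic. Your boundary cases, $S=2n+1$ for $\cO_m$ and $S=2n+2$ for $\cD_m$, both force the product of the part sizes to be $n^2+1$, which is odd, while $eo$ and $ab$ are even; so these cases are vacuous outright, which is how the paper disposes of them, and your discriminant argument and direct evaluation, though valid, are unnecessary.
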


\begin{proof}
We have $\rho(B_m)=\beta_m=\frac{1+\sqrt{4m-3}}2=n+1$. Also $H_n$ contains $E_n$ as a proper subgraph, so Perron--Frobenius gives $\rho(H_n)>\rho(E_n)=n+1$.

It remains to bound the spectral radii of the graphs in $\cE_m\cup\cO_m\cup\cD_m.$ We give one calculation covering all of them. Use the notation \eqref{eq:alpha-eta}, 
%and put
%\[
% c=\frac{a+b}{2},\qquad d=\frac{b-a}{2},\qquad
% A=\alpha+\eta,\qquad w=c-n.
%\]
and let
\[
 L(x)=x^2(x-1)^2-(ax-2\alpha)(bx-2\eta),
\]
where $ab+\frac{a+b}{2}-(\alpha+\eta)=m.$
Then
\begin{align*}
 L(n+1)&=(n+1)^2n^2-(a(n+1)-2\alpha)(b(n+1)-2\eta)\\
 &=(n+1)^2n^2-ab(n+1)^2+2(a\eta+b\alpha)(n+1)-4\alpha\eta
\end{align*}
We consider the following three cases with respect to the values of $\alpha$ and $\eta$.

%The edge equation is equivalent to
%\begin{equation}
% w(2n+w+1)=d^2+A+1.
%\label{eq:special-edge-relation}
%\end{equation}

%Substitution of \eqref{eq:special-edge-relation} gives
%\begin{equation}
%\begin{split}
% L(n+1)={}&(n+1)^2(w-1)
% +A(n+1)(n+2w-1)\\
% &+2(n+1)d(\alpha-\eta)-4\alpha\eta.
%\end{split}
%\label{eq:special-sign}
%\end{equation}
\textbf{Case 1: $\alpha=\eta=0$.} In this case, $\rho(E_{a,b})$ is the largest zero of $L(x).$ On the other hand, 
$$
L(n+1)=(n+1)^2n^2-ab(n+1)^2=(n+1)^2(n^2-ab).
$$
Note that $ab+\frac{a+b}{2}=m=n^2+n+1.$ If $ab\geq n^2+1,$ then $\frac{a+b}{2}\leq n,$ and so $\sqrt{ab}\leq \frac{a+b}{2}\leq n,$ this contradicts to $ab\geq n^2+1.$ Hence $ab\leq n^2,$ and so $L(n+1)\geq 0.$ This gives $\rho(E_{a,b})\leq n+1.$

\textbf{Case 2: $\alpha=0,\,\eta=\frac{1}{2}$.} In this case, $a$ is even, $b$ is odd, and $\rho(O_{a,b})$ is the largest zero of $L(x).$ On the other hand, 
$$
L(n+1)=(n+1)^2n^2-ab(n+1)^2+a(n+1)=(n+1)(n^2(n+1)-ab(n+1)+a).
$$
Note that $ab+\frac{a+b-1}{2}=m=n^2+n+1.$ If $ab>n^2,$ then since both $a$ and $n$ are even, $ab\geq n^2+2.$ Now $\frac{a+b-1}{2}\leq n-1,$ and so $\sqrt{ab}\leq \frac{a+b}{2}\leq n-\frac{1}{2},$ this contradicts to $ab\geq n^2+2.$ Hence $ab\leq n^2,$ and so $L(n+1)\geq 0.$ This gives $\rho(O_{a,b})\leq n+1.$

\textbf{Case 3: $\alpha=0,\,\eta=1$.} In this case, both $a$ and $b$ are even, and $\rho(D_{a,b})$ is the largest zero of $L(x).$ On the other hand, 
$$
L(n+1)=(n+1)^2n^2-ab(n+1)^2+2a(n+1)=(n+1)(n^2(n+1)-ab(n+1)+2a).
$$
Note that $ab+\frac{a+b-2}{2}=m=n^2+n+1.$ If $ab>n^2,$ then since all of $a$, $b$ and $n$ are even, $ab\geq n^2+4.$ Now $\frac{a+b-2}{2}\leq n-3,$ and so $\sqrt{ab}\leq \frac{a+b}{2}\leq n-2,$ this contradicts to $ab\geq n^2+4.$ Hence $ab\leq n^2,$ and so $L(n+1)\geq 0.$ This gives $\rho(D_{a,b})\leq n+1.$

This completes the proof of this lemma.
\end{proof}

%\section{Proof of the main theorem}

\begin{proof}[\bf Proof of Theorem~\ref{thm:main}]
Let $G$ be a $W_5$-free graph of size $m$ with no isolated vertex having the largest spectral radius, where $m$ is odd and sufficiently. By the Perron-Frobenius theorem, the graph $G$ is connected. 

Apply Lemma~\ref{lem:book-core}, $G$ contains a connected $\varepsilon$-core $H$. If the alternative (i) of Lemma~\ref{lem:core-dichotomy} occurs for $H$, then
$\rho(H)=\beta(e(H))$. The final assertion of Lemma~\ref{lem:book-core} gives $H=G$, and
Lemma~\ref{lem:core-dichotomy} yields $G=B_m$.

We may therefore assume that the alternative (ii) of Lemma~\ref{lem:core-dichotomy} occurs for $H$. Let $t=e(G)-e(G[V(H)])$. By Lemma~\ref{lem:one-edge-leakage}, $t\le1$. If $t=0,$ then Proposition~\ref{prop:balanced-reduction} shows $G\in\cE_m\cup\cO_m\cup\cD_m.$ If $t=1$, then Lemma~\ref{lem:pendant-rigidity} shows $m=n^2+n+1$ for an even integer $n$, and $G= H_n$. 

Finally by Lemma~\ref{lem:special-dominance}, if $m=n^2+n+1$ for an even integer $n$, then $G= H_n$; otherwise, $G\in\{B_m\}\cup\cE_m\cup\cO_m\cup\cD_m.$ This completes the proof of this theorem.
\end{proof}

\section{Concluding remarks}

Theorem~\ref{thm:main} presents the graphs with maximum spectral radius among all $W_5$-free graphs of size $m$ with $m$ being odd and sufficiently large. Note that the candidate $B_m$ are well-defined for all odd $m,$ it is interesting to try to compare the spectral radius of $B_m$ with those of the graphs in $\cE_m\cup\cO_m\cup\cD_m.$

\begin{proposition}\label{prop:book-tests}
Let $m$ be sufficiently large and odd, and put $\theta=\beta_m$. Then
\begin{align*}
 \rho(O_{e,o})>\theta
 &\iff (m-1-eo)\theta+e-m+1<0,\\
 \rho(D_{a,b})>\theta
 &\iff (m-1-ab)\theta+2a-m+1<0.
\end{align*}
Equality is characterized by replacing both strict inequalities with
equalities. Also,
\[
 \rho(E_{a,b})>\theta\iff 1+\sqrt{ab}>\theta.
\]
\end{proposition}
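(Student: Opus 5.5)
The idea is to evaluate each candidate polynomial at $\theta=\beta_m$ and use $\theta^2-\theta=m-1$ to reduce it to a linear expression in $\theta$. The sign of that expression then decides the comparison, because each polynomial has a single root greater than $1$ and is positive to the right of it.

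By Proposition~\ref{prop:candidate-radii}, $\rho(O_{e,o})$ is the largest root of $f_{e,o}(x)=x^3-2x^2+(1-eo)x+e$. This cubic arises from \eqref{eq:R-equation} with $(\alpha,\eta)=(0,1/2)$ after cancelling a factor $x$. By Lemma~\ref{lem:R-equation}, $O_{e,o}$ has exactly one eigenvalue greater than $1$. One checks that the other two roots of $f_{e,o}$ are eigenvalues of the equitable quotient, hence at most $1$. Since $f_{e,o}$ is monic, we get $f_{e,o}(x)>0$ for $x>\rho(O_{e,o})$ and $f_{e,o}(x)<0$ on $(1,\rho(O_{e,o}))$. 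As $\theta>1$, this gives
\[
\rho(O_{e,o})>\theta\iff f_{e,o}(\theta)<0,
\]
with equality exactly when $f_{e,o}(\theta)=0$.

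Next we reduce. Write $x^3-2x^2+x=x(x-1)^2=x(x^2-x)-(x^2-x)$. At $\theta$ this equals $(m-1)\theta-(m-1)$, so
\[
f_{e,o}(\theta)=(m-1)\theta-(m-1)-eo\,\theta+e=(m-1-eo)\theta+e-m+1.
\]
This is exactly the stated test for $O_{e,o}$. The cubic $g_{a,b}$ for $D_{a,b}$, with $(\alpha,\eta)=(0,1)$, is handled the same way and gives $g_{a,b}(\theta)=(m-1-ab)\theta+2a-m+1$.

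For $E_{a,b}$ there is nothing to reduce: $\rho(E_{a,b})=1+\sqrt{ab}$ by Proposition~\ref{prop:candidate-radii}(i), so the stated equivalence is immediate.

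The only step needing care is the sign claim, namely that the cubic's other roots do not exceed $\theta$. I would justify this directly. The cubics are $x\cdot$ (the quartic \eqref{eq:R-equation} divided by $x$), and the roots of \eqref{eq:R-equation} are eigenvalues of the $4\times4$ equitable quotient, which by Lemma~\ref{lem2.05} are eigenvalues of the graph. By Lemma~\ref{lem:R-equation}, the graph has only one eigenvalue greater than $1$, so every root of the cubic other than $\rho$ is at most $1<\theta$.

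Alternatively, one can avoid the quotient: the derivative of $f_{e,o}$ is $3x^2-4x+1-eo$, which is positive for $x\ge\sqrt{eo}$. Since $\theta\approx\sqrt m\approx\sqrt{eo}$, one checks that $f_{e,o}$ is increasing on a neighbourhood containing both $\theta$ and the largest root.
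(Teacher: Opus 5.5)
Your proposal is correct and is essentially the paper's proof. Like the paper, you reduce $f_{e,o}(\theta)$ and $g_{a,b}(\theta)$ with $\theta^2-\theta=m-1$ to the displayed linear forms, and use Lemma~\ref{lem:R-equation} to see that each cubic has a unique root above $1$, so its sign at $\theta>1$ decides the comparison; your quotient-matrix remark supplies the detail the paper leaves implicit. One small correction: for $O_{e,o}$ and $D_{a,b}$ one has $\alpha=0$, so the genuine equitable partition has three nonempty classes and its characteristic polynomial is the cubic itself (the quartic in \eqref{eq:R-equation} is $x$ times the cubic), which is how your ``$4\times 4$ quotient'' and ``$x\cdot$(quartic divided by $x$)'' phrasing should be read.
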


\begin{proof}
Use $\theta^2-\theta=m-1$ to reduce
$f_{e,o}(\theta)$ and $g_{a,b}(\theta)$.  This gives the two displayed
linear expressions.  By Lemma~\ref{lem:R-equation}, each polynomial has
only one root greater than $1$, and it is increasing through that root.
The last assertion follows from Proposition~\ref{prop:candidate-radii}.
\end{proof}

All three families $\cE_m$, $\cO_m$, $\cD_m$ are necessary. For example, for even
$s\to\infty$, the graphs $E_{s,s+2}$, $O_{s,s-1}$, and $D_{s,s}$ have
odd sizes $s^2+3s+1$, $s^2-1$, and $s^2+s-1$, respectively. Direct
substitution in Proposition~\ref{prop:book-tests} shows that each has
spectral radius larger than $\rho(B_m)$ with $m=s^2+3s+1$, $s^2-1$, and $s^2+s-1$, respectively. %Thus the exact winner genuinely depends on the arithmetic of $m$. 
\section*{Statements and Declarations}

\textbf{Competing interests}
The authors declare that they have no competing interests.

\textbf{Data availability}
No data were generated or analysed during the current study.

\textbf{Funding}
Shuchao Li was supported by the National Natural Science Foundation of China (Grant Nos. 12571365, 12171190).

%\noindent\textbf{Author contributions}
%All authors contributed to the conception and development of the results, the writing of the manuscript, and the review of the final version.

\textbf{Use of AI tools}
During the preparation of this manuscript, the authors used Doubao for language polishing and structural organization. The authors reviewed and edited all content and take full responsibility for the final manuscript.

\end{document}